\title{\vspace{-0.7cm}Density theorems for bipartite graphs and related Ramsey-type results}
\author{
Jacob Fox\thanks{Department of Mathematics, Princeton, Princeton,
NJ. Email: {\tt jacobfox@math.princeton.edu}. Research supported by
an NSF Graduate Research Fellowship and a Princeton Centennial
Fellowship.} \and Benny Sudakov\thanks{Department of Mathematics,
Princeton, Princeton, NJ. Email: {\tt bsudakov@math.princeton.edu}.
Research supported in part by NSF CAREER award DMS-0546523, NSF
grant DMS-0355497 and by a USA-Israeli BSF grant.}}
\newenvironment{proof}
      {\medskip\noindent{\bf Proof.}\hspace{1mm}}
      {\hfill$\Box$\medskip}
\def\qed{\ifvmode\mbox{ }\else\unskip\fi\hskip 1em plus 10fill$\Box$}
\newtheorem{theorem}{Theorem}[section]
\newtheorem{lemma}[theorem]{Lemma}
\newtheorem{corollary}[theorem]{Corollary}
\newtheorem{definition}[theorem]{Definition}
\begin{document}
\date{}

\maketitle

\begin{abstract}
In this paper, we present several density-type theorems which show
how to find a copy of a sparse bipartite graph in a graph of
positive density. Our results imply several new bounds for classical
problems in graph Ramsey theory and improve and generalize earlier
results of various researchers. The proofs combine probabilistic
arguments with some combinatorial ideas. In addition, these
techniques can be used to study properties of graphs with a
forbidden induced subgraph, edge intersection patterns in
topological graphs, and to obtain several other Ramsey-type
statements.
\end{abstract}

\section{Background and Introduction}

Ramsey theory refers to a large body of deep results in mathematics
whose underlying philosophy is captured succinctly by the statement
that ``In a large system, complete disorder is impossible.'' This is
an area in which a great variety of techniques from many branches of
mathematics are used and whose results are important not only to
graph theory and combinatorics but also to logic, analysis, number
theory, and geometry. Since the publication of the seminal paper of
Ramsey in 1930, this subject has grown with increasing vitality, and
is currently among the most active areas in combinatorics.

For a graph $H$, the {\it Ramsey number} $r(H)$ is the least
positive integer $n$ such that every two-coloring of the edges of
complete graph $K_n$ on $n$ vertices, contains a monochromatic copy
of $H$. Ramsey's theorem states that $r(H)$ exists for every graph
$H$. A classical result of Erd\H{o}s and Szekeres~\cite{ErSz}, which
is a quantitative version of Ramsey's theorem, implies that $r(K_k)
\leq 2^{2k}$ for every positive integer $k$. Erd\H{o}s~\cite{Er3}
showed using probabilistic arguments that $r(K_k) > 2^{k/2}$ for $k
> 2$. Over the last sixty years, there has been several
improvements on these bounds (see, e.g., \cite{Co}). However,
despite efforts by various researchers, the constant factors in the
above exponents remain the same.

Determining or estimating Ramsey numbers is one of the central
problem in combinatorics, see the book {\it Ramsey theory}
\cite{GrRoSp} for details. Besides the complete graph, the next most
classical topic in this area concerns the Ramsey numbers of sparse
graphs, i.e., graphs with certain upper bound constraints on the
degrees of the vertices. The study of these Ramsey numbers was
initiated by Burr and Erd\H{o}s in 1975, and this topic has since
placed a  central role in graph Ramsey theory.

An {\it induced subgraph} is a subset of the vertices of a graph
together with all edges whose both endpoints are in this subset.
There are several results and conjectures which indicate that graphs
which do not contain a fixed induced subgraph are highly structured.
In particular, the most famous conjecture of this sort by Erd\H{o}s
and Hajnal \cite{ErHa} says that every graph $G$ on $n$ vertices
which does not contain a fixed induced subgraph $H$ has a clique or
independent set of size a power of $n$. This is in striking contrast
with the general case where one can not guarantee a clique or
independent set of size larger than logarithmic in the number of
vertices.

Results in Ramsey theory generally say that if a large enough
structure is partitioned into a small number of parts, then one of
the resulting parts will contain some desired substructure.
Sometimes, a stronger {\it density-type} result can be proved, which
shows that any dense subset of a large enough structure contains the
desired substructure. One famous example is Szemer\'edi's theorem,
which says that every subset of the positive integers of positive
upper density contains arbitrarily long arithmetic progressions. It
strengthens the earlier result of van der Waerden that every finite
partition of the positive integers contain arbitrarily long
arithmetic progressions, and has led to many deep and beautiful
results in various areas of mathematics, including the recent
spectacular result of Green and Tao that there are arbitrarily long
arithmetic progressions in primes.

It is easy to see that Ramsey's theorem has no density-type
analogue. Indeed, the complete bipartite graph with both parts of
size $n/2$ has $n^2/4$ edges, i.e., more than half the total
possible number of edges, and still does not contain a triangle.
However, for bipartite graphs, a density version exists as was shown
by K\"ovari, S\'os, and Tur\'an \cite{KoSoTu} in 1954.

In this paper, we present several density-type theorems which show
how to find a copy of a sparse bipartite graph in a graph of
positive density. Our results imply several new bounds for classical
problems in graph Ramsey theory and improve and generalize earlier
results of various researchers. The proofs combine probabilistic
arguments with some combinatorial ideas. In addition, these
techniques can be used to study edge intersection patterns in
topological graphs, make some progress towards the Erd\H{o}s-Hajnal
conjecture, and obtain several other Ramsey-type statements. In the
subsequent sections we present in full detail our theorems and
compare them with previously obtained results.

\subsection{Ramsey numbers and density-type theorems for bipartite graphs}\label{sub:Ramsey}

Estimating Ramsey numbers is one of the central (and difficult)
problems in modern combinatorics. Among the most interesting
questions in this area are the linear bounds for Ramsey numbers of
graphs with certain degree constraints. In 1975, Burr and Erd\H{o}s
\cite{BuEr} conjectured that, for each positive integer $\Delta$,
there is a constant $c(\Delta)$ such that every graph $H$ with $n$
vertices and maximum degree $\Delta$ satisfies $r(H) \leq
c(\Delta)n$. This conjecture was proved by Chvat\'al, R\"odl,
Szemer\'edi, and Trotter \cite{ChRoSzTr}. Their proof is a beautiful
illustration of the power of Szemer\'edi's regularity lemma
\cite{KoSi}. However, the use of this lemma makes an upper bound on
$c(\Delta)$ to grow as a tower of $2$s with height polynomial in
$\Delta$. Since then, the problem of determining the correct order
of magnitude of $c(\Delta)$ as a function of $\Delta$ has received
considerable attention from various researchers. Still using a
variant of the regularity lemma, Eaton \cite{Ea} showed that
$c(\Delta)< 2^{2^{c\Delta}}$ for some fixed $c$. A novel approach of
Graham, R\"odl, and Rucinski \cite{GrRoRu} gave the first linear
upper bound on Ramsey numbers of bounded degree graphs without using
any form of the regularity lemma. Their proof implies that
$c(\Delta)<2^{c\Delta\log^2 \Delta}$. (Here, and throughout the
paper, all logarithms are base $2$.)

The case of bipartite graphs with bounded degree was studied by
Graham, R\"odl, and Rucinski more thoroughly in \cite{GrRoRu1},
where they improved their upper bound, showing that $r(H) \leq
2^{\Delta\log \Delta +O(\Delta)}n$ for every bipartite graph $H$
with $n$ vertices and maximum degree $\Delta$. As they point out,
their proof does not give a stronger density-type result. In the
other direction, they proved that there is a positive constant $c$
such that, for every $\Delta \geq 2$ and $n \geq \Delta+1$, there is
a bipartite graph $H$ with $n$ vertices and maximum degree $\Delta$
satisfying $r(H) \geq 2^{c\Delta}n$. Closing the gaps between these
two bounds remained a challenging open problem. In this paper, we
solve this problem by showing that the correct order of magnitude of
the Ramsey number of bounded degree bipartite graphs is essentially
given by the lower bound. This follows from the following
density-type theorem.

\begin{theorem}\label{main}
Let $H$ be a bipartite graph with $n$ vertices and maximum degree
$\Delta \geq 1$. If $\epsilon>0$ and $G$ is a graph with $N \geq
32\Delta \epsilon^{-\Delta}n$ vertices and at least $\epsilon{N
\choose 2}$ edges, then $H$ is a subgraph of $G$.
\end{theorem}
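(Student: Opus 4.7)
Write the bipartition of $H$ as $A\sqcup B$ with $|A|+|B|=n$. The strategy is to use dependent random choice to locate a set $U\subseteq V(G)$ with $|U|\geq|A|$ in which every subset of size at most $\Delta$ has at least $n$ common neighbours in $G$; then $A$ embeds injectively into $U$ and $B$ can be completed greedily, since for each $b\in B$ the image of $N_H(b)$ lies in $U$ and has at most $\Delta$ elements, giving at least $n$ common neighbours in $G$ from which to pick $\phi(b)$ outside the at most $n-1$ vertices already used.

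To locate $U$, sample $v_1,\dots,v_\Delta\in V(G)$ independently and uniformly at random, and let $T:=\bigcap_{i=1}^{\Delta} N_G(v_i)$. By Jensen's inequality applied to $x\mapsto x^\Delta$,
\[
\mathbb{E}\,|T|\;=\;\sum_{v\in V(G)}\!\Bigl(\tfrac{d(v)}{N}\Bigr)^{\!\Delta}\;\geq\;N\Bigl(\tfrac{\bar d}{N}\Bigr)^{\!\Delta}\;\geq\;\tfrac12\,\epsilon^\Delta N\;\geq\;16\Delta n,
\]
using $\bar d\geq\epsilon(N-1)$, $N\geq 2\Delta$, and the hypothesis $N\geq 32\Delta\epsilon^{-\Delta}n$. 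Call a $\Delta$-subset $S\subseteq V(G)$ \emph{bad} if $|N_G(S)|<n$ and let $Y$ be the number of bad $\Delta$-subsets contained in $T$; each bad $S$ lies in $T$ with probability $(|N_G(S)|/N)^\Delta<(n/N)^\Delta$, so
\[
\mathbb{E}\,Y\;\leq\;\binom{N}{\Delta}\Bigl(\tfrac{n}{N}\Bigr)^{\!\Delta}.
\]
Pick an outcome of the random choice with $|T|-Y\geq|A|$, delete one vertex from each bad $\Delta$-subset inside $T$, and take the result as $U$. Then $|U|\geq|A|$ and every subset of $U$ of size at most $\Delta$ has at least $n$ common neighbours in $G$ (smaller subsets extend to $\Delta$-subsets of $U$ whose common neighbourhood is a subset of the smaller subset's common neighbourhood).

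The principal difficulty is closing the inequality $\mathbb{E}|T|-\mathbb{E}Y\geq|A|$. The trivial bound $\binom{N}{\Delta}(n/N)^\Delta=O((en/\Delta)^\Delta)$ is superlinear in $n$ and can dominate $16\Delta n$ as soon as $n$ is large compared to $\Delta$, so the direct dependent random choice by itself does not produce $U$. I expect the proof to incorporate a structural refinement—for instance, restricting the sample to a substructure of $G$ with high minimum degree, iterating the random choice, or using the structure of $G$ to count bad $\Delta$-subsets by much less than $\binom{N}{\Delta}$—and the factor $32\Delta$ in the hypothesis on $N$ is precisely the slack that such a refinement requires.
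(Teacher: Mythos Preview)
You have correctly diagnosed the obstruction: with the naive deletion strategy, the expected number of bad $\Delta$-sets in $T$ is of order $n^{\Delta}/\Delta!$, which swamps $\mathbb{E}|T|$ once $n$ is large. But your guesses about how to repair this---passing to a high-minimum-degree subgraph, iterating, or sharpening the count of bad sets---are not what the paper does, and there is no evident way to make them yield a linear-sized $U$ in which \emph{every} $\Delta$-set is good. The paper in fact remarks explicitly that insisting on this ``all subsets good'' property is what forces $|U|$ to be sublinear in earlier work.

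The paper's resolution is to abandon the requirement that every small subset of $U$ be good and instead prove two lemmas. The first (a refined dependent random choice) produces a set $A$ of \emph{linear} size $|A|\geq 2^{-1/\Delta}\epsilon^{\Delta}N$ in which all but a $(4\Delta)^{-\Delta}$ fraction of the $\Delta$-sets have at least $x=\frac{\epsilon^{\Delta}}{8\Delta}N$ common neighbours; the point is that one does not delete anything, one simply accepts a few bad $\Delta$-sets. The second lemma is the genuinely new ingredient: it shows that if $\mathcal{F}$ is the down-closed hypergraph on $A$ whose edges are the subsets contained in some good $\Delta$-set, and if $\mathcal{F}$ has more than a $1-(4\Delta)^{-\Delta}$ fraction of all $\Delta$-sets as edges, then one can still embed into $\mathcal{F}$ the ``neighbourhood hypergraph'' of one side of $H$. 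The embedding is built vertex by vertex, maintaining the invariant that every partially-seen neighbourhood $f(N_H(b)\cap L_i)$ lies in few bad $\Delta$-sets; a short counting argument shows that at each step at most $|A|/(4\Delta)$ vertices are forbidden per active constraint, and there are at most $\Delta$ active constraints, so at least $|A|/2$ choices remain. Once one side of $H$ is placed inside $A$, the other side is filled in greedily exactly as you describe, using the $x\geq 2n$ common neighbours guaranteed for each good $\Delta$-set.

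So the missing idea is not a better bound on $\mathbb{E}Y$ but a replacement for the greedy embedding that tolerates a small fraction of bad $\Delta$-sets.
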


Taking $\epsilon=1/2$ together with the majority color in a
$2$-coloring of the edges of $K_N$, we obtain a corollary which
gives a best possible upper bound up to the constant factor in the
exponent on Ramsey numbers of bounded degree bipartite graphs.

\begin{corollary}\label{cormain}
If $H$ is bipartite, has $n$ vertices and maximum degree $\Delta
\geq 1$, then $r(H) \leq \Delta2^{\Delta+5}n$.
\end{corollary}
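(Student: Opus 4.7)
The plan is to derive the corollary directly from Theorem~\ref{main} by applying it to the majority color class of a $2$-coloring of $K_N$.

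First I would set $N = \Delta 2^{\Delta+5} n$ and consider an arbitrary $2$-edge-coloring of $K_N$ with colors red and blue. By pigeonhole, one of the color classes, say red, contains at least $\frac{1}{2}\binom{N}{2}$ edges. Let $G$ be the spanning subgraph of $K_N$ consisting of the red edges; then $G$ has $N$ vertices and at least $\epsilon\binom{N}{2}$ edges for $\epsilon = 1/2$.

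Next I would verify that the hypothesis of Theorem~\ref{main} is satisfied with this choice of $\epsilon$. With $\epsilon = 1/2$ we have
\[
32\Delta\,\epsilon^{-\Delta} n \;=\; 32\Delta \cdot 2^{\Delta} n \;=\; \Delta\,2^{\Delta+5} n \;=\; N,
\]
so indeed $N \geq 32\Delta\,\epsilon^{-\Delta} n$. Theorem~\ref{main} therefore guarantees that the bipartite graph $H$ appears as a subgraph of $G$, i.e.\ as a monochromatic (red) copy of $H$ in the original coloring. Since the coloring was arbitrary, this proves $r(H) \le N = \Delta 2^{\Delta+5} n$.

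There is no real obstacle here: the entire content of the corollary is in Theorem~\ref{main}, and the only nontrivial step is the arithmetic check that plugging $\epsilon=1/2$ into the density threshold reproduces the claimed Ramsey bound.
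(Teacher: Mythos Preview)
Your proposal is correct and follows exactly the paper's approach: take $\epsilon=1/2$, pass to the majority color class in a $2$-coloring of $K_N$, and apply Theorem~\ref{main}. The arithmetic check $32\Delta\cdot 2^{\Delta}n=\Delta 2^{\Delta+5}n$ is the only thing to verify, and you have done it.
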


Moreover, the above theorem also easily gives an upper bound on
multicolor Ramsey numbers of bipartite graphs. The $k$-color Ramsey
number $r(H_1,\ldots,H_k)$ is the least positive integer $N$ such
that for every $k$-coloring of the edges of the complete graph
$K_N$, there is a monochromatic copy of $H_i$ in color $i$ for some
$1 \leq i \leq k$. Taking $\epsilon=1/k$ in Theorem \ref{main} and
considering the majority color in a $k$-coloring of the edges of a
complete graph shows that for bipartite graphs $H_1,\ldots,H_k$ each
with $n$ vertices and maximum degree at most $\Delta$,
$r(H_1,\ldots,H_k) \leq 32\Delta k^{\Delta}n$.

One family of bipartite graphs that have received particular
attention are the $d$-cubes. The $d$-cube $Q_d$ is the $d$-regular
graph with $2^{d}$ vertices whose vertex set is $\{0,1\}^d$ and two
vertices are adjacent if they differ in exactly one coordinate. Burr
and Erd\H{o}s conjectured that $r(Q_d)$ is linear in the number of
vertices of the $d$-cube. Beck \cite{Be} proved that $r(Q_d) \leq
2^{cd^2}$. The bound of Graham et al. \cite{GrRoRu} gives the
improvement $r(Q_d) \leq 8(16d)^{d}$. Shi \cite{Sh}, using ideas of
Kostochka and R\"odl \cite{KoRo}, proved that $r(Q_d) \leq
2^{(\frac{3+\sqrt{5}}{2})d+o(d)}$, which is a polynomial bound in
the number of vertices with exponent $\frac{3+\sqrt{5}}{2} \approx
2.618$. A very special case of Corollary \ref{cormain}, when
$H=Q_d$, gives immediately the following improved result.

\begin{corollary}\label{cubecor}
For every positive integer $d$, $r(Q_d) \leq d2^{2d+5}$.
\end{corollary}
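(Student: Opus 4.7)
The plan is to derive this as an immediate consequence of Corollary \ref{cormain}. First I would verify that the $d$-cube $Q_d$ fits the hypotheses of that corollary: it is bipartite (partition the vertex set $\{0,1\}^d$ according to the parity of the coordinate sum; every edge flips exactly one coordinate and hence changes the parity), it has $n = 2^d$ vertices, and it is $d$-regular so its maximum degree is $\Delta = d$.

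Next I would simply substitute these parameters into the bound $r(H) \leq \Delta 2^{\Delta+5} n$ supplied by Corollary \ref{cormain}, obtaining
\[
r(Q_d) \leq d \cdot 2^{d+5} \cdot 2^d = d \cdot 2^{2d+5},
\]
which is exactly the claimed bound.

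There is essentially no obstacle here: the whole content of the corollary is packaged in Corollary \ref{cormain}, and the only verification required is the bipartiteness and degree of $Q_d$, both of which are standard. The only mildly non-trivial observation is that $n = 2^d$ combines multiplicatively with the $2^{\Delta+5} = 2^{d+5}$ factor to produce the exponent $2d+5$ in the final estimate.
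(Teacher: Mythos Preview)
Your proposal is correct and matches the paper's approach exactly: the paper states that Corollary~\ref{cubecor} is ``a very special case of Corollary~\ref{cormain}, when $H=Q_d$,'' and your substitution $n=2^d$, $\Delta=d$ into $r(H)\le \Delta 2^{\Delta+5}n$ is precisely what is intended.
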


A graph is {\it $d$-degenerate} if every subgraph of it has a vertex
of degree at most $d$. Notice that graphs with maximum degree $d$
are $d$-degenerate. This notion nicely captures the concept of
sparse graphs as every $t$-vertex subgraph of a $d$-degenerate graph
has at most $td$ edges. (Indeed, remove from the subgraph a vertex
of minimum degree, and repeat this process in the remaining
subgraph.) Burr and Erd\H{o}s \cite{BuEr} conjectured that, for each
positive integer $d$, there is a constant $c(d)$ such that $r(H)
\leq c(d)n$ for every $d$-degenerate graph $H$ on $n$ vertices. This
well-known and difficult conjecture is a substantial generalization
of the above mentioned results on Ramsey numbers of bounded degree
graphs and progress on this problem was made only recently.

Kostochka and R\"odl \cite{KoRo1} were the first to prove a
polynomial upper bound on the Ramsey numbers of $d$-degenerate
graphs. They showed that $r(H) \leq c_dn^2$ for every $d$-degenerate
graph $H$ with $n$ vertices. A nearly linear bound of the form $r(H)
\leq c_dn^{1+\epsilon}$ for any fixed $\epsilon>0$ was obtained in
\cite{KoSu}. For bipartite $H$, Kostochka and R\"odl proved that
$r(H) \leq d^{d+o(d)}\Delta n$, where $\Delta$ is the maximum degree
of $H$. Kostochka and Sudakov \cite{KoSu} proved that $r(H) \leq
2^{O(\log^{2/3}n)}n$ for every $d$-degenerate bipartite graph $H$
with $n$ vertices and constant $d$. Here we improve on both of these
results.

\begin{theorem}\label{degenerate}
If $d/n \leq \delta \leq 1$, $H$ is a $d$-degenerate bipartite graph
with $n$ vertices and maximum degree $\Delta \geq 1$, $G$ is a graph
with $N$ vertices and at least $\epsilon {N \choose 2}$ edges, and
$N \geq 2^{12}\epsilon^{-(1/\delta +3)d-2}\Delta^{\delta}n$, then
$H$ is a subgraph of $G$.
\end{theorem}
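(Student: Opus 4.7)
The plan is to adapt the dependent random choice approach from Theorem \ref{main} to handle the $d$-degenerate bipartite case, using degeneracy to replace the effective maximum degree by $\Delta^{\delta}$. Since $H$ has $n$ vertices and is $d$-degenerate, $|E(H)|\leq dn$; in particular, fewer than $2dn/\Delta^{\delta}$ vertices of $H$ have degree exceeding $\Delta^{\delta}$. Fix a degeneracy ordering $v_1,\ldots,v_n$ of $V(H)$ in which each $v_i$ has at most $d$ backward neighbors.

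The core step is a dependent random choice with a carefully chosen parameter. Pick a random $t$-subset $T\subseteq V(G)$ with $t\approx (1/\delta+3)d$, and set $W=\bigcap_{v\in T}N_G(v)$. A standard averaging argument yields $\mathbb{E}|W|\gtrsim \epsilon^{t}N$, while the expected number of $d$-subsets of $W$ with fewer than $m$ common neighbors is at most $\binom{N}{d}(m/N)^{t}$. The choice $t\approx d/\delta$ is exactly what makes this latter count negligible compared to $\mathbb{E}|W|$ when $m$ is set to a suitable threshold; after discarding the bad subsets, one obtains $W$ in which every $d$-subset has at least $m$ common neighbors in $V(G)$. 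A secondary cleanup---keeping only vertices $u\in W$ whose neighborhood into $W$ itself is large---then ensures that for every such $d$-subset $S\subseteq W$, $|N_G(S)\cap W|$ is also large, at the cost of the extra $\epsilon^{-2}$ factor.

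Then embed $H$ into $V(G)$ greedily in the degeneracy order, maintaining the invariant that the image of any vertex with a forward neighbor in $H$ lies in $W$. When $v_i$ is processed, its $\leq d$ already-placed backward neighbors give a $d$-subset $S_i\subseteq W$, so $|N_G(S_i)\cap W|$ is at least some appropriate size, and we pick $f(v_i)$ from this set avoiding the at most $n$ previously used images. The accounting splits $V(H)$ into heavy vertices (degree $>\Delta^{\delta}$), of which there are at most $2dn/\Delta^{\delta}$, and light vertices (forward degree $\leq \Delta^{\delta}$): the light vertices contribute only $\Delta^{\delta}n$ to a cumulative ``blocking'' count of $W$-vertices across the embedding, while the heavy vertices are few enough to be absorbed by the $\Delta^{\delta}n$ slack built into $|W|$.

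The main obstacle is the simultaneous parameter balancing: $W$ must be of size $\gtrsim \Delta^{\delta}n$, every $d$-subset of $W$ must have $|N_G(S)\cap W|\gtrsim n$, and all of this must fit under $N\leq 2^{12}\epsilon^{-(1/\delta+3)d-2}\Delta^{\delta}n$. The exponent $(1/\delta+3)d$ of $\epsilon^{-1}$ arises precisely from the choice $t\approx d/\delta$ in the dependent random choice, where one is balancing the shrinkage of $|W|$ (like $\epsilon^{t}$) against the number of bad $d$-subsets (like $(m/N)^{t}\binom{N}{d}$). Pinning down the correct $t$ and threshold $m$ to make the linear-in-$n$ bound go through, rather than something polynomial in $n$, is where the bulk of the careful work resides.
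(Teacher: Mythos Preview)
Your outline has real gaps in two places, and it misidentifies where the $\Delta^{\delta}$ factor comes from.

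First, you cannot ``discard bad subsets'' to arrange that \emph{every} $d$-subset of $W$ has at least $m$ common neighbors while keeping everything linear in $N$. To make the expected number of bad $d$-sets less than $1$ with $t\approx d/\delta$ one needs $m\lesssim N^{1-\delta}$; requiring $m\geq n$ then forces $N\gtrsim n^{1/(1-\delta)}$, which is superlinear. This is precisely the barrier the paper is built to avoid: one keeps a set in which only \emph{almost all} $d$-subsets are good (bad fraction below $(2\Delta)^{-d}$), and then uses a good-set embedding lemma (Lemma~\ref{degeneracyhelpful}) rather than a naive greedy embedding. In that lemma $\Delta$ enters because, when placing $v_h$, one must keep $f(N^{-}(v_j)\cap L_{h})$ good for each of the at most $\Delta$ forward neighbors $v_j$ of $v_h$; the forbidden vertices number at most $\Delta\cdot\frac{x}{2\Delta}=x/2$, which is why the bad fraction must be $(2\Delta)^{-d}$. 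Your ``secondary cleanup'' does not do what you claim either: deleting vertices of small degree into $W$ gives each surviving vertex a large neighborhood in $W$, but says nothing about common neighborhoods of $d$-sets in $W$.

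Second, the $\Delta^{\delta}$ factor in the bound on $N$ does not arise from a heavy/light vertex split of $H$. It comes from tuning the target common-neighborhood size to $x\approx \epsilon^{c}\Delta^{-\delta}N$: since $t-d=d/\delta$, the factor $(x/N)^{t-d}$ in the bad-set count contributes exactly $\Delta^{-d}$, matching the $(2\Delta)^{-d}$ needed by Lemma~\ref{degeneracyhelpful}; the requirement $x\geq 4n$ then reads $N\gtrsim \epsilon^{-c}\Delta^{\delta}n$. What the paper actually does to get common neighborhoods \emph{on both sides} is a second random selection, not a degree cleanup: after finding $A'\subset V_1$ via Lemma~\ref{lem:dependent}, pick a random $t$-subset $S\subset A'$, set $A_2=N(S)\subset V_2$ and $A_1=A'\setminus S$, and show that with positive probability almost all $d$-sets in each $A_i$ have at least $x$ common neighbors in $A_{3-i}$. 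Lemma~\ref{degeneracyhelpful} then consumes this bipartite pair. Your single-set $W$ plus heavy/light accounting does not produce either the two-sided ``almost all good'' structure or the $(2\Delta)^{-d}$ threshold that the embedding step requires.
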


For $\delta$ and $H$ as in the above theorem, taking $\epsilon=1/2$
and considering the majority color in a $2$-coloring of the edges of
$K_N$ shows that
$$r(H) \leq 2^{\delta^{-1}d+3d+14}\Delta^{\delta}n.$$ This new upper
bound on Ramsey numbers for bipartite graphs is quite versatile.
Taking $\delta=1$, we have $r(H) \leq 2^{4d+14}\Delta n$ for
bipartite $d$-degenerate graphs with $n$ vertices and maximum degree
$\Delta$. This improves upon the bound of Kostochka and R\"odl. If
$\Delta \geq 2^{d}$, then taking $\delta=(\frac{d}{\log
\Delta})^{1/2}$, we have $$r(H) \leq 2^{2\sqrt{d\log
\Delta}+3d+14}n$$ for bipartite $d$-degenerate graphs $H$ with $n$
vertices and maximum degree $\Delta$. In particular, we have $r(H)
\leq 2^{O(\log^{1/2} n)}n$ for constant $d$. This improves on the
bound of Kostochka and Sudakov, and is another step closer to the
Burr-Erd\H{o}s conjecture.

Moreover, as long as $\Delta$ is at most exponential in $d$, we
still have $r(H) \leq 2^{O(d)}n$. This has interesting applications
to another notion of sparseness introduced by Chen and Schelp
\cite{ChSc}. A graph is {\it $p$-arrangeable} if there is an
ordering $v_1,\ldots,v_n$ of the vertices such that for any vertex
$v_i$, its neighbors to the right of $v_i$ have together at most $p$
neighbors to the left of $v_i$ (including $v_i$). This is an
intermediate notion of sparseness not as strict as bounded degree
though not as general as bounded degeneracy. Extending the result of
\cite{ChRoSzTr}, Chen and Schelp proved that there is a constant
$c(p)$ such that every $p$-arrangeable graph $H$ on $n$ vertices has
Ramsey number at most $c(p)n$. This gives linear Ramsey numbers for
planar graphs and more generally for graphs that can be drawn on a
bounded genus surfaces. The best known bound \cite{GrRoRu} for
$p$-arrangeable bipartite $H$ is $r(H) \leq 2^{cp\log p}n$, where
$c$ is a constant. The proof of Theorem \ref{degenerate} can be
modified to give $r(H) \leq 2^{cp}n$ for every $p$-arrangeable
bipartite graph $H$, which is an essentially best possible bound.
Note that for every vertex $v_i$ in a $p$-arrangeable graph, there
is a subset $S_i \subset \{v_1,\ldots,v_{i-1}\}$ of size at most
$p-1$ such that for any vertex $v_j,j>i$ adjacent to $v_i$, its
neighbors in $\{v_1,\ldots,v_{i-1}\}$ form a subset of $S_i$.
Therefore, there are at most $2^{p-1}$ distinct such subsets of
neighbors. This important observation essentially allows us to treat
$p$-arrangeable bipartite graphs as if they were $p$-degenerate
graphs with maximum degree at most $2^{p-1}$, which in turn gives
the above bound on Ramsey numbers.

In spite of the above mentioned progress, the Burr-Erd\H{o}s
conjecture is still open even for the special case of $d$-degenerate
bipartite graphs in which every vertex in one part has degree at
most $d \geq 3$. Using our approach, one can make some progress on
this special case, which is discussed in the concluding remarks.

It seems plausible that $r(H) \leq 2^{c\Delta}n$ holds in general
for every graph $H$ with $n$ vertices and maximum degree $\Delta$.
The following result shows that this is at least true for graphs of
bounded chromatic number.

\begin{theorem}\label{chromatic}
If $H$ has $n$ vertices, chromatic number $q$, and maximum degree
$\Delta$, then $r(H) \leq 2^{4q\Delta}n$.
\end{theorem}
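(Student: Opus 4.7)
\medskip\noindent\textbf{Proof plan.} Passing to the majority color class in a $2$-coloring of $E(K_N)$, it suffices to establish the density version: every graph $G$ on $N = 2^{4q\Delta}n$ vertices with at least $\binom{N}{2}/2$ edges contains $H$ as a subgraph. Fix a proper $q$-coloring $V(H) = V_1 \sqcup \cdots \sqcup V_q$ into independent sets of size at most $n$. I would try to find pairwise disjoint $U_1, \ldots, U_q \subseteq V(G)$, each of size at least $n$, satisfying the cross-compatibility condition that for every $i \in [q]$ and every $S \subseteq U_1 \cup \cdots \cup U_{i-1}$ with $|S| \leq \Delta$, the common neighborhood satisfies $|N_G(S) \cap U_i| \geq n$. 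Granted such sets, the embedding of $H$ follows by a single greedy pass: processing the color classes in the order $V_1, \ldots, V_q$, map $V_i$ into $U_i$ vertex by vertex; when embedding $v \in V_i$, its already-placed neighbors form a set $S \subseteq U_1 \cup \cdots \cup U_{i-1}$ of size at most $\Delta$, there are at least $n$ admissible images in $N_G(S) \cap U_i$, and at most $|V_i| - 1 < n$ of them have been used.

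The $U_i$'s would be constructed by iterating dependent random choice $q$ times, each round losing a factor of roughly $2^{4\Delta}$ in the size of a shrinking ``reservoir'' $V^{(k)} \subseteq V(G)$. Starting from $V^{(0)} = V(G)$ of density at least $1/2$, in round $k$ I sample a random subset $T_k \subseteq V^{(k-1)}$ of size $\Theta(\Delta)$ and look at its common neighborhood $W_k \subseteq V^{(k-1)}$. Standard convexity estimates yield, with positive probability, $|W_k| \geq |V^{(k-1)}|/2^{4\Delta}$ while all but a negligible fraction of $\Delta$-subsets of $W_k$ have common neighborhoods in $V^{(k-1)}$ of comparable size. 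Declaring $U_k$ to be a subset of $W_k$ of size exactly $n$ that avoids the bad $\Delta$-subsets, and letting $V^{(k)}$ be the remainder of $W_k$ (further refined so that the cross-compatibility will propagate), one obtains after $q$ rounds $|U_k| \geq n$ for every $k$ and a nonempty reservoir, since the initial size satisfies $N \geq 2^{4q\Delta}n$.

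The principal obstacle is enforcing cross-compatibility for $\Delta$-subsets that span several $U_j$'s, not merely lie in a single one. The nested reservoir construction handles this because every $U_{k+1}$ and all subsequent $U_j$'s live inside $V^{(k)} \subseteq W_k \subseteq N_G(T_k)$, so that $\Delta$-subsets drawn from the union $U_1 \cup \cdots \cup U_{k}$ inherit controlled common neighborhoods with $U_{k+1}$ from the DRC structure enjoyed by $W_k$. Balancing the $2^{4\Delta}$ loss per round against the demand that both the reservoir and every $U_k$ retain size at least $n$ after $q$ rounds is the quantitative heart of the argument and accounts exactly for the factor $2^{4q\Delta}$ in the stated bound.
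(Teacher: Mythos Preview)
Your opening reduction is fatal: for $q \geq 3$ there is no density version of Theorem~\ref{chromatic}. The complete bipartite graph $K_{N/2,N/2}$ has more than $\binom{N}{2}/2$ edges yet contains no triangle, hence no copy of any $H$ with $\chi(H) \geq 3$, no matter how large $N$ is. So the assertion ``every graph on $N=2^{4q\Delta}n$ vertices with at least $\binom{N}{2}/2$ edges contains $H$'' is simply false, and everything downstream collapses. This is also why the iterated reservoir stalls: dependent random choice does not preserve edge density, so after one round $G[V^{(1)}]$ need not have density anywhere near $1/2$, and the next DRC step has no lower bound to work with.

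The paper never passes to a single color. It builds \emph{nested} sets $A_1 \supset \cdots \supset A_{2q-2}$: at step $i$ it bipartitions $A_i$, lets $c(i)$ be the denser of the two colors across that bipartition (so density $\geq 1/2$ is guaranteed in that color at that step), and applies DRC once to obtain $A_{i+1}$ in which almost all $\Delta$-sets have many $c(i)$-neighbors back in $A_i$. After $2q-3$ steps, pigeonhole gives a color used at least $q-1$ times; restricting to those levels yields nested $V_1 \supset \cdots \supset V_q$, and the embedding lemma (Lemma~\ref{chromatichelpful}) processes the $q$ color classes of $H$ from the innermost set outward, so that the already-placed neighbors of a vertex destined for $V_\ell$ automatically lie in $V_{\ell+1}$. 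This nesting is exactly what resolves the cross-class obstacle you flag; your disjoint-$U_i$ picture does not, because $U_1,\ldots,U_{k-1}$ do not sit inside $W_k$ and hence inherit nothing from its DRC structure. Finally, only an ``almost all $\Delta$-sets'' guarantee is ever used, with a good-set bookkeeping during the greedy embedding; your requirement that \emph{every} $S\subseteq U_1\cup\cdots\cup U_{i-1}$ with $|S|\le\Delta$ satisfy $|N_G(S)\cap U_i|\ge n$, combined with $|U_i|=n$, would force $U_i\subseteq N_G(S)$ for all such $S$, i.e.\ complete bipartiteness between $U_i$ and the earlier parts.
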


\subsection{Subgraph Multiplicity}\label{multiplicity}
Recall that Ramsey's theorem states that every $2$-edge-coloring of
a sufficiently large complete graph $K_N$ contains at least one
monochromatic copy of a given graph $H$. Let $c_{H,N}$ denote the
fraction of copies of $H$ in $K_N$ that must be monochromatic in any
$2$-edge-coloring. By an averaging argument, $c_{H,N}$ is a bounded,
monotone increasing function in $N$, and therefore has a limit $c_H$
as $N \rightarrow \infty$. The constant $c_H$ is known as the {\it
Ramsey multiplicity constant} for the graph $H$. It is simple to
show for $H$ with $m$ edges that $c_H \leq 2^{1-m}$, where this
bound comes from considering a random $2$-edge-coloring of $K_N$
with each coloring equally likely.

Erd\H{o}s and in a more general form Burr and Rosta suggested that
the Ramsey multiplicity constant is achieved by a random coloring.
These conjectures are false as was demonstrated Thomason \cite{Th1}
even for $H$ being any complete graph $K_n$ with $n \geq 4$.
Moreover, as shown in \cite{Fo}, there are $H$ with $m$ edges and
$c_H \leq m^{-m/2+o(m)}$, which demonstrates that the random
coloring is far from being optimal for some graphs.

For bipartite graphs the situation seems to be very different. The
{\it edge density} of a graph is the fraction of pairs of vertices
that are edges. The conjectures of Simonovits \cite{Sim} and
Sidorenko \cite{Si3} suggest that for any bipartite $H$ the number
of its copies in any graph $G$ on $N$ vertices and edge density
$\epsilon$ ($\epsilon > N^{-\gamma(H)}$) is asymptotically at least
the same as in the $N$-vertex random graph with edge density
$\epsilon$. So far it is known only in very special cases, i.e., for
complete bipartite graphs, trees, even cycles (see \cite{Si3}), and
recently for cubes \cite{Ha}. Our Theorem \ref{main} can be
strengthened as follows to give additional evidence for the validity
of this conjecture.

\begin{theorem}\label{main1}
Let $H$ be a bipartite graph with $n$ vertices and maximum degree $d
\geq 1$. If $\epsilon>0$ and $G$ is a graph with $N \geq 32d
\epsilon^{-d}n$ vertices and at least $\epsilon{N \choose 2}$ edges,
then $G$ contains at least $(2^7d)^{-n/2}\epsilon^{dn}N^n$ labeled
copies of $H$.
\end{theorem}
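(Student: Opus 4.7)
The strategy is to prove Theorem~\ref{main1} by a dependent random choice argument that tracks the number of embeddings rather than merely producing one. Write the bipartition of $H$ as $A\cup B$ with $|A|\le|B|$, so $|A|\le n/2$, and let $d$ denote the maximum degree of $H$.

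First, I would apply dependent random choice to produce a subset $U'\subseteq V(G)$ that is both large and has the property that every subset of size at most $d$ has a large common $G$-neighborhood. Pick $T=(t_1,\dots,t_d)$ uniformly at random in $V(G)^d$ and set $U=\bigcap_{i=1}^{d}N_G(t_i)$. Convexity of $x\mapsto x^d$ together with the edge-density hypothesis gives $\mathbb{E}|U|\ge\epsilon^d N$. For a threshold $M$ to be chosen, declare a $d$-subset $S\subseteq V(G)$ \emph{sparse} if $|N_G(S)|<M$; since $\Pr[S\subseteq U]\le (M/N)^d$, the expected number of sparse $d$-subsets contained in $U$ is at most $\binom{N}{d}(M/N)^d\le M^d/d!$. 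Choosing $M$ so that this quantity is at most $\tfrac12\epsilon^d N$, some realisation of $T$ yields a $U$ from which one vertex per sparse $d$-subset may be deleted to produce $U'\subseteq U$ with $|U'|\ge\tfrac12\epsilon^d N$ and in which every subset of size at most $d$ has common $G$-neighborhood of size at least $M$.

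Next, count labeled embeddings $\phi:V(H)\to V(G)$ greedily by embedding $B$ into $U'$ first and then $A$ into $V(G)$. At each of the $|B|$ steps for $B$ there are at least $|U'|-n\ge\tfrac14\epsilon^d N$ free choices, where I use the hypothesis $N\ge 32d\epsilon^{-d}n$. For each $a\in A$, the already-embedded neighbors $\phi(N_H(a))\subseteq U'$ form a set of at most $d$ vertices whose common $G$-neighborhood has size at least $M$, so after excluding the at most $n$ used vertices there remain at least $M-n$ valid images. Multiplying per-vertex counts gives a lower bound of $(\tfrac14\epsilon^d N)^{|B|}(M-n)^{|A|}$ on the number of labeled copies of $H$, and the chosen $M$ together with $N\ge 32d\epsilon^{-d}n$ should make this exceed $(2^7 d)^{-n/2}\epsilon^{dn}N^n$.

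The main obstacle is to calibrate $M$ so that both $|U'|$ and $M$ remain close enough to $\epsilon^d N$ that the product survives. Specifically, $|U'|$ must be large enough to inject all of $B$, which forces $M$ not too large (to control the sparse-subset count), while $M$ must remain comparable to $\epsilon^d N$ up to a factor of $O(\sqrt d)$ so that the $|A|$ contribution is not too small. The $(2^7 d)^{-n/2}$ factor in the conclusion precisely reflects this trade-off, worst when $|A|\approx |B|\approx n/2$, while the hypothesis $N\ge 32d\epsilon^{-d}n$ is just large enough to absorb the subtractions of $n$ coming from injectivity of $\phi$ and from the bad-subset pruning.
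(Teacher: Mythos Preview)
Your approach has a genuine quantitative gap. The ``delete one vertex per sparse $d$-subset'' step forces $M$ to be far too small: the constraint $M^d/d!\le\tfrac12\epsilon^d N$ gives only $M\le (d!)^{1/d}\epsilon\,N^{1/d}=O(\epsilon N^{1/d})$, which is \emph{sublinear} in $N$ for $d\ge 2$. Consequently your embedding count is at most
\[
\Big(\tfrac14\epsilon^d N\Big)^{|B|}\,M^{|A|}\;=\;O\!\left(\epsilon^{d|B|+|A|}\,N^{|B|+|A|/d}\right),
\]
and the exponent of $N$ is $|B|+|A|/d<|A|+|B|=n$ whenever $|A|\ge 1$ and $d\ge 2$. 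So the ratio of your count to the target $(2^7d)^{-n/2}\epsilon^{dn}N^n$ tends to $0$ as $N\to\infty$; no choice of $M$ in the allowed range can repair this. Your closing remark that ``$M$ must remain comparable to $\epsilon^d N$ up to a factor of $O(\sqrt d)$'' is precisely what fails: if $M=c\epsilon^d N$ then $M^d/d!\asymp N^d$, not $N$.

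This is exactly the obstacle the paper singles out: earlier applications of dependent random choice that insist \emph{every} small subset of the chosen set have large common neighborhood unavoidably produce a set whose associated threshold is sublinear, and hence cannot give a linear Ramsey bound or the full $N^n$ count. The paper's fix is to keep the threshold $x=\Theta(\epsilon^d N)$ linear in $N$ by only asking that \emph{almost all} $d$-subsets of $A$ have $\ge x$ common neighbors (Lemma~2.1 with $a=t=d$), and then to replace the naive greedy embedding by a hypergraph embedding lemma (Lemma~2.2): one shows that a down-closed hypergraph on $A$ missing at most a $(4d)^{-d}$ fraction of its $d$-sets still contains $(|A|/2)^{|B|}$ copies of the neighborhood hypergraph of $H$, and each such copy extends in $\ge(3x/4)^{|A|}$ ways to a copy of $H$. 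Both factors are now of order $(\epsilon^d N)^{\text{part size}}$, yielding the claimed $N^n$. The missing idea in your proposal is this second lemma, which lets you embed $B$ while steering around the small fraction of bad $d$-sets rather than deleting them.
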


Notice that this theorem roughly says that a large graph with edge
density $\epsilon$ contains at least $\epsilon^{dn}$ fraction of all
possible copies of $H$. If $H$ is $d$-regular, i.e., has $dn/2$
edges, then the random graph with edge density $\epsilon$ contains
$\epsilon^{dn/2}$ fraction of all possible copies of $H$. This shows
that for regular $H$ the exponent of $\epsilon$ in the above theorem
is only by a factor $2$ away from the conjectured bound. Moreover,
the same is true with a different factor for every $d$-degenerate
bipartite graph $H$ with maximum degree at most exponential in $d$.
This follows from an extension of our result on $d$-degenerate
bipartite graphs which is discussed in Section 3. A similar
extension for graphs with bounded chromatic number is discussed in
Section 4.

\subsection{Subdivided subgraphs in dense graphs}\label{sub:subdivided}

A {\it topological copy} of a graph $H$ is any graph formed by
replacing edges of $H$ by internally vertex disjoint paths. This is
an important notion in graph theory, e.g., the celebrated theorem of
Kuratowski uses it to characterize planar graphs.  In the special
case in which each of the paths replacing edges of $H$ has length
$t+1$, we obtain a {\it $t$-subdivision} of $H$. An old conjecture
of Mader and Erd\H{o}s-Hajnal which was proved in \cite{BoTh,KoSz}
says that there is a constant $c$ such that every graph with $n$
vertices and at least $cp^2n$ edges contains a topological copy of
$K_p$.

Erd\H{o}s \cite{Er} asked whether every graph on $n$ vertices with
$c_1n^2$ edges contains a $1$-subdivision of a complete graph $K_m$
with $m \geq c_2 \sqrt{n}$ for some constant $c_2$ depending on
$c_1$. Note that the above mentioned result implies that any such
graph on $n$ vertices will contain a topological copy of a complete
graph on $\Omega(\sqrt{n})$ vertices, but not necessarily a
$1$-subdivision. The existence of such a subdivision was proved in
\cite{AlKrSu}, giving a positive answer to the question of
Erd\H{o}s. Note that clique of order $O(\sqrt{n})$ has $O(n)$ edges.
So it is natural to ask whether the conjecture of Erd\H{o}s can be
generalized to show that under the same conditions as above one can
find a $1$-subdivision of every graph with $O(n)$ edges, not just of
a clique.

A result closely related to this question was obtained by Alon et
al.~in \cite{AlDuLeRoYu} (see also \cite{KoSi}). They proved, using
Szemeredi's regularity lemma, that any graph with $n$ vertices and
at least $c_1n^2$ edges contains a topological copy of every graph
with at most $c_2n$ edges ($c_2$ depends on $c_1$). Moreover, their
proof shows that the topological copy of $H$ can be taken to be a
$3$-subdivision of $H$.

Motivated by the conjecture of Burr and Erd\H{o}s that graphs with
bounded degeneracy have linear Ramsey numbers, Alon \cite{Al1}
proved that any graph on $n$ vertices in which no two vertices of
degree at least three are adjacent has Ramsey number at most $12n$.
In particular, the Ramsey number of a $1$-subdivision of an
arbitrary
 graph with $n$ edges is linear in $n$.

The following density-type theorem improves on these previous
results concerning subdivided graphs, and gives a positive answer to
the generalization of the Erd\H{o}s conjecture mentioned above.

\begin{theorem}\label{subdivided1}
Let $H$ be a graph with $n$ edges and no isolated vertices and let
$G$ be a graph with $N$ vertices and $\epsilon N^2$ edges such that
$N \geq 100\epsilon^{-3}n$. Then $G$ contains the $1$-subdivision of
$H$.
\end{theorem}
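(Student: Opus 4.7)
The plan is to use dependent random choice to locate a set $U \subseteq V(G)$ of size at least $|V(H)|$ in which every pair of vertices has many common neighbors in $G$; then embed $V(H)$ into $U$ by an arbitrary injection; and finally greedily choose a distinct middle vertex for each edge of $H$.

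Concretely, let $h = |V(H)| \leq 2n$ (since $H$ has no isolated vertex) and set $T = 3n$. Draw three vertices $v_1, v_2, v_3 \in V(G)$ independently and uniformly at random, and put $U_0 = N(v_1) \cap N(v_2) \cap N(v_3)$. By convexity of $t \mapsto t^3$,
\[
\mathbb{E}[|U_0|] \;=\; \sum_{v \in V(G)} \bigl(d(v)/N\bigr)^3 \;\geq\; N(2\epsilon)^3 \;=\; 8\epsilon^3 N.
\]
Call a pair $\{x,y\}$ of distinct vertices \emph{bad} if $|N(x) \cap N(y)| < T$. A bad pair lies inside $U_0$ with probability $\bigl(|N(x) \cap N(y)|/N\bigr)^3 < (T/N)^3$, so the expected number $B$ of bad pairs in $U_0$ is at most $\binom{N}{2}(T/N)^3 \leq T^3/(2N) = 27n^3/(2N)$. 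Under the hypothesis $N \geq 100\epsilon^{-3}n$, these estimates give $\mathbb{E}[|U_0|] \geq 800n$ and $\mathbb{E}[B] \leq \tfrac{27}{200}\epsilon^3 n^2$, and the numeric constant $100$ is tuned so that $\mathbb{E}[|U_0|] - 2\mathbb{E}[B] \geq 2n \geq h$. Hence some realization $(v_1,v_2,v_3)$ satisfies $|U_0| - 2B \geq h$; deleting one vertex from each bad pair yields $U \subseteq U_0$ with $|U| \geq h$ and every pair in $U$ having at least $T = 3n$ common neighbors in $G$.

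With such a $U$ in hand, fix any injection $\phi : V(H) \to U$ and process the edges of $H$ in any fixed order. For each edge $e = uv$, I must pick a middle vertex $m_e \in N(\phi(u)) \cap N(\phi(v))$ avoiding $\phi(V(H))$ (at most $h \leq 2n$ vertices) and the middle vertices already chosen (at most $n-1$ of them). Since $|N(\phi(u)) \cap N(\phi(v))| \geq T = 3n > h + (n-1)$, an admissible $m_e$ always exists, and the resulting map gives the desired $1$-subdivision of $H$ in $G$.

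The main obstacle is the dependent-random-choice step. The choice $s = 3$ of samples is forced by the $\epsilon^{-3}$ factor in the hypothesis: with $s$ samples, $|U_0|$ has typical size $N(2\epsilon)^s$ whereas the bad-pair count scales like $T^s/N^{s-2}$, and $s=3$ is the exponent that balances these two quantities against the threshold $N \asymp \epsilon^{-3}n$. Verifying the numeric inequality $\mathbb{E}[|U_0|-2B] \geq h$ cleanly, and choosing the constant $100$ so that all inequalities line up, is the technical heart of the argument; once this is done, the embedding of $V(H)$ and the greedy middle-vertex selection are routine.
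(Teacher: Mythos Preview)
Your dependent-random-choice argument has a genuine arithmetic gap. You claim that under $N \geq 100\epsilon^{-3}n$ the inequality $\mathbb{E}[|U_0|] - 2\mathbb{E}[B] \geq 2n$ holds, but substituting your own bounds gives $800n - \tfrac{27}{100}\epsilon^3 n^2$, which is \emph{not} $\geq 2n$ once $\epsilon^3 n$ exceeds roughly $3000$. Concretely, take $\epsilon = 1/4$ and $n = 10^6$: then $\tfrac{27}{100}\epsilon^3 n^2 \approx 4.2 \times 10^9$ already swamps $800n = 8 \times 10^8$. The underlying issue is structural, not a matter of tuning the constant $100$: $\mathbb{E}[B]$ scales like $n^3/N$, which at the threshold $N \asymp \epsilon^{-3}n$ is $\Theta(\epsilon^3 n^2)$, quadratic in $n$, while $\mathbb{E}[|U_0|]$ is only linear in $n$. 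Solving $8\epsilon^3 N - 27n^3/N \geq 2n$ for $N$ shows that your method --- finding a single set in which \emph{every} pair has $\geq 3n$ common neighbors --- proves the conclusion only under the stronger hypothesis $N \gtrsim \epsilon^{-3/2} n^{3/2}$, not the stated linear-in-$n$ bound.

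The paper's proof confronts exactly this obstacle by abandoning the search for a set free of all bad pairs. It passes to an auxiliary graph $G^*$ (on a cleaned-up vertex set of minimum degree $\geq \epsilon N/2$) in which an edge records ``$\geq n$ common neighbors,'' and then iteratively builds a nested chain $A_0 \supset A_1 \supset \cdots$ in which the maximum degree of $\overline{G^*}[A_i]$ shrinks by a factor $\epsilon/8$ at each step. The vertices of $H$ are ordered by decreasing degree in the squared graph $H'$, so vertex $v_i$ has degree at most $2n/i$ in $H'$, and $v_i$ is embedded into the level $A_j$ chosen so that the bad-degree there is at most $\tfrac{i}{4n}|A_j|$. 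This degree-sensitive embedding is precisely what buys the linear bound: the few high-degree vertices of $H'$ go into deep levels where bad pairs are extremely rare, while the many low-degree vertices tolerate more bad pairs and sit in shallow, larger levels. Your one-shot approach treats every vertex of $H$ as if it had the maximum possible degree, and that is what forces the superlinear loss.
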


\subsection{Forbidden induced subgraphs}\label{sub:ErdosHajnal}

A graph is {\it $H$-free} if it does not contain $H$ as an {\em
induced} subgraph. A basic property of large random graphs is that
they almost surely contain any fixed graph $H$ as an induced
subgraph. Therefore, there is a general belief that $H$-free graphs
are highly structured. For example, Erd\H{o}s and Hajnal \cite{ErHa}
proved that every $H$-free graph on $N$ vertices contains a clique
or independent set of size at least $2^{c\sqrt{\log N}}$, where
$c>0$ only depends on $H$. This is in striking contrast with the
general case where one can not guarantee a clique or independent set
of size larger than logarithmic in $N$. Erd\H{o}s-Hajnal further
conjectured that this bound can be improved to $N^{c}$. This famous
conjecture has only been solved for some particular $H$ (see, e.g,
\cite{AlPaSo} and \cite{ChSa}).

An interesting partial result for the general case was obtained by
Erd\H{o}s, Hajnal, and Pach \cite{ErHaPa}. They show that every
$H$-free graph $G$ with $N$ vertices or its complement $\bar G$
contains a complete bipartite graph with parts of size $N^{c(H)}$.
We obtain a strengthening of this result which brings it closer to
the Erd\H{o}s-Hajnal conjecture.

\begin{theorem}\label{halferdoshajnal}
For every graph $H$, there is $c>0$ such that any $H$-free graph on
$N$ vertices contains a complete bipartite graph with parts of size
$N^{c}$ or an independent set of size $N^{c}$.
\end{theorem}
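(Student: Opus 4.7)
The plan is to prove the theorem by induction on $|V(H)|$, refining the proof of the Erdős-Hajnal-Pach theorem so that the conclusion yields an actual independent set in $G$ rather than just a complete bipartite subgraph in the complement $\bar G$. The base case $|V(H)| \le 2$ is immediate: an $H$-free graph is either empty (giving an independent set of size $N$) or complete (giving $K_{N/2, N/2}$).

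For the inductive step with $|V(H)| = h \ge 3$, pick a vertex $v \in V(H)$. If $G$ is $(H-v)$-free, apply the inductive hypothesis to $H-v$ to finish. Otherwise, fix an induced copy of $H-v$ on some vertex set $W \subseteq V(G)$ with $|W| = h-1$. For each vertex $u \in V(G) \setminus W$, the adjacency pattern of $u$ to $W$ cannot coincide with the unique ``extension'' pattern that would promote $W$ to an induced copy of $H$ (with $u$ playing the role of $v$), so there are at most $2^{h-1}-1$ allowed patterns. By pigeonhole, some pattern $P$ is shared by at least $(N-h+1)/(2^{h-1}-1) \gg N \cdot 2^{-h}$ vertices; call this set $V_P$. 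The graph $G[V_P]$ is still $H$-free, and $V_P$ is uniformly adjacent/non-adjacent to $W$ according to $P$. We may now iterate: either $G[V_P]$ becomes $(H-v')$-free for some $v'$ (handing off to the outer induction on $|V(H)|$), or we again extract a large homogeneous subset. In parallel, we apply the Erdős-Hajnal-Pach theorem on the residual set to find a complete bipartite in $G$, which yields the desired conclusion immediately; and the bipartite density theorem of this paper (Theorem \ref{main}) may be invoked in dense sub-regions to produce $K_{t,t}$ with $t = N^c$ directly.

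The main obstacle is controlling the exponent $c = c(H)$ through the double induction (on $|V(H)|$ in the outer layer, on $N$ in the inner iteration), since each pigeonhole step loses a factor of $2^h$ in $N$ while each Erdős-Hajnal-Pach step shrinks the vertex set from $M$ to only $M^{c_1}$ with $c_1 < 1$. Naively iterating the latter produces sibling subsets $B^{(0)}, B^{(1)}, \ldots$ of doubly-exponentially decaying sizes $N^{c_1^i}$, so the independent sets one obtains inside each $B^{(i)}$ by recursion combine to only about $N^{c_1 c}$, falling short of the target $N^c$. I would address this by interleaving the two reductions: use the pigeonhole-on-patterns step to reduce the effective complexity of $H$ (eventually reducing to a proper induced subgraph, where the outer induction gives a strictly larger constant), reserving the Erdős-Hajnal-Pach iteration only for the terminal steps once $|V(H)|$ has been effectively shrunk. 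Properly balancing these two mechanisms—ensuring the exponent $c$ chosen for $H$ is small enough that the pigeonhole loss $2^{-h}$ per step is absorbed, yet large enough that the final polynomial conclusion is nontrivial—is the crux of the argument, and I expect the right choice is $c(H)$ defined recursively by $c(H) = c(H-v) \cdot c_1(H) / h$ or a similar decreasing recursion on $|V(H)|$.
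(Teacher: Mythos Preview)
Your proposal has a genuine gap; in fact you identify it yourself and do not close it. The two mechanisms you hope to interleave do not actually interact in the way you need. First, the pigeonhole-on-patterns step never reduces the complexity of $H$: the set $V_P$ is an induced subgraph of an $H$-free graph, so $G[V_P]$ is still merely $H$-free, and there is no mechanism by which iterating this extraction forces $G[V_P]$ to become $(H-v')$-free for some $v'$. So the ``handing off to the outer induction'' branch simply does not arise. Second, invoking Theorem~\ref{main} to produce $K_{t,t}$ with $t=N^c$ is not feasible: the bound there is $N \geq 32\Delta\epsilon^{-\Delta}n$, exponential in the maximum degree, and for $K_{t,t}$ one has $\Delta = t = N^c$. (K\H{o}v\'ari--S\'os--Tur\'an has the same defect: to get $K_{t,t}$ with $t = N^c$ one needs edge density essentially $1$, not just bounded below.) What remains is the Erd\H{o}s--Hajnal--Pach recursion, and as you correctly compute, iterating it yields independent sets of size only $N^{c_1^i}$, which cannot be summed to $N^c$.

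The paper's proof is entirely different and does not proceed by induction on $|V(H)|$ at all. It uses dependent random choice (Lemma~\ref{lem:dependent}) together with the induced embedding lemma (Corollary~\ref{cor:inducedembed}) to prove a structural dichotomy (Lemma~\ref{offdiagonalhelpful1}): any $H$-free graph with no independent set of size $t$ must contain two disjoint subsets $W_1,W_2$ of size $\Omega_H(t^{-O_H(1)}N)$ such that every vertex of $W_1$ is adjacent to all but an $\epsilon$-fraction of $W_2$. Crucially, this gives edge density \emph{almost $1$} between two linear-sized sets, not merely bounded edge density, and from that one reads off $K_{t,t}$ by taking $t$ vertices of $W_1$ and their common neighborhood in $W_2$. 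The point you are missing is exactly this: one needs a tool that converts ``no large independent set'' into ``a pair of large sets with density $1-o(1)$'', and neither the pigeonhole step nor Theorem~\ref{main} nor Erd\H{o}s--Hajnal--Pach supplies that.
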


To get a better understanding of the properties of $H$-free graphs,
one can naturally ask for an {\it asymmetric} version of the
Erd\H{o}s-Hajnal result. The proof in \cite{ErHa} first shows that
every $H$-free graph $G$ on $N$ vertices contains a {\it perfect}
induced subgraph of order $2^{c\sqrt{\log N}}$. It then uses a well
known fact that every perfect graph on $n$ vertices contains a
clique or an independent set of order $\sqrt{n}$. Therefore, it is
not clear how to adjust this proof to improve the bound of
$2^{c\sqrt{\log N}}$ in the case when we know that the maximum
clique or independent set in $G$ is rather small. The general
framework we develop in this paper can be used to obtain such a
generalization of the Erd\H{o}s-Hajnal result.

\begin{theorem}\label{offdiagonal}
There exists $c=c(H)>0$ such that for any $H$-free graph $G$ on $N$
vertices and $n_1,n_2$ satisfying $(\log n_1)(\log n_2) \leq c\log
N$, $G$ contains a clique of size $n_1$ or an independent set of
size $n_2$.
\end{theorem}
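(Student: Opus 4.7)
The plan is to prove Theorem~\ref{offdiagonal} by induction on $n_1$, using Theorem~\ref{halferdoshajnal} as the main engine. By passing to the complement of $G$ (which is $\bar H$-free), I may exchange the roles of $n_1$ and $n_2$ freely, and I take $c$ to be a small constant multiple of the constant $c_0=c_0(H)>0$ from Theorem~\ref{halferdoshajnal}.

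Apply Theorem~\ref{halferdoshajnal} to $G$. If it returns an independent set, then since $\log n_2 \le c\log N \le c_0\log N$, this set has size at least $n_2$ and I am done. Otherwise it returns a complete bipartite pair $A\times B$ with $|A|=|B|=N^{c_0}$. Iterating inside $B$ and repeating, I build a chain $A_1,A_2,\ldots,A_k$ of $H$-free subgraphs of sizes $N^{c_0},N^{c_0^2},\ldots,N^{c_0^k}$, pairwise completely joined in $G$. Inside each $A_i$, I apply the induction to extract either an independent set of size $n_2$ or a clique of size roughly $n_1^{c_0^i}$; the inductive hypothesis applies because $(\log n_1^{c_0^i})(\log n_2)=c_0^i(\log n_1)(\log n_2)\le c\log|A_i|$. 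The pairwise-complete structure then glues these cliques into a single clique in $G$.

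The main obstacle is that this direct iteration yields a clique of size only about $n_1^{c_0}$, short of the target $n_1$ by a factor of $c_0$ in the exponent. Closing the gap requires a bootstrapping argument: the weak bound obtained in one pass becomes the inductive input for a deeper pass inside a still-large ambient graph, and iterating this bootstrap a bounded number of times (depending on $c_0$) pushes the exponent arbitrarily close to $1$ at the cost of a further reduction in the constant $c$. The bookkeeping must simultaneously ensure that the iteration depth $k$ satisfies $c_0^k\log N \ge \log n_2$, so that Theorem~\ref{halferdoshajnal} can be applied at every level with the independent-set alternative still covering size $\ge n_2$; this is precisely what the hypothesis $(\log n_1)(\log n_2)\le c\log N$ provides. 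Balancing these two constraints---clique aggregation and iteration depth---is the delicate part of the argument, and it is where the product form of the hypothesis emerges naturally.
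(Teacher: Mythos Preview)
Your approach has a genuine quantitative gap that the proposed bootstrapping cannot close. The difficulty is that Theorem~\ref{halferdoshajnal} only produces subsets of size $N^{c_0}$, so each step of your iteration shrinks the ambient graph polynomially: $N \mapsto N^{c_0} \mapsto N^{c_0^2} \mapsto \cdots$. Even granting the complete-bipartite gluing, the best recursion you can extract is of the shape $\omega(N) \ge 2\,\omega(N^{c_0})$ (take the largest clique in $A$ together with the largest clique in $B$). Unwinding this gives $\omega(N) \ge 2^k$ with $k = O\bigl(\log_{1/c_0}(\log N / \log n_2)\bigr)$, hence $\omega(N)$ is only polylogarithmic in $N$; equivalently you obtain $(\log n_1)(\log\log N)^{O(1)} \lesssim \log\log N$-type bounds, far from $(\log n_1)(\log n_2)\le c\log N$. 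Your inductive variant, summing cliques of sizes $n_1^{c_0^i}$, yields only $O(n_1^{c_0})$, as you note; but feeding this back as a ``weak theorem'' makes the constant \emph{worse}, not better: if one pass turns the hypothesis with constant $c$ into a clique of size $n_1^{c_0}$, then rescaling shows you have proved the theorem with constant $c c_0 < c$, and iterating drives the constant to $0$.

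What the paper actually uses is not Theorem~\ref{halferdoshajnal} but the stronger structural Lemma~\ref{offdiagonalhelpful1}: assuming no independent set of size $t=n_2$, one finds disjoint $W_1,W_2$ of \emph{linear} size $|W_i|\ge (4t)^{-O(n^3)}N$ with every vertex of $W_1$ adjacent to almost all of $W_2$. Because the shrinkage is $N \mapsto N/\mathrm{poly}(t)$ rather than $N \mapsto N^{c_0}$, the doubling recursion $\omega_{t,n}(N)\ge 2\,\omega_{t,n}\bigl((8t)^{-10n^3}N\bigr)$ can be iterated $d \asymp \log N / \log t$ times, giving $\omega_{t,n}(N)\ge 2^d$ and hence $(\log n_1)(\log n_2)\gtrsim \log N$. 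The linear (in $N$) size of the almost-complete pair is the essential point your plan is missing; Theorem~\ref{halferdoshajnal} is itself a corollary of Lemma~\ref{offdiagonalhelpful1} and is too weak to recover Theorem~\ref{offdiagonal}.
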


\subsection{Edge intersection patterns in topological graphs}
\label{topsubsection}

The origins of graph theory are closely connected with topology and
geometry. Indeed, the first monograph on graph theory, by K\"onig
 in 1935, was entitled {\it Combinatorial Topology of
Systems of Segments}. In recent years, geometric graph theory, which
studies intersection patterns of geometric objects and graph
drawings, has rapidly developed.

A {\it topological graph} is a graph drawn in the plane with
vertices as points and edges as curves connecting endpoints and
passing through no other vertices. A topological graph is {\it
simple} if any two edges have at most one point in common. A very
special case of simple topological graphs is {\it geometric graphs}
in which edges are straight-line segments. There are many well known
open problems about graph drawings and in particular edge
intersection patterns of topological graphs. Even some innocent
looking questions in this area can be quite difficult.

For example, more than 40 years ago Conway asked what is the maximum
size of a {\it thrackle}, that is, a simple topological graph in
which every two edges intersect. He conjectured that every
$n$-vertex thrackle has at most $n$ edges. Lov\'asz, Pach, and
Szegedy \cite{LoPaSz} were the first to prove a linear upper bound
on the number of edges in a thrackle, and despite some improvement
in \cite{CaNi}, the conjecture is still open. On the other hand,
Pach and T\'oth \cite{PaTo} constructed drawings of the complete
graph in the plane with each pair of edges having at least one and
at most two points in common. Hence, to ensure a pair of disjoint
edges, the assumption that the topological graph is simple is
necessary.

For dense simple topological graphs, one might expect to
obtain a much stronger conclusion than that of Conway's conjecture,
showing that these graphs contain large patterns of pairwise disjoint
edges. Our next theorem proves that this is indeed true, extending an earlier result
of Pach and Solymosi \cite{PaSo} for geometric graphs.

\begin{theorem}\label{bipartitedisjoint}
For each $\gamma>0$ there is $\delta>0$ and $n_0$ such that every
simple topological graph $G=(V,E)$ with $n \geq n_0$ vertices and $m
\geq \gamma n^2$ edges contains two disjoint edge subsets $E_1,E_2$
each of cardinality at least $\delta n^2$ such that every edge in
$E_1$ is disjoint from every edge in $E_2$.
\end{theorem}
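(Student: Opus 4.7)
The plan is to reduce the theorem to finding a large bipartite clique in the ``edge-disjointness graph'' on $E(G)$, and then to apply an Erd\H{o}s--Hajnal-type theorem together with a structural property of simple topological graphs.

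Concretely, let $\bar\Gamma$ be the graph on vertex set $E(G)$ whose edges are the pairs of curves $(e,f)$ that are disjoint in the drawing (no shared endpoint and no crossing). A copy of $K_{s,s}$ in $\bar\Gamma$ is precisely a pair $(E_1,E_2)$ of edge sets of $G$, each of size $s$, with every $e\in E_1$ topologically disjoint from every $f\in E_2$. So the theorem reduces to showing that $\bar\Gamma$ contains $K_{s,s}$ with $s=\delta n^{2}$. The structural input I would need is that the intersection graph $\Gamma:=\overline{\bar\Gamma}$ of a simple topological graph is $H$-free for some fixed $H$: since any two edges of $G$ meet in at most one point, one should be able to exhibit a bipartite pattern of ``forced shared endpoints and forced crossings'' that cannot be realised as an induced subgraph of $\Gamma$, using both the cyclic rotation system at each vertex and a linear bound on pairwise intersecting edge families (the thrackle bound of Lov\'asz--Pach--Szegedy).

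Once $\Gamma$---and therefore $\bar\Gamma$---is known to be $H$-free, I would apply Theorem~\ref{halferdoshajnal} to $\bar\Gamma$. This yields either a bipartite clique $K_{m^{c},m^{c}}\subset\bar\Gamma$, i.e.\ pairwise topologically disjoint edge families of size $m^{c}=\Omega(n^{2c})$, or an independent set of size $m^{c}$ in $\bar\Gamma$, which corresponds to $m^{c}$ pairwise \emph{intersecting} edges of $G$. The second alternative is ruled out whenever $c>1/2$ by the $O(n)$ bound on pairwise intersecting edges in simple topological graphs, since then $m^{c}=\Omega(n^{2c})$ already exceeds any linear function of $n$.

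The main obstacle I foresee is a quantitative one: Theorem~\ref{halferdoshajnal} only delivers a bipartite clique of polynomial side $m^{c}$ with $c<1$, whereas the statement asks for side length $\delta n^{2}=\Theta(m)$, i.e., linear in $m$. Bridging this gap is the technical heart of the argument. I would attack it by an iterative boosting: use the polynomial bipartite clique produced above to pass to a substructure whose induced topological drawing is again simple, and repeat the argument, gaining a constant-factor improvement in $\bar\Gamma$-density at each round; alternatively, once $\bar\Gamma$ has been shown to have linear density, Theorem~\ref{main} applied in the right form should produce the desired $K_{s,s}$ in a single step.
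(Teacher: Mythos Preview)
Your reduction to finding a large complete bipartite subgraph in the disjointness graph $\bar\Gamma$ is natural, and you are right that the intersection graph is $H$-free (the paper takes $H$ to be the $1$-subdivision of $K_5$).  The gap is exactly where you locate it, but your proposed fixes do not close it.

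First, the dichotomy from Theorem~\ref{halferdoshajnal} does not actually rule out the independent-set alternative: the exponent $c=c(H)$ it produces is of order $1/|V(H)|^{3}$ (see the proof, where $t=\tfrac{1}{10}N^{1/(10n^{3})}$), so for $H$ on $15$ vertices one has $c\ll 1/2$.  Thus $m^{c}\ll n$ and the thrackle bound no longer gives a contradiction.  Second, even granting the bipartite-clique outcome, neither of your boosting ideas works.  ``Iterating'' gives you no new structure: the subgraph of $G$ induced on the endpoints of $E_1$ is again a simple topological graph of positive density, but you learn nothing about the disjointness graph \emph{between} $E_1$ and $E_2$ that you didn't know before, and repeating yields another $m^{c}$-sized object, not a geometrically growing one.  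And Theorem~\ref{main} is of no help: it embeds bipartite graphs of \emph{bounded} maximum degree, whereas $K_{s,s}$ with $s=\delta m$ has degree $\delta m$.

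The paper takes a genuinely different route that bypasses the search for a bipartite clique altogether.  The two ingredients you are missing are (i) a density theorem for $H$-free graphs (Theorem~\ref{largedensity}) which, fed with the $(\beta,\delta)$-density of $\bar\Gamma$ coming from a disjoint-edge analogue of the Crossing Lemma, produces a \emph{linear-sized} induced subgraph $\Gamma'$ of $\bar\Gamma$ with edge density at least $1-\epsilon$; and (ii) a separator theorem for intersection graphs of curves (Fox--Pach), applied to the curves corresponding to $V(\Gamma')$.  Since $\Gamma'$ is $(1-\epsilon)$-dense in $\bar\Gamma$, these curves have only $\epsilon|\Gamma'|^{2}$ pairwise crossings, so the separator theorem splits $V(\Gamma')$ into two linear-sized pieces with no crossings between them; these are $E_1$ and $E_2$.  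The point is that one never needs a complete bipartite subgraph in $\bar\Gamma$; a very dense linear-sized set plus geometric separation suffices.
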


This result has a natural interpretation in the context of Ramsey
theory for intersection graphs. The {\it intersection graph} of a
collection of curves in the plane has a vertex for each curve and
two of its vertices are adjacent if their corresponding curves
intersect. It is easy to show that the $1$-subdivision of $K_5$ is
not an intersection graph of curves in the plane and thus the edge
intersection graph of a topological graph has a fixed forbidden
induced subgraph. Therefore, the properties of intersection graphs
are closely related to the Erd\H{o}s-Hajnal conjecture mentioned in
the previous subsection, and one might expect to find in these
graphs two large vertex subsets with no edges between them.
Nevertheless, Theorem \ref{bipartitedisjoint} is still quite
surprising because it shows that the edge intersection graph of any
dense simple topological graph contains two {\it linear}-sized
subsets with no edges between them.

Another interesting Ramsey-type problem is to estimate the maximum
number of pairwise disjoint edges in any complete simple topological
graph. Pach and T\'oth \cite{PaTo} proved that every simple
topological graph of order $n$ without $k$ pairwise disjoint edges
has $O\big(n(\log n)^{4k-8}\big)$ edges. They use this to show that
every complete simple topological graph of order $n$ has
$\Omega(\log n/\log \log n)$ pairwise disjoint edges. Using Theorem
\ref{bipartitedisjoint}, we give a modest improvement on this bound
(the truth here is probably $n^{\epsilon}$). Our result is valid for
dense (not only complete) simple topological graphs as well.

\begin{corollary}\label{cor:drawing}
There is $\epsilon>0$ such that every complete simple topological
graph of order $n$ contains $\Omega\big((\log n)^{1+\epsilon}\big)$
pairwise disjoint edges.
\end{corollary}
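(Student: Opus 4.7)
The plan is to iterate Theorem~\ref{bipartitedisjoint} in a binary-tree fashion and combine it at the bottom of the recursion with the Pach--T\'oth bound that a simple topological graph on $n$ vertices with no $k$ pairwise disjoint edges has $O(n(\log n)^{4k-8})$ edges. Let $\delta(\gamma)$ denote the function from Theorem~\ref{bipartitedisjoint}, and set $\gamma_0=1/4$, $\gamma_{i+1}=\delta(\gamma_i)$. A complete simple topological graph $G=(V,E)$ on $n$ vertices satisfies $\binom{n}{2}\geq \gamma_0 n^2$, so Theorem~\ref{bipartitedisjoint} splits $E$ into disjoint subsets $E_0,E_1$ of size at least $\gamma_1 n^2$ each, with every edge of $E_0$ disjoint from every edge of $E_1$. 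Applying the theorem recursively to $(V,E_0)$ and $(V,E_1)$ and iterating $t$ levels deep produces $2^t$ edge sets $\{E_S:S\in\{0,1\}^t\}$, each satisfying $|E_S|\geq \gamma_t n^2$. A short induction on the index at which two distinct binary strings first differ shows that for $S\neq S'$, every edge in $E_S$ is disjoint from every edge in $E_{S'}$.

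Within each leaf $E_S$, the Pach--T\'oth bound produces a collection of $k_t \geq 2 + \Omega\bigl(\tfrac{\log(\gamma_t n)}{\log\log n}\bigr)$ pairwise disjoint edges. Concatenating across all $2^t$ leaves gives at least $2^t k_t$ pairwise disjoint edges in $G$: edges within a single leaf are disjoint by construction, and edges in different leaves are disjoint by the preceding observation. It then suffices to pick $t$ so as to maximize $2^t k_t$.

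The key quantitative ingredient is a polynomial lower bound of the form $\delta(\gamma)\geq c_1\gamma^{c_2}$ with absolute constants $c_1>0$ and $c_2\geq 1$, which I expect to fall out of the proof of Theorem~\ref{bipartitedisjoint} since the underlying density theorem (Theorem~\ref{main}) has only polynomial dependence on $\epsilon$. Granting this, $\gamma_t\geq n^{-1/2}$ holds for all $t\leq (\log\log n)/\log c_2 + O(1)$, and in this regime $k_t = \Omega(\log n/\log\log n)$ while $2^t = (\log n)^{(\log 2)/\log c_2}$; choosing $t$ at the top of this range yields $\Omega\bigl((\log n)^{1+\epsilon}\bigr)$ pairwise disjoint edges for a suitable $\epsilon>0$ (absorbing the $\log\log n$ denominator into a slightly smaller $\epsilon$).

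The main obstacle will be extracting this polynomial lower bound on $\delta(\gamma)$ cleanly: any super-polynomial decay such as $\delta(\gamma)\sim 2^{-\mathrm{poly}(1/\gamma)}$ would force $t=O(\log\log\log n)$ and yield no asymptotic improvement over the Pach--T\'oth bound. A secondary point to verify, which should cause no trouble, is that the hypothesis $n\geq n_0(\gamma_t)$ of Theorem~\ref{bipartitedisjoint} holds at the deepest level of the recursion; since $n_0$ should depend only polynomially on $1/\gamma$ and we iterate only $O(\log\log n)$ times, this is automatic for $n$ sufficiently large.
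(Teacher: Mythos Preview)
Your proposal is correct and matches the paper's argument essentially line for line: iterate the quantitative form of Theorem~\ref{bipartitedisjoint} (with $\delta(\gamma)\ge\Omega(\gamma^c)$) until the pieces have size about $n^{3/2}$, apply the Pach--T\'oth bound inside each of the $2^t=(\log n)^{\Theta(1)}$ pieces to extract $\Omega(\log n/\log\log n)$ pairwise disjoint edges from each, and combine. The one inaccuracy is that the polynomial bound on $\delta(\gamma)$ does not come from Theorem~\ref{main} but from tracking constants through Theorem~\ref{largedensity} (the structural result for $H$-free graphs that underlies the proof of Theorem~\ref{bipartitedisjoint}); the paper explicitly confirms this polynomial dependence in the remarks following that proof, so your anticipated obstacle does not arise.
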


The proof of the above two results rely on a new theorem concerning
the edge distribution of $H$-free graphs. It extends earlier results
of \cite{Ro} and \cite{FoSu} which show that $H$-free graphs contain
large induced subgraphs that are very sparse or dense. However,
these results are not sufficient for our purposes. We prove that
$H$-free graphs satisfying a seemingly weak edge density condition
contain a very dense linear-sized induced subgraph.

\subsection{Induced Ramsey numbers}\label{sub:inducedRamsey}
In the early 1970's an important generalization of Ramsey's theorem,
the Induced Ramsey Theorem, was discovered independently by Deuber;
Erd\H{o}s, Hajnal, and Posa; and R\"odl. We write $$G
\xrightarrow{\textrm{ind}}(H_1,\ldots,H_k)$$ if, for every
$k$-coloring of the edges of $G$ with colors $1,\ldots,k$, there is
an index $i$ and an induced copy of $H_i$ in $G$ that is
monochromatic of color $i$. The Induced Ramsey Theorem states that
for all graphs $H_1,\ldots,H_k$, there is a graph $G$ such that $G
\xrightarrow{\textrm{ind}}(H_1,\ldots,H_k)$, and the {\it induced
Ramsey number} $r_{\textrm{ind}}(H_1,\ldots,H_k)$ is the minimum
number of vertices in such $G$. If all $H_i=H$, then we denote
$r_{\textrm{ind}}(H_1,\ldots,H_k)=r_{\textrm{ind}}(H;k)$.

Early proofs of the Induced Ramsey Theorem give weak bounds on these
numbers. For two colors, the more recent results \cite{FoSu},
\cite{KoPrRo} significantly improve these estimates. However, it
seems that the approaches in those papers do not generalize to give
good results for many colors. There is a simple way of giving an
upper bound on the multicolor induced Ramsey
$r_{\textrm{ind}}(H_1,\ldots,H_k)$ in terms of induced Ramsey
numbers with fewer colors. Notice that if $G_1
\xrightarrow{\textrm{ind}}(H_1,\ldots,H_\ell)$, $G_2
\xrightarrow{\textrm{ind}}(H_{\ell+1},\ldots,H_k)$, and $G
\xrightarrow{\textrm{ind}} (G_1,G_2)$, then $G
\xrightarrow{\textrm{ind}} (H_1,\ldots,H_k)$. (To see this, just
group together the first $\ell$ colors and the last $k-\ell$
colors.) For fixed $H$, this gives that $r_{\textrm{ind}}(H;k)$
grows at most like a tower of $2$s of height roughly $\log k$. The
following result improves considerably on this tower bound.

\begin{theorem}\label{inducedRamsey}
For every graph $H$ there is a constant $c(H)$ such that $r_{\textrm{ind}}(H;k) \leq k^{c(H)k}$ for every integer $k \geq 2$.
\end{theorem}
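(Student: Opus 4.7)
The plan is to prove Theorem~\ref{inducedRamsey} by a probabilistic construction combined with an iterated embedding argument in the style of the density-type Theorems~\ref{main} and~\ref{main1}. Let $n=|V(H)|$ and set $N=k^{c(H)k}$ for a constant $c(H)$ to be chosen. Take $G\sim G(N,1/2)$ to be a uniformly random graph on $N$ vertices; the aim is to show that with positive probability $G\xrightarrow{\textrm{ind}}(H;k)$. An alternative scheme, which gives the same exponent $c(H)k$, is an induction on $k$ establishing the recursion $r_{\textrm{ind}}(H;k)\le k^{c_0(H)}\,r_{\textrm{ind}}(H;k-1)$; I describe the direct random-graph approach first and sketch how the induction mirrors it.

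Given any $k$-coloring $\chi$ of $E(G)$, adjoin the non-edges of $G$ as an additional color $0$ to view $\chi$ as a $(k+1)$-coloring of $\binom{V(G)}{2}$. A $\chi$-monochromatic induced copy of $H$ is then a ``typed'' copy: an injection $\phi:V(H)\to V(G)$ together with an $i\in[k]$ such that every edge of $H$ maps to a color-$i$ pair and every non-edge of $H$ to a color-$0$ pair. In a typical realization of $G(N,1/2)$ the color $0$ occupies at least $(1/2-o(1))\binom{N}{2}$ pairs, and by pigeonhole some color $i^{*}\in[k]$ occupies at least $\binom{N}{2}/(2k)$ pairs. The main technical step is an \emph{embedding lemma}: whenever a $(k+1)$-coloring of $K_N$ has color $0$ of density at least $1/4$ and some color $i^{*}$ of density at least $1/(2k)$, with $N\ge k^{c(H)k}$, it contains a typed induced copy of $H$. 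I would prove the embedding lemma by embedding the vertices of $H$ one at a time in a fixed ordering $v_1,\dots,v_n$, maintaining a large ``candidate set'' $W_j\subseteq V(G)$ of vertices that satisfy every color-$i^{*}$ adjacency constraint and every color-$0$ non-adjacency constraint imposed by the partial embedding built so far. Dependent random choice, applied in parallel for the two density constraints in the manner of the proofs of Theorems~\ref{main} and~\ref{main1}, selects each $\phi(v_j)\in W_{j-1}$ so that $|W_j|\ge|W_{j-1}|/k^{O(1)}$, leaving a non-empty final candidate set from which $\phi(v_n)$ can be chosen.

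\textbf{Main obstacle.} The principal difficulty is the simultaneous control of \emph{two} density constraints at every embedding step: color $i^{*}$ for the edges of $H$ and color $0$ for the non-edges of $H$. Theorem~\ref{main1} tracks only a single color and cannot be used as a black box. The workaround is to exploit that the non-edge color $0$ is not truly adversarial: since $G\sim G(N,1/2)$, every vertex subset of size at least, say, $\sqrt{N}$ has color-$0$ density $\sim 1/2$ by standard concentration, independently of $\chi$. One therefore runs dependent random choice to control the color-$i^{*}$ common neighborhoods as in Theorem~\ref{main1}, while using the automatic pseudo-randomness of color $0$ inside the shrinking candidate sets to handle the non-edges of $H$. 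Converting ``for typical $G$ every $\chi$ works'' into ``there exists $G$ on which every $\chi$ has a monochromatic induced $H$'' requires a delicate union-bound or deletion-type argument, because a naive union bound over all $k^{|E(G)|}$ colorings is too wasteful; the embedding lemma must therefore produce many typed copies (not merely one), and the failure probability for each fixed coloring must be driven below $k^{-|E(G)|}$. It is this tight quantitative control, together with the parallel two-sided density loss of $k^{O(1)}$ per embedding step, that forces the bound $N=k^{c(H)k}$ and gives the theorem.
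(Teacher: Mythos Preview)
Your proposal has the right core insight --- take a pseudo-random host $\Gamma$ so that the non-edges (your color $0$) are automatically well-distributed, and combine this with dependent random choice for a dense real color --- and this is exactly the paper's strategy. But two things are off.

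First, the ``main obstacle'' you identify is not an obstacle. Pseudo-randomness (say $(1/2,\lambda)$-pseudo-randomness with $\lambda=O(\sqrt{N})$) is a property of $\Gamma$ alone, independent of $\chi$; it holds w.h.p.\ for $G(N,1/2)$ and deterministically for the Paley graph. Once $\Gamma$ is fixed with this property, the embedding argument runs \emph{deterministically for every} $k$-coloring. There is no union bound over $k^{|E(\Gamma)|}$ colorings, no need to count many copies, no deletion step. Your claim that this tension is what ``forces the bound $N=k^{c(H)k}$'' is wrong; you already noted a few lines earlier that pseudo-randomness of color $0$ holds ``independently of $\chi$'', and you should simply trust that observation.

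Second --- and this is where the factor $k$ in the exponent actually comes from --- fixing a single globally-popular color $i^*$ and then greedily shrinking a candidate set by $k^{O(1)}$ per embedded vertex does not work. After one round of dependent random choice you obtain a set $A$ in which most small tuples have large $G_{i^*}$-common-neighborhood, but $G_{i^*}$ may be arbitrarily sparse \emph{inside} $A$, so the invariant cannot be maintained for the next step. The paper instead builds a nested chain $B_1\supset\cdots\supset B_{k(n-2)+2}$: at each level it applies Lemma~\ref{lem:dependent} (combined with the pseudo-randomness of $\bar\Gamma$, packaged as Lemma~\ref{helpful}) to whichever color is \emph{currently} densest inside $B_i$, guaranteed to have density at least $p/(2k)$ there by pseudo-randomness. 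Each step costs a factor roughly $(k/p)^{O(n)}$. Pigeonhole over the $\sim kn$ levels then yields one color appearing $n-1$ times, and the induced embedding Lemma~\ref{inducedembedding} (with its good-pair bookkeeping, not a single shrinking candidate set) finishes. The product of $\sim kn$ losses of size $(k/p)^{O(n)}$, together with the $\lambda$-condition needed for Lemma~\ref{helpful}, is the true source of $N=k^{c(H)k}$ with $c(H)$ polynomial in $n$.
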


For $H$ on $n$ vertices, the proof shows that $c(H)$ can be taken to
be $500n^3$. It is worth mentioning that as a function of $k$ (up to
the constant $c(H)$), the upper bound in Theorem \ref{inducedRamsey}
is similar to the best known estimate for ordinary Ramsey numbers.
On the other hand, it is known and easy to show that in general
these numbers grow at least exponentially in $k$. The proof of the
above theorem combines ideas used to establish bounds on Ramsey
numbers of graphs with bounded chromatic number together with some
properties of pseudo-random graphs.

\vspace{0.3cm} \noindent {\bf Organization of the paper.}\,\, In the
next section we present our key ideas and techniques and illustrate
them on a simple example, the proof of Theorem \ref{main1}. More
involved applications of these techniques which require additional
ideas are given in Sections \ref{section3}-\ref{section4}. There we
prove results on bipartite degenerate graphs, graphs with bounded
chromatic number, and subdivided graphs, respectively. In Section
\ref{embeddinginducedgraphs}, we prove a useful embedding lemma for
induced subgraphs which we apply in Section 7 together with our
basic techniques to obtain two results on the Erd\H{o}s-Hajnal
conjecture. In Section 8, we apply this lemma again to show that
$H$-free graphs satisfying a rather weak edge density condition
contain a very dense linear-sized induced subgraph. We then use this
fact about $H$-free graphs in Section 9 to prove two results on
disjoint edge patterns in simple topological graphs. In Section
\ref{multicolorinduced}, we prove Theorem \ref{inducedRamsey} which
gives an upper bound on multicolor induced Ramsey numbers. The last
section of this paper contains some concluding remarks together with
a few conjectures and open problems. Throughout the paper, we
systematically omit floor and ceiling signs whenever they are not
crucial for the sake of clarity of presentation. We also do not make
any serious attempt to optimize absolute constants in our statements
and proofs.

\section{Dependent random choice and graph embeddings}\label{section1}

The purpose of this section is to illustrate on the simplest
example, the proof of Theorem \ref{main1}, the key ideas and
techniques that we will use. The first tool is a simple yet
surprisingly powerful lemma whose proof uses a probabilistic
argument known as {\it dependent random choice}. Early versions of
this technique were developed in the papers \cite{Go2,KoRo,Su}.
Later, variants were discovered and applied to various Ramsey and
density-type problems (see, e.g., \cite{KoSu,AlKrSu,Su1,KoRo2}).

This lemma demonstrates that every dense graph contains a large set
of vertices $A$ with the useful property that {\it almost all} small
subsets of $A$ have many common neighbors. The earlier applications
of dependent random choice for Ramsey-type problems (e.g.,
\cite{KoRo, Su, KoSu, AlKrSu, KoRo2}) required that {\it all} small
subsets of $A$ have large common neighborhood. This stronger
assumption, which is possible to obtain using dependent random
choice, allows one to use a simple greedy procedure to embed sparse
graphs. However, the price of achieving this stronger property is
rather high, since the resulting set $A$ has a sublinear number of
vertices in the order of the graph. Consequently, one cannot use
this to prove a linear upper bound on Ramsey numbers. Our main
contribution here shows how to circumvent this difficulty. The
second tool, Lemma \ref{embeddinglemma}, is an embedding result for
hypergraphs. It can be used to embed sparse bipartite graphs without
requiring all subsets of $A$ to have large common neighborhood.

For  a vertex $v$ in a graph $G$, let $N(v)$ denote the set of
neighbors of $v$ in $G$. Given a subset $U \subset G$, the {\it
common neighborhood $N(U)$ of $U$} is the set of all vertices of $G$
that are adjacent to $U$, i.e., to {\it every} vertex in $U$.
Sometimes, we write $N_G(U)$ to stress that the underlying graph is
$G$ when this is not entirely clear from the context. By a $d$-set,
we mean a set of cardinality $d$. The following lemma demonstrates
that every dense bipartite graph contains a large set of vertices
$A$ such that almost every $d$-set in $A$ has many common neighbors.

\begin{lemma}\label{lem:dependent}
If $\epsilon>0$ and $G=(V_1,V_2;E)$
is a bipartite graph with $|V_1|=|V_2|=N$ and at least $\epsilon N^2$ edges,
then for all positive integers $a,d,t,x$, there is a subset $A \subset V_2$ with $|A| \geq 2^{-1/a}\epsilon^tN$
such that for all but at most $2\epsilon^{-ta}\left(\frac{x}{N}\right)^t\left(\frac{|A|}{N}\right)^a{N \choose d}$
$d$-sets $S$ in  $A$, we have $|N(S)| \geq x$.
\end{lemma}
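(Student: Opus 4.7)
The plan is to apply dependent random choice, but with a twist that simultaneously controls both the size of $A$ and the number of ``bad'' $d$-sets in it, where I call a $d$-set $S \subset V_2$ \emph{bad} if $|N(S)| < x$. Concretely, I would pick a $t$-tuple $T=(v_1,\dots,v_t)\in V_1^t$ uniformly at random with replacement, set $A=A(T)=\bigcap_{i=1}^{t} N(v_i) \subseteq V_2$, and let $Y=Y(T)$ be the number of bad $d$-sets contained in $A$.

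Two moment computations underpin everything. First, since $\Pr[u\in A]=(d(u)/N)^t$ for each $u\in V_2$, one gets $\mathbb{E}[|A|]=\sum_{u}(d(u)/N)^t \geq N\cdot(\bar d/N)^t \geq \epsilon^t N$ by convexity of $x\mapsto x^t$ together with the edge-count hypothesis $|E|\geq \epsilon N^2$; a second application of Jensen (to $x\mapsto x^a$) then yields $\mathbb{E}[|A|^a]\geq (\mathbb{E}|A|)^a \geq \epsilon^{ta}N^a$. Second, for a fixed bad $d$-set $S$ one has $\Pr[S\subseteq A]=(|N(S)|/N)^t<(x/N)^t$, so $\mathbb{E}[Y]< (x/N)^t \binom{N}{d}$.

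The main step is to combine these two estimates through a single linear combination. Setting $\lambda=\frac{\epsilon^{ta}N^a}{2(x/N)^t\binom{N}{d}}$ and $Z=|A|^a-\lambda Y$, one has $\mathbb{E}[Z]\geq \tfrac12\epsilon^{ta}N^a$, so some outcome of $T$ achieves $Z\geq \tfrac12\epsilon^{ta}N^a$. This single inequality forces both $|A|^a\geq \tfrac12\epsilon^{ta}N^a$ (equivalently $|A|\geq 2^{-1/a}\epsilon^t N$) and, via $\lambda Y \leq |A|^a$, the bound $Y\leq |A|^a/\lambda = 2\epsilon^{-ta}(x/N)^t(|A|/N)^a\binom{N}{d}$, exactly matching the conclusion.

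I do not expect any real obstacle here; the only point worth highlighting is \emph{why} one takes the $a$-th moment of $|A|$ rather than just the first. It is precisely this choice that allows the bad-set count to scale with the \emph{actual} value $|A|$ (through the factor $(|A|/N)^a$) rather than only its guaranteed lower bound. This flexibility is what makes the lemma powerful in the embedding arguments to come, where one wants to exploit instances in which $A$ turns out substantially larger than $2^{-1/a}\epsilon^t N$.
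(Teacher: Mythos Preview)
Your proof is correct and follows essentially the same approach as the paper: both pick a random $t$-tuple $T\in V_1^t$, set $A=N(T)$, bound $\mathbb{E}[|A|^a]$ below via Jensen and $\mathbb{E}[Y]$ above directly, and then choose $T$ to make a single linear combination $|A|^a-\lambda Y$ large, yielding both conclusions simultaneously. The only cosmetic difference is that you fix $\lambda$ using the explicit bounds $\epsilon^{ta}N^a$ and $(x/N)^t\binom{N}{d}$, whereas the paper writes the weight as $\mathbb{E}[X]^a/(2\mathbb{E}[Y])$ and substitutes the bounds at the end; the arguments are otherwise identical.
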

\begin{proof}
Let $T$ be a subset of $t$ random vertices of $V_1$, chosen
uniformly with repetitions. Set $A=N(T)$, and let $X$ denote the
cardinality of $A \subset V_2$. By linearity of expectation and by convexity of $f(z)=z^t$,
$$\mathbb{E}[X]=\sum_{v \in V_2}\left(\frac{|N(v)|}{N}\right)^t=
N^{-t}\sum_{v \in V_2}|N(v)|^t \geq N^{1-t}\left(\frac{\sum_{v \in
V_1}|N(v)|}{N}\right)^t \geq \epsilon^tN.$$

Let $Y$ denote the random variable counting the number of $d$-sets
in $A$ with fewer than $x$ common neighbors. For a given $d$-set
$S$, the probability that $S$ is a subset of $A$ is
$\left(\frac{|N(S)|}{N}\right)^{t}$. Therefore, we have
$$\mathbb{E}[Y] \leq {N \choose d}\left(\frac{x-1}{N}\right)^t .$$
By convexity, $\mathbb{E}[X^a] \geq
\mathbb{E}[X]^a$. Thus, using linearity of expectation, we obtain
$$\mathbb{E}\left[X^a-\frac{\mathbb{E}[X]^a}{2\mathbb{E}[Y]}\, Y-\frac{\mathbb{E}[X]^a}{2}\right] \geq 0.$$
Therefore, there is a choice of $T$ for which this expression is nonnegative. Then
$$X^a \geq \frac{1}{2}\mathbb{E}[X]^a \geq \frac{1}{2}\epsilon^{ta}N^a$$
and $$Y\leq 2X^a\mathbb{E}[Y]\mathbb{E}[X]^{-a}<
2\epsilon^{-ta}\left(\frac{x}{N}\right)^t\left(\frac{|A|}{N}\right)^a{N
\choose d}.$$
This implies $|A|=X\geq 2^{-1/a}\epsilon^tN$, completing the proof.
\end{proof}

A {\it hypergraph} ${\cal F}=(V,E)$ consists of a vertex set $V$ and an
edge set $E$, which is a collection of subsets of $V$. It is
{\it down-closed} if $e_1 \subset e_2$ and $e_2 \in E$ implies $e_1
\in E$. The following lemma shows how to embed a sparse
hypergraph in a very dense hypergraph.

\begin{lemma}\label{embeddinglemma}
Let ${\cal H}$ be a $n$-vertex hypergraph with maximum degree $d$
such that each edge of $\cal H$ has size at most $h$. If ${\cal
F}=(V,E)$ is a down-closed hypergraph with $N \geq 4n$ vertices and
more than $(1-(4d)^{-h}){N \choose h}$ edges of cardinality $h$,
then there are at least $(N/2)^n$ labeled copies of $\cal H$ in
${\cal F}$.
\end{lemma}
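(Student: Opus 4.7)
The plan is to embed ${\cal H}$ into ${\cal F}$ greedily one vertex at a time, after introducing a hierarchical notion of a ``good'' subset of $V$ calibrated to the density hypothesis. For $0 \leq k \leq h$ and $S \subseteq V$ with $|S| = k$, call $S$ \emph{good} if either $k = h$ and $S \in E$, or $k < h$ and $|\{w \in V : S \cup \{w\} \text{ is good}\}| \geq N(1 - 1/(4d))$. By down-closedness of ${\cal F}$, every good set automatically belongs to $E$: a good set of size $k < h$ has at least one good extension, which inductively lies in $E$, so the set itself lies in $E$.

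The main technical step is a downward induction on $k$ from $h$ to $0$ showing that at most $(4d)^{-k}\binom{N}{k}$ of the $k$-subsets of $V$ are non-good. The base case $k = h$ is the hypothesis. For the inductive step I would sum $|\{w : S \cup \{w\} \text{ is non-good}\}|$ over all $(k-1)$-subsets $S$ of $V$; by double-counting this equals $k$ times the number of non-good $k$-sets, hence is at most $k(4d)^{-k}\binom{N}{k}$. Since each non-good $(k-1)$-set contributes strictly more than $N/(4d)$ to this sum, the number of non-good $(k-1)$-sets is at most $k(4d)^{-k}\binom{N}{k}/(N/(4d)) = (4d)^{-(k-1)}\binom{N-1}{k-1} \leq (4d)^{-(k-1)}\binom{N}{k-1}$, closing the induction. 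In particular at most $N/(4d)$ singletons are non-good, which is exactly the condition that the empty set be good.

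To produce the labeled copies, I would fix an arbitrary ordering $v_1, \ldots, v_n$ of $V({\cal H})$ and build $\phi$ by choosing $\phi(v_i)$ at step $i$ to be any $w$ such that (i) $w$ does not appear among the previously chosen images, and (ii) for every edge $e$ of ${\cal H}$ containing $v_i$, the set $\phi(e \cap \{v_1, \ldots, v_{i-1}\}) \cup \{w\}$ is good. Condition (ii) preserves the invariant that every partial image of an edge is good, which holds at the start because the empty set is good. Under this invariant, for each of the at most $d$ edges through $v_i$ at most $N/(4d)$ values of $w$ violate (ii), so condition (ii) forbids at most $d \cdot N/(4d) = N/4$ values of $w$, while (i) forbids a further $i - 1 \leq n - 1 < N/4$ values since $N \geq 4n$. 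Hence at least $N/2$ choices remain at every step, and since the final image $\phi(e)$ of each edge is good at size $|e|$ and therefore lies in $E$, every resulting $\phi$ is a labeled copy of ${\cal H}$ in ${\cal F}$. Multiplying per-step bounds yields at least $(N/2)^n$ labeled copies.

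The main obstacle is calibrating the threshold $1 - 1/(4d)$ in the definition of goodness so that the downward induction loses exactly the factor $4d$ per level: applied $h$ times, this factor must cancel the $(4d)^{-h}$ slack provided by the hypothesis, and simultaneously the $N/2$ budget at each greedy step must absorb both the $N/4$ contribution from the $\leq d$ edges through $v_i$ and the up to $n - 1 < N/4$ already-used vertices.
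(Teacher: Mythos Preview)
Your argument follows the same greedy embedding strategy as the paper; the only structural difference is that the paper defines a $k$-set $S$ to be good when it lies in more than $(1-(4d)^{k-h})\binom{N}{h-k}$ edges of size $h$ (so ``$\emptyset$ is good'' is literally the hypothesis and ``a good set has at most $N/(4d)$ bad one-vertex extensions'' is the lemma to prove), whereas you build the extension bound into a recursive definition and instead prove by downward induction that few $k$-sets are non-good. The embedding step is identical in the two proofs.

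There is one small slip. With your definition phrased as ``at least $N(1-1/(4d))$ vertices $w\in V$ give a good $S\cup\{w\}$,'' the $k-1$ choices $w\in S$ can never count, so a non-good $(k-1)$-set contributes only strictly more than $N/(4d)-(k-1)$ to your double-counting sum; with that weaker bound the inequality you need becomes $N-k+1\le N-4d(k-1)$, which fails for $k\ge 2$. The fix is immediate: define $S$ to be good when at most $N/(4d)$ vertices $w\in V\setminus S$ make $S\cup\{w\}$ non-good. Then non-good $S$ contributes strictly more than $N/(4d)$ as you claimed, your downward induction closes verbatim, and the embedding step is unaffected.
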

\begin{proof}
Call a subset $S \subset V$ of size $|S| \leq h$ {\it good} if $S$ is
contained in more than $\big(1-(4d)^{|S|-h}\big){N \choose h-|S|}$ edges of
$\cal F$ of cardinality $h$. For a good set $S$ with $|S|<h$ and a vertex
$j \in V \setminus S$, call $j$ {\it bad} with respect to $S$ if $S
\cup \{j\}$ is not good. Let $B_S$ denote the
set of vertices $j \in V \setminus S$ that are bad with respect to $S$.
The key observation is that if $S$ is good with $|S|<h$, then
$|B_S| \leq N/(4d)$. Indeed, suppose $|B_S|>N/(4d)$, then the
number of $h$-sets containing $S$ that are not edges of $G$ is at
least
$$\frac{|B_S|}{h-|S|}(4d)^{|S|+1-h}{N \choose h-|S|-1} >(4d)^{|S|-h}{N
\choose h-|S|},$$ which contradicts the fact that $S$ is good.

Fix a labeling $\{v_1,\ldots,v_n\}$ of the vertices of $\cal H$.
Since the maximum degree of $\cal H$ is $d$, for every vertex $v_i$
there are at most $d$ subsets $S \subset L_i=\{v_1,\ldots,v_i\}$
containing $v_i$ such that $S=e \cap L_i$ for some edge $e$ of $\cal
H$. We use induction on $i$ to find many embeddings $f$ of $\cal H$
in $\cal F$ such that for each edge $e$ of $H$, the set $f(e \cap
L_i)$ is good.

By our definition, the empty set is good. Assume at step $i$, for
all edges  the sets $f(e \cap L_i)$ are good. There are at most $d$
subsets $S$ of $L_{i+1}$ that are of the form $S=e \cap L_{i+1}$
where $e$ is an edge of $\cal H$ containing $v_{i+1}$. By the
induction hypothesis, for each such subset $S$, the set $f(S
\setminus \{v_{i+1}\})$ is good and therefore there are at most
$\frac{N}{4d}$ bad vertices in $\cal F$ with respect to it. In total
this gives at most $d\frac{N}{4d}=N/4$ vertices. The remaining at
least $3N/4-i$ vertices in ${\cal F}\setminus f(L_i)$ are good with
respect to all the above sets $f(S \setminus \{v_{i+1}\})$ and we
can pick any of them to be $f(v_{i+1})$. Notice that this
construction guarantees that $f(e \cap L_{i+1})$ is good for every
edge $e$ in  $\cal H$. In the end of the process we obtain a mapping
$f$ such that $f(e \cap L_n)=f(e)$ is good for every $e$ in $\cal
H$. In particular, $f(e)$ is contained in at least one edge of $\cal
F$ of cardinality $h$ and therefore $f(e)$ itself is an edge of
$\cal F$ since $\cal F$ is down-closed. This shows that $f$ is
indeed an embedding of $\cal H$ in $\cal F$. Since at step $i$ we
have at least $3N/4-i$ choices for vertex $v_{i+1}$ and since $N
\geq 4n$, we get at least $\prod_{i=0}^{n-1}(\frac{3}{4}N-i) \geq
(N/2)^n$ labeled copies of $\cal H$.
\end{proof}

Using these two lemmas we can now complete the proof of Theorem
\ref{main1}, which implies also Theorem \ref{main} and Corollaries
\ref{cormain}, \ref{cubecor}. For a graph $G$ and a subset $A$, we
let $G[A]$ denote the subgraph of $G$ induced by $A$. If $G=(V,E)$
is a graph with $N$ vertices and $\epsilon{N \choose 2}$ edges,
then, by averaging over all partitions $V=V_1 \cup V_2$ with
$|V_1|=|V_2|=N/2$, we can find a partition with at least
$\epsilon(N/2)^2$ edges between $V_1$ and $V_2$. Hence, Theorem
\ref{main1} follows from the following statement.

\begin{theorem}\label{mainturan}
Let $H$ be a bipartite graph with parts $U_1$ and $U_2$, $n$ vertices and
maximum degree at most $d \geq 2$. If $\epsilon>0$ and
$G=(V_1,V_2;E)$ is a bipartite graph with $|V_1|=|V_2|=N \geq
16d\epsilon^{-d}n$ and at least $\epsilon N^2$ edges, then $G$
contains at least $(32d)^{-n/2}\epsilon^{dn}N^n$ labeled copies of
$H$.
\end{theorem}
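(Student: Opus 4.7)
The plan is to combine the two lemmas of this section: apply Lemma~\ref{lem:dependent} to locate a large subset $A \subset V_1$ in which almost every $d$-set has many common neighbors in $V_2$, then use Lemma~\ref{embeddinglemma} to embed the side $U_1$ of $H$ into $A$, and finish by extending greedily to $U_2$. Write $H=(U_1,U_2;E_H)$ and, after swapping the parts of $H$ if necessary, assume $|U_2| \leq n/2$. By the symmetry of the hypothesis in $V_1$ and $V_2$, Lemma~\ref{lem:dependent} applied with the two sides of $G$ interchanged produces $A \subset V_1$ with $|A| \geq 2^{-1/a}\epsilon^t N$ such that all but at most $2\epsilon^{-ta}(x/N)^t(|A|/N)^a\binom{N}{d}$ of the $d$-subsets $S \subset A$ satisfy $|N_G(S) \cap V_2| \geq x$, for parameters $a,t,x$ to be chosen.

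To cast the embedding as an instance of Lemma~\ref{embeddinglemma}, let $\mathcal{H}$ be the hypergraph on vertex set $U_1$ whose edges are the neighborhoods $\{N_H(u):u \in U_2\}$. Each edge has size at most $d$, and the maximum degree of $\mathcal{H}$ is at most $d$ because every $v \in U_1$ has degree at most $d$ in $H$ and therefore lies in at most $d$ such neighborhoods. Let $\mathcal{F}$ be the down-closed hypergraph on $A$ whose edges are the subsets $S \subset A$ with $|N_G(S) \cap V_2| \geq x$; down-closedness holds because common neighborhoods only grow as $S$ shrinks. An embedding of $\mathcal{H}$ into $\mathcal{F}$ is then exactly an injection $f\colon U_1 \to A$ for which $|N_G(f(N_H(u))) \cap V_2| \geq x$ for every $u \in U_2$, which is precisely what the greedy extension needs. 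I will then take $a=t=d$ and $x$ of order $\epsilon^d N/d$ (slightly below $\epsilon^d N/(8d)$). A short calculation using $\binom{N}{d}/\binom{|A|}{d} \leq (2N/|A|)^d$ bounds the number of bad $d$-sets of $A$ by at most $(4d)^{-d}\binom{|A|}{d}$, while the hypothesis $N \geq 16d\epsilon^{-d}n$ ensures both $|A| \geq 4n$ and $x \geq 2|U_2|$. Lemma~\ref{embeddinglemma} then yields at least $(|A|/2)^{|U_1|}$ injections $f$ of the desired type, and the greedy step (picking $f(u) \in N_G(f(N_H(u))) \cap V_2$ avoiding previous images) contributes at least $\prod_{i=0}^{|U_2|-1}(x-i) \geq (x/2)^{|U_2|}$ extensions to $U_2$.

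Multiplying, the total count of labeled copies of $H$ in $G$ is at least $(|A|/2)^{|U_1|}(x/2)^{|U_2|}$. Substituting and using $|U_1|+|U_2|=n$, this equals $\epsilon^{dn}N^n$ times a combinatorial prefactor of the form $2^{-O(|U_1|)}(Cd)^{-|U_2|}$, which under $|U_2| \leq n/2$ is at least $(32d)^{-n/2}$ after a careful accounting of constants---in particular, using the sharper estimate $\prod_{i=0}^{k-1}(\tfrac{3}{4}M-i)\geq (3M/4-k)^k$ inside both Lemma~\ref{embeddinglemma} and the greedy step, which gives roughly $(0.7|A|)^{|U_1|}$ and $(0.6x)^{|U_2|}$ in place of the cruder $(|A|/2)^{|U_1|}$ and $(x/2)^{|U_2|}$ whenever $|A|$ and $x$ greatly exceed $n$. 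The main technical obstacle is juggling $a,t,x$ so that the density budget $(4d)^{-d}\binom{|A|}{d}$ of Lemma~\ref{embeddinglemma} is met while $x$ stays of order $\epsilon^d N/d$; the reduction $|U_2| \leq n/2$ is essential because $U_1$, embedded via Lemma~\ref{embeddinglemma}, costs only a factor $\sim 2^{-1}$ per vertex, whereas each vertex of $U_2$ costs a factor $\sim d^{-1}$, so the $d^{-|U_2|}$ loss can only be absorbed into the target $(32d)^{-n/2}$ when $|U_2|$ is at most half of $n$.
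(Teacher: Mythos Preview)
Your approach is essentially the same as the paper's: you swap the labels so that the \emph{larger} side of $H$ is embedded via Lemma~\ref{embeddinglemma} (cheap, $\sim 2^{-1}$ per vertex) and the \emph{smaller} side is filled in greedily from common neighborhoods (expensive, $\sim d^{-1}$ per vertex), exactly as the paper does with the roles of $U_1,U_2$ interchanged. One small point to tighten: with $a=t=d$ and $x=\epsilon^d N/(8d)$, your estimate $\binom{N}{d}/\binom{|A|}{d}\le (2N/|A|)^d$ gives only $2(4d)^{-d}\binom{|A|}{d}$ bad $d$-sets, which misses the hypothesis of Lemma~\ref{embeddinglemma} by a factor of $2$; the paper uses instead $|A|^d\le 2^{d-1}d!\binom{|A|}{d}$ together with $\binom{N}{d}\le N^d/d!$ to land exactly on $(4d)^{-d}\binom{|A|}{d}$, and then the greedy step gives $(3x/4)^{|U_1|}$ (not merely $(x/2)^{|U_1|}$) since $x\ge 4|U_1|$, which is what makes the final prefactor come out to $(32d)^{-n/2}$ without any appeal to sharper internal estimates in Lemma~\ref{embeddinglemma}.
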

\begin{proof}
Assume without loss of generality that $|U_2| \geq |U_1|$. Let
$\cal H$ be the hypergraph with vertex set $U_2$ such that a subset $D
\subset U_2$ is an edge of $\cal H$ if and only if there is a vertex
$u \in U_1$ with $N_H(u)=D$. This $\cal H$ has $|U_2| \leq
n$ vertices, maximum degree at most $d$ and edges of size at most $d$.

Let $x=\frac{\epsilon^d}{8d}N$, so in particular, $x \geq 2n \geq
4|U_1|$. We show that $G$ contains many copies of $H$ so that the
vertices of $U_i$ are embedded in $V_i$ for $i \in \{1,2\}$. Call a
$d$-set $S \subset V_2$ {\it nice} if $|N_G(S)| \geq x$. Let $\cal
F$ be the down-closed hypergraph with vertex set $V_2$ whose edges
are all subsets of $V_2$ which are contained in a nice $d$-set. An
important observation is that each copy of $\cal H$ in $\cal F$ can
be used to embed many distinct copies of $H$ in $G$ as follows.
Suppose that $f: U_2 \rightarrow V_2$ is an embedding of $\cal H$ in
$\cal F$. For every copy of $H$ use $f$ to embed vertices in $U_2$.
Embed vertices in $U_1$ one by one. Suppose that the current vertex
to embed is $u \in U_1$ and let $D$ be the set of neighbors of $u$
in $U_2$. Then $D$ is an edge of $\cal H$ and therefore $f(D)$ is
contained in a nice set and has at least $x$ common neighbors in
$G$. Since only at most $|U_1|$ of them can be occupied by other
vertices of the copy of $H$ which we are embedding, we still have at
least $x-|U_1|\geq \frac{3}{4} x$ available vertices to embed $u$.
Since this holds for every vertex in $U_1$, altogether we get at
least $\left(\frac{3}{4} x\right)^{|U_1|}$ distinct embeddings of
$H$ for each copy of $\cal H$ in $\cal F$.

Next we will find a large induced
subhypergraph of $\cal F$ which is sufficiently dense to apply Lemma \ref{embeddinglemma}.
By Lemma \ref{lem:dependent} with $a=t=d$, $V_2$ contains  a subset
$A$ of size $|A| \geq 2^{-1/d}\epsilon^d N \geq 2^{-1/2}\epsilon^d N$ such that the number
of $d$-sets $S \subset A$ satisfying $|N_G(S)| < x$ is at most
\begin{eqnarray*}\label{bound} 2\epsilon^{-d^2}\left(\frac{x}{N}\right)^d\left(\frac{|A|}{N}\right)^d{N
\choose d} &=&
2\epsilon^{-d^2}\left(\frac{\epsilon^{d}N/8d}{N}\right)^d\left(\frac{|A|}{N}\right)^d{N
\choose d}\\
&=& 2(8d)^{-d}\left(\frac{|A|}{N}\right)^d{N \choose d}\\
&<& (4d)^{-d}{|A| \choose d}.
\end{eqnarray*}
Here we use that $|A|^d \leq 2^{d-1}d!{|A| \choose d}$ which follows from $d \geq 2$
and $|A| \geq 2^{-1/d}\epsilon^d N > 8d$.

Applying Lemma \ref{embeddinglemma} with $h=d$, to the subhypergraph
${\cal F}[A]$ induced by the set $A$, we obtain at least
$\left(\frac{|A|}{2}\right)^{|U_2|}$ labeled copies of $\cal H$. By
the above discussion each such copy of $\cal H$ can be extended to
$\left(\frac{3}{4} x\right)^{|U_1|}$ labeled copies of $H$.
Therefore, using that $|U_1| \leq |U_2|$, $|U_1|+|U_2|=n$, $|A| \geq
2^{-1/2}\epsilon^d N$ and $x=\frac{\epsilon^d}{8d}N$, we conclude
that $G$ contains at least
$$
\left(\frac{|A|}{2}\right)^{|U_2|}\left(\frac{3}{4}x\right)^{|U_1|}
\geq \left(\frac{3}{32}\right)^{-|U_1|}
\big(2^{-3/2}\big)^{-|U_2|}\, d^{-n/2}\epsilon^{dn}N^n \geq
(32d)^{-n/2}\epsilon^{dn}N^n$$ labeled copies of $H$, completing the
proof.
\end{proof}

\section{Degenerate bipartite graphs}\label{section3}
The main result of this section is the following theorem which
implies Theorem \ref{degenerate}.

\begin{theorem}\label{mainturan1}
Let $H$ be a $d$-degenerate bipartite graph with $n$ vertices and
maximum degree $\Delta$. Let $G=(V_1,V_2;E)$ be a bipartite graph
with $|V_1|=|V_2|=N$ vertices and at least $\epsilon N^2$ edges.
Suppose $d \geq 2, d/n \leq \delta \leq 1$ and let
$x=2^{-9}\epsilon^{\left(1+(1+\delta^{-1})d\right)(1+\delta)}\Delta^{-\delta}N$.
If $x \geq  4n$, then $G$ contains at least $(x/4)^n$ labeled copies
of $H$.
\end{theorem}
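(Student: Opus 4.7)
The plan is to generalize the hypergraph embedding approach of Theorem~\ref{mainturan} by replacing the neighborhood hypergraph (which has edges of size up to $\Delta$) with a \emph{predecessor hypergraph} coming from a $d$-degenerate ordering of $H$. Fix a $d$-degenerate ordering $v_1,\ldots,v_n$ of $V(H)$, so each $v_i$ has a predecessor set $P(v_i)\subseteq\{v_1,\ldots,v_{i-1}\}$ of size at most $d$; bipartiteness of $H$ forces $P(v_i)$ to lie entirely in whichever part of $H$ is opposite to $v_i$. Let $\mathcal{H}$ be the hypergraph on $V(H)$ whose edges are the sets $P(v_i)$. All edges of $\mathcal{H}$ now have size at most $d$ rather than $\Delta$, at the cost that a single vertex may be contained in as many as $\Delta$ edges (since any $v_i$ can be a predecessor of its at most $\Delta$ successors).

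After averaging to pass to a balanced bipartite subgraph of $G$ with density $\Omega(\epsilon)$, I would apply Lemma~\ref{lem:dependent} in each direction with parameters $t\approx (1+\delta^{-1})d$ and $a\approx d$ to produce large subsets $A_1\subset V_1$ and $A_2\subset V_2$ such that all but at most $(4\Delta)^{-d}\binom{|A_i|}{d}$ of the $d$-subsets of $A_i$ are \emph{nice}, meaning they have at least $x=2^{-9}\epsilon^{(1+(1+\delta^{-1})d)(1+\delta)}\Delta^{-\delta}N$ common neighbors on the opposite side. The $\Delta^{-\delta}$ factor in $x$ and the inflated exponent of $\epsilon$ come precisely from demanding the bad-fraction bound $(4\Delta)^{-d}$ rather than the $(4d)^{-d}$ used in Lemma~\ref{embeddinglemma}: the former is needed because the predecessor hypergraph has max degree $\Delta$, not $d$.

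I would then carry out the greedy, bipartite analogue of Lemma~\ref{embeddinglemma}, processing $v_1,\ldots,v_n$ in degenerate order and placing $v_i\mapsto f(v_i)\in A_{\mathrm{side}(v_i)}$ so that (a) $f(v_i)$ is a common neighbor in $G$ of $f(P(v_i))$, which is possible because the invariant guarantees that $f(P(v_i))$ is contained in a nice $d$-set and therefore has at least $x$ common neighbors; and (b) for every future $v_j$ with $v_i\in P(v_j)$, the set $f(P(v_j))$ remains contained in a nice $d$-set after adjoining $f(v_i)$. By the same ``goodness'' argument as in Lemma~\ref{embeddinglemma} with the bound $(4\Delta)^{-d}$ in place of $(4d)^{-d}$, each of the at most $\Delta$ invariants rules out at most $|A_{\mathrm{side}(v_i)}|/(4\Delta)$ vertices, for a total of at most $|A_{\mathrm{side}(v_i)}|/4$; combined with the at most $n$ already-used images and the hypothesis $x\ge 4n$, at least $x/4$ legal choices remain for $f(v_i)$. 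Multiplying over $i=1,\ldots,n$ yields the claimed $(x/4)^n$ labeled copies of $H$.

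The main obstacle is the calibration of $t$ and $a$ in the dependent random choice step so as to simultaneously keep $|A_i|$ large, force the bad $d$-set fraction below $(4\Delta)^{-d}$, and produce the stated value of $x$ with a clean absolute constant. The specific choice $t=(1+\delta^{-1})d$, $a\approx d$ is what balances the three constraints and yields both the exponent $(1+(1+\delta^{-1})d)(1+\delta)$ of $\epsilon$ and the $\Delta^{-\delta}$ in the threshold; plugging into the bound from Lemma~\ref{lem:dependent}, simplifying using $|A_i|\le N$, and checking that the resulting inequality for the bad-$d$-set count is tight is the technical bookkeeping that constitutes the bulk of the remainder of the proof.
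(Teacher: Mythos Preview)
Your outline has a real gap. Applying Lemma~\ref{lem:dependent} independently in each direction produces sets $A_1\subset V_1$ and $A_2\subset V_2$ such that almost all $d$-sets $S\subset A_i$ have $|N_G(S)|\ge x$ in the \emph{full} opposite part $V_{3-i}$. But your embedding needs something stronger: when you place $f(v_i)$, it must lie in $A_{\mathrm{side}(v_i)}$ (otherwise the goodness invariants for the future predecessor sets $P(v_j)\ni v_i$ are meaningless), and for that you need almost all $d$-sets $S\subset A_i$ to have at least $x$ common neighbors \emph{inside} $A_{3-i}$. Since $|A_{3-i}|$ is only about $\epsilon^t N$, the $x$ common neighbors promised by dependent random choice might all lie in $V_{3-i}\setminus A_{3-i}$, and the greedy step can fail at the very first vertex whose predecessor set is nonempty. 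No calibration of $t,a$ in two independent applications of Lemma~\ref{lem:dependent} fixes this, because neither application ever sees the other set.

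The paper couples $A_1$ and $A_2$ through a second random step. With $t=(1+\delta^{-1})d$ and $u=t+d$, a \emph{single} application of Lemma~\ref{lem:dependent} (with $a=1$, not $a\approx d$) yields $A'\subset V_1$ in which few \emph{$u$-sets} have common neighborhood smaller than $x$. One then draws a uniformly random $t$-set $S\subset A'$ and sets $A_2=N(S)\subset V_2$, $A_1=A'\setminus S$. The identity $N(T')\cap A_2=N(T'\cup S)$ shows that a $d$-set $T'\subset A_1$ with fewer than $x$ neighbors in $A_2$ forces the $u$-set $T'\cup S$ to be bad in $A'$, and the expected number of bad $u$-sets containing $S$ is small. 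In the other direction, a $d$-set $R\subset A_2$ with fewer than $x$ neighbors in $A_1$ must satisfy $|N_{A'}(R)|<2x$, and since $R\subset A_2$ happens only when $S\subset N_{A'}(R)$, the probability of this is at most $(2x/|A'|)^t$, which is tiny. A Markov argument then fixes a single $S$ for which both bad counts are below $(2\Delta)^{-d}\binom{x}{d}$, and at that point Lemma~\ref{degeneracyhelpful} (which is exactly the bipartite goodness embedding you sketched) finishes the job. Your embedding step is essentially correct; what is missing is this mechanism that makes $A_1$ and $A_2$ see each other.
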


\noindent To obtain from this statement Theorem \ref{degenerate},
recall that every graph with $N$ vertices and $\epsilon{N \choose
2}$ edges, has a partition $V=V_1 \cup V_2$ with $|V_1|=|V_2|=N/2$
such that the number of edges between $V_1$ and $V_2$ is at least
$\epsilon(N/2)^2$. Moreover, our result shows that if $H$ is a
bipartite $d$-degenerate graph of order $n$ and maximum degree at
most exponential in $d$, then every large graph $G$ with edge
density $\epsilon$ contains at least a fraction $\epsilon^{O(dn)}$
of all possible copies of $H$. This is best possible up to the
constant factor in the exponent and shows that Sidorenko's
conjecture discussed in Section \ref{multiplicity} is not very far
from being true.

\vspace{0.15cm} \noindent {\bf Proof of Theorem \ref{mainturan1}.}\,
Let $t=(1+\delta^{-1})d$ and $u=t+d$. By Lemma \ref{lem:dependent}
with parameters $a=1, u, t, x$, $V_1$ contains a subset $A'$ with
$|A'| \geq \frac{1}{2}\epsilon^t N$ such that the number $Y$ of
$u$-sets $T \subset A'$ with $|N(T)| < x$ is at most
$$Y \leq 2\epsilon^{-t}\left(\frac{x}{N}\right)^t\left(\frac{|A'|}{N}\right) {N \choose u} \leq 2\epsilon^{-t}
\left(\frac{x}{N}\right)^t{N \choose u}.$$

Let $S$ be a random subset of $A'$ of size $t$ and let $A_2=N(S)$.
Denote by $Q$ the random variable counting the number of $u$-sets $T
\subset A'$ containing $S$ such that $|N(T)| <x$. Note that the
number of $u$-sets $T$ with $|N(T)| <x$  is at most $Y$ and each of
them contains the random subset $S$ with probability ${u \choose
t}/{|A'| \choose t}$. Thus, using that $t-d=\delta^{-1}d$, $u=t+d,
\frac{u^u}{u!}<e^u<2^{3u/2}$ and $|A'| \geq 2 x \geq u$, we have
\begin{eqnarray*}\mathbb{E}[Q] &=& \frac{{u \choose t}}{{|A'|
\choose t}}Y \leq \frac{{u \choose t}}{{|A'| \choose t}}
2\epsilon^{-t}\left(\frac{x}{N}\right)^t{N \choose u} \leq
2\left(\frac{ux}{\epsilon|A'|N}\right)^t{N \choose u}\\ &\leq&
2\left(\frac{ux}{\frac{1}{2}\epsilon^{t+1}N^2}\right)^t{N \choose u}
 \leq
2^{t+1}\frac{u^t}{u!}\epsilon^{-(t+1)t}\left(\frac{x}{N}\right)^{t-d}x^d\\
&=& 2^{t+1}\frac{u^u}{u!}2^{-9d/\delta}\Delta^{-d}\frac{x^d}{u^d}
<\frac{1}{2}(2\Delta)^{-d}{x \choose d} .\end{eqnarray*} It is
important to observe that $Q$ also gives an upper bound on the
number of $d$-sets $T'$ in $A'\setminus S$ which have less than $x$
common neighbors in $A_2$. Indeed, we can correspond to every such
$T'$ a set $T=T' \cup S$. Since $N(T)= N(S) \cap N(T')=A_2 \cap
N(T')$, $T$ has less than $x$ common neighbors. Therefore the number
of sets $T'$ is bounded by the number of sets $T$. Let
$A_1=A'\setminus S$. Then, using that $t=d+\delta^{-1}d \leq 2n \leq
x/2$, we have that $|A_1|=|A'|-|S| \geq 2x-t\geq x$.

Let $Z$ denote the random variable counting the number of subsets of
$A_2$ with cardinality $d$ with less than $x$ common neighbors in
$A_1$. Note that such a set has at most $t+x \leq 2x$ common
neighbors in $A'$. For a given $d$-set $R \subset V_2$, the
probability that $R$ is a subset of $A_2$ is ${|N_{A'}(R)| \choose
t}{|A'| \choose t}^{-1} \leq \left(
\frac{|N_{A'}(R)|}{|A'|}\right)^t$. Therefore, using that
$t=d+\delta^{-1}d$, we have
\begin{eqnarray*}\mathbb{E}[Z] &<& {N \choose
d}\left(\frac{2x}{|A'|}\right)^t<\frac{N^d}{d!}\left(\frac{2x}{\frac{1}{2}\epsilon^tN}\right)^t
=2^{2t}\epsilon^{-t^2}\left(\frac{x}{N}\right)^{t-d} \frac{x^d}{d!}\\
&=& 2^{2t-9d/\delta}\epsilon^t\Delta^{-d}\frac{x^d}{d!} <
\frac{1}{2}(2\Delta)^{-d}{x \choose d} .\end{eqnarray*} Since $Q$
and $Z$ are nonnegative discrete random variables, by Markov's
inequality, $\mathbb{P}\big[Q>2\mathbb{E}[Q]\big]<1/2$ and
$\mathbb{P}\big[Z>2\mathbb{E}[Z]\big]<1/2$. Thus there is a choice
of set $S$ such that $$Q \leq 2\mathbb{E}[Q] < (2\Delta)^{-d}{x
\choose d}$$ and
$$Z \leq 2\mathbb{E}[Z]<(2\Delta)^{-d}{x \choose d}.$$

Since $Q<{x \choose d}$ and $|A_1| \geq x$, then there is a $d$-set
in $A_1$ that has at least $x$ common neighbors in $A_2$ and so
$|A_2|\geq x$. Therefore, for each $i \in \{1,2\}$, $|A_i| \geq x$
and all but less than $(2\Delta)^{-d}{x \choose d}$ subsets of $A_i$
of size $d$ have at least $x$ common neighbors in $A_{3-i}$. By
Lemma \ref{degeneracyhelpful} applied to the induced subgraph of $G$
by $A_1 \cup A_2$, we have that $G$ contains at least $(x/4)^n$
labeled copies of $H$. \hfill $\Box$

\begin{lemma}\label{degeneracyhelpful}
Let $H=(U_1,U_2;F)$ be a $d$-degenerate bipartite graph with $n$
vertices and maximum degree $\Delta$. Let $G=(A_1,A_2;E)$ be a
bipartite graph such that for $i \in \{1,2\}$, $|A_i| \geq x \geq
4n$ and the number of $d$-sets $U \subset A_i$ with $N(U) < x$ is
less than $(2\Delta)^{-d}{x \choose d}$. Then $G$ contains at least
$(x/4)^{n}$ labeled copies of $H$.
\end{lemma}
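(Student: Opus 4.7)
The plan is to mimic the inductive embedding argument in the proof of Lemma~\ref{embeddinglemma}, adapted to the degenerate bipartite setting by defining a notion of ``good'' subset whose threshold scales with $x$ rather than with $|A_i|$. First fix a degeneracy ordering $v_1,\dots,v_n$ of $V(H)$, so the backward neighborhood $N^-(v_i) \subseteq L_{i-1}:=\{v_1,\dots,v_{i-1}\}$ has size at most $d$; since $H$ is bipartite, $N^-(v_i) \subseteq U_{3-\sigma(v_i)}$, where $\sigma(v_i) \in \{1,2\}$ denotes the side of $v_i$. Call a subset $S \subseteq A_s$ with $|S| \leq d$ \emph{good} if the number of $d$-supersets $T \subseteq A_s$ of $S$ satisfying $|N_G(T)| < x$ is strictly less than $(2\Delta)^{|S|-d}\binom{x-|S|}{d-|S|}$. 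The hypothesis says exactly that the empty set is good, and for $|S|=d$ the threshold is $1$, so being good there just means $|N_G(S)| \geq x$.

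The key claim, proved by the same double-counting as in Lemma~\ref{embeddinglemma}, is that every good $S$ with $|S|<d$ satisfies $|B_S|<x/(2\Delta)$, where $B_S = \{v \in A_s \setminus S: S \cup \{v\} \text{ is not good}\}$. For each $v \in B_S$, the set $S \cup \{v\}$ fails goodness and hence has at least $(2\Delta)^{|S|+1-d}\binom{x-|S|-1}{d-|S|-1}$ bad $d$-supersets; summing over $v \in B_S$ (and observing that each bad $d$-superset of $S$ is counted at most $d-|S|$ times) gives a lower bound on the bad $d$-supersets of $S$. Combining with the goodness upper bound and simplifying via $\binom{x-|S|}{d-|S|}=\frac{x-|S|}{d-|S|}\binom{x-|S|-1}{d-|S|-1}$ yields $|B_S| < (x-|S|)/(2\Delta) \leq x/(2\Delta)$.

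With these ingredients, embed $H$ greedily in the degeneracy order, maintaining the invariant that $f(N^-(v_j) \cap L_i)$ is good in $A_{3-\sigma(v_j)}$ for every $j \in \{1,\dots,n\}$; this holds trivially at $i=0$. At step $i+1$, a valid choice of $f(v_{i+1}) \in A_{\sigma(v_{i+1})}$ must avoid: the at most $n$ previously used vertices; the vertices outside $N_G(f(N^-(v_{i+1})))$, whose intersection with $A_{\sigma(v_{i+1})}$ has size at least $x$ because by the invariant $f(N^-(v_{i+1}))$ is good and hence contained in at least one dense $d$-superset; and, for each of the at most $\Delta$ forward neighbors $v_j$ of $v_{i+1}$, the set $B_{f(N^-(v_j) \cap L_i)}$ of size less than $x/(2\Delta)$. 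The total number of forbidden vertices is therefore at most $n + \Delta \cdot x/(2\Delta) \leq 3x/4$ (using $n \leq x/4$), leaving at least $x/4$ valid choices at each of the $n$ steps and producing $(x/4)^n$ labeled copies of $H$ in $G$. The main delicacy, which I expect to be the crux, is choosing the good-set threshold correctly: using $\binom{|A_s|-|S|}{d-|S|}$ in place of $\binom{x-|S|}{d-|S|}$ would only yield $|B_S| < |A_s|/(2\Delta)$, which can overwhelm the $x$ available common neighbors when $|A_s| \gg x$; the hypothesis's threshold of $(2\Delta)^{-d}\binom{x}{d}$ (rather than a fraction of $\binom{|A_s|}{d}$) is exactly what makes the $x$-scaled definition compatible with the base case of the induction.
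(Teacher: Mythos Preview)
Your proposal is correct and follows essentially the same argument as the paper: the paper likewise defines a subset $U\subset A_i$ with $|U|<d$ to be good when it lies in fewer than $(2\Delta)^{|U|-d}\binom{x}{d-|U|}$ bad $d$-sets (your variant with $\binom{x-|S|}{d-|S|}$ is an inconsequential difference), proves the identical bound $|B_U|\le x/(2\Delta)$ by the same double count, and then embeds along a degeneracy order maintaining exactly your invariant, obtaining at least $x-x/2-(h-1)>x/4$ choices at each step. Your closing remark about why the threshold must scale with $x$ rather than $|A_s|$ is precisely the point.
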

\begin{proof}
A $d$-set $S \subset A_i$ is {\it good} if $|N(S)| \geq x$,
otherwise it is {\it bad}. Also, a subset $U \subset A_i$ with $|U|
< d$ is {\it good} if it is contained in less than
$(2\Delta)^{|U|-d}{x \choose d-|U|}$ bad subsets of $A_i$ of size
$d$. A vertex $v \in A_i$ is {\it bad with respect to a subset $U
\subset A_i$} with $|U|<d$ if $U$ is good but $U \cup \{v\}$ is not.
Note that, for any good subset $U \subset A_i$ with $|U|<d$, there
are at most $\frac{x}{2\Delta}$ vertices that are bad with respect
to $U$. Indeed, if not, then there would be more than
$$\frac{x/(2\Delta)}{d-|U|}(2\Delta)^{|U|+1-d}{x \choose d-|U|-1}\geq (2\Delta)^{|U|-d}{x\choose d-|U|}$$
subsets of $A_i$ of size $d$ containing $U$ that are bad, which
would contradict $U$ being good.

Since $H$ is $d$-degenerate, then there is an ordering
$\{v_1,\ldots,v_n\}$ of the vertices of $H$ such that each vertex
$v_i$ has at most $d$ neighbors $v_j$ with $j<i$. Let $N^-(v_i)$ be
all the neighbors $v_j$ of $v_i$ with $j<i$. Let
$L_h=\{v_1,\ldots,v_h\}$. We will use induction on $h$ to find at
least $(x/4)^{n}$ embeddings $f$ of $H$ in $G$ such that $f(U_i)
\subset A_i$ for $i \in \{1,2\}$ and for every vertex $v_j$ and
every $h \in [n]$, the set $f(N^-(v_j)\cap L_h)$ is good.

By our definition, the empty set is good for each $i \in \{1,2\}$.
We will embed the vertices in the increasing order of their indices.
Suppose we are embedding $v_h$. Then, by the induction hypothesis,
for each vertex $v_j$, the set $f(N^-(v_j)\cap L_{h-1})$ is good.
Since the set $f(N^-(v_h\cap L_{h-1}))=f(N^-(v_h))$ is good, it has
at least $x$ common neighbors. Also, $v_h$ has degree at most
$\Delta$, so there are at most $\Delta$ sets $f(N_-(v_j) \cap
L_{h-1})$ where $v_j$ is a neighbor of $v_h$ and $j>h$. These sets
are good, so there are at most $\Delta \frac{x}{2\Delta} = x/2$
vertices which are bad for at least one of them. This implies that
there at least $x-x/2-(h-1)>x/4$ vertices in the common neighborhood
of $f(N^-(v_h))$ which are not occupied yet and are good for all the
above sets $f(N_-(v_j) \cap L_{h-1})$. Any of these vertices can be
chosen as $f(v_h)$. Altogether, we get at least $(x/4)^n$ labeled
copies of $H$.
\end{proof}

This proof can be modified to obtain the bound $r(H) \leq 2^{cp}n$
for $p$-arrangeable bipartite $H$, where $c$ is some absolute
constant. Note that the maximum degree $\Delta$ of $H$ is only used
in the last paragraph to bound the number of sets $f(N_-(v_j) \cap
L_{h-1})$ where $v_j$ is a neighbor of $v_h$ and $j>h$. As we
already discussed in detail in the introduction if graph $H$ is
$p$-arrangeable then there is an ordering of its vertices for which
the number of distinct sets $N_-(v_j) \cap L_{h-1}$ where $v_j$ is a
neighbor of $v_h$ and $j>h$ is bounded by $2^{p-1}$ for every $h$.
Therefore, we can use for $p$-arrangeable bipartite graphs the same
proof as for $p$-degenerate bipartite graphs with maximum degree at
most $2^{p-1}$. We easily obtain the following slight variant of
Lemma \ref{degeneracyhelpful} for the proof.

\begin{lemma}\label{arrangeablehelpful}
Let $H=(U_1,U_2;F)$ be a $p$-arrangeable bipartite graph with $n$
vertices. Let $G=(A_1,A_2;E)$ be a bipartite graph such that for $i
\in \{1,2\}$, $|A_i| \geq x \geq 4n$ and the number of $p$-sets $U
\subset A_i$ with $N_G(U) < x$ is less than $2^{-p^2}{x \choose p}$.
Then $G$ contains at least $(x/4)^{n}$ labeled copies of $H$.
\end{lemma}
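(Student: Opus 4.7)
The plan is to mirror the proof of Lemma \ref{degeneracyhelpful} almost verbatim, replacing the role of the degeneracy parameter $d$ by $p$ and the role of the maximum degree $\Delta$ by $2^{p-1}$. The starting point is the observation, already made in the discussion preceding the lemma, that if $v_1,\ldots,v_n$ is a $p$-arrangement of $H$, then for each $v_h$ there exists a set $S_h \subset L_{h-1}$ of size at most $p-1$ such that $N^-(v_j) \cap L_{h-1} \subseteq S_h$ for every $j>h$ with $v_j \sim v_h$. A quick consequence is that $H$ is also $p$-degenerate \emph{in the same ordering}: letting $i$ be the largest index below $h$ with $v_i \sim v_h$, we have $N^-(v_h)\subseteq S_i \cup \{v_i\}$, so $|N^-(v_h)| \le p$.

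With this in hand, I would copy the ``good/bad'' bookkeeping of the previous proof, but threshold goodness by $2^p$ rather than $2\Delta$. Namely, call a $p$-set $U\subset A_i$ good if $|N_G(U)|\ge x$; otherwise bad. For $|U|<p$, call $U$ good if it is contained in fewer than $(2^p)^{|U|-p}\binom{x}{p-|U|}$ bad $p$-sets. The assumption of the lemma ($< 2^{-p^2}\binom{x}{p}$ bad $p$-sets) is exactly what is needed for the empty set to be good, and the same counting argument as in Lemma \ref{degeneracyhelpful} shows that any good set of size less than $p$ has at most $x/2^p$ bad extension vertices. As in the original proof, any good set of size at most $p$ has at least $x$ common neighbors (since some good $p$-set extends it).

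I would then run the same greedy embedding, processing the vertices in the order $v_1,\ldots,v_n$ and maintaining the invariant that $f(N^-(v_j) \cap L_h)$ is good for all $j$ and all $h$ with $h \le j$. When it is time to embed $v_h$, three things must hold simultaneously for the chosen image: (a) it lies in $N_G(f(N^-(v_h)))$, which by the invariant and Step 1 has size $\ge x$; (b) it is not bad with respect to any of the sets $f(N^-(v_j)\cap L_{h-1})$ for $v_j \sim v_h$ with $j>h$; (c) it is distinct from the $h-1$ already-placed images. The key point, and the only place where the $p$-arrangeable hypothesis is used beyond degeneracy, is that the sets appearing in (b) are all subsets of the single set $f(S_h)$ of size at most $p-1$, so there are at most $2^{p-1}$ distinct ones. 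Each contributes at most $x/2^p$ bad vertices, for a total of at most $2^{p-1}\cdot x/2^p = x/2$, while (c) rules out at most $n-1 < x/4$ further vertices. Hence at least $x - x/2 - x/4 > x/4$ valid choices for $f(v_h)$ remain, yielding at least $(x/4)^n$ labeled embeddings.

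There is no real obstacle: the arithmetic and case analysis are identical to Lemma \ref{degeneracyhelpful} once the parameter substitution $d\leftrightarrow p$, $\Delta \leftrightarrow 2^{p-1}$ has been made. The only thing one must verify carefully is that the two places in the original argument where $\Delta$ enters---the counting of bad vertices per good ``parent'' set, and the counting of distinct relevant parent sets when embedding $v_h$---are now both bounded by $2^{p-1}$, and that these bounds combine to give precisely the $x/2$ loss used above. The condition $x\ge 4n$ then supplies the remaining slack needed for (c).
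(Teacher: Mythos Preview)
Your proposal is correct and matches the paper's approach exactly: the paper omits the proof entirely, stating only that ``the remaining details of the proof are essentially identical and therefore omitted,'' after explaining the substitution $d \leftrightarrow p$, $\Delta \leftrightarrow 2^{p-1}$ and the key observation that the sets $N^-(v_j)\cap L_{h-1}$ for $j>h$, $v_j\sim v_h$ are all subsets of a fixed $(p-1)$-set $S_h$. Your write-up supplies precisely these omitted details, including the verification that the arrangement ordering is already a $p$-degeneracy ordering (so $|N^-(v_h)|\le p$) and that the $2^{-p^2}\binom{x}{p}$ hypothesis is what makes the empty set good under the threshold $2^p$.
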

The remaining details of the proof are essentially identical and
therefore omitted.

\section{Graphs with bounded chromatic number}\label{section2}

The following result implies Theorem \ref{chromatic} since every
graph with chromatic number $q$ and maximum degree $d$ satisfies $q
\leq d+1$ and hence $(2d+2)(2q-3)+2 \leq 4dq$. Moreover, Theorem
\ref{t41} shows that every $2$-edge-coloring of $K_N$ with $N \geq
2^{4dq}n$ contains at least $2^{-4dqn}N^{n}$ labeled monochromatic
copies of any $n$-vertex graph $H$ with chromatic number $q$ and
maximum degree $d$. This implies that the Ramsey multiplicity for
graphs with fixed chromatic number and whose average degree is at
least a constant fraction of the maximum degree is not very far from
the bound given by a random coloring.

\begin{theorem}
\label{t41} If $H$ is a graph with $n$ vertices, chromatic number
$q$, and maximum degree $d \geq 2$, then for every $2$-edge-coloring
of $K_N$ with $N \geq 2^{(2d+2)(2q-3)+2}n$, there are at least
$\left(2^{-(2d+2)(2q-3)-2}N\right)^n$ labeled monochromatic copies
of $H$.
\end{theorem}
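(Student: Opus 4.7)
The plan is to reduce to a dense monochromatic graph via the majority colour and then greedily embed $H$ into $q$ disjoint ``reservoirs'' $A_1,\dots,A_q\subseteq V(G)$ obtained by iterating Lemma \ref{lem:dependent} along the chromatic partition of $H$. Given a $2$-edge-colouring of $K_N$, let $G$ be the graph of the majority colour; it has edge density at least $1/2$, and every copy of $H$ in $G$ is monochromatic, so it suffices to produce the claimed number of labelled copies inside $G$. Fix a proper $q$-colouring $V(H)=U_1\cup\cdots\cup U_q$ and order the vertices of $H$ as $v_1,\dots,v_n$ so that all of $U_1$ precedes all of $U_2$, and so on. Set $x=2^{-(2d+2)(2q-3)-1}N$, so that $x\ge 2n$ by the hypothesis on $N$. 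The target structure is pairwise disjoint $A_1,\dots,A_q\subseteq V(G)$ with the property that whenever $S\subseteq A_1\cup\cdots\cup A_{i-1}$ has size at most $d$ and arises as the image of the earlier neighbours of some vertex of $U_i$ under a partial embedding, one has $|N_G(S)\cap A_i|\ge x$.

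Given such reservoirs, I embed $v_1,\dots,v_n$ greedily, placing each $v_i$ inside $A_{\chi(v_i)}$. When it is $v_i$'s turn, its already-embedded neighbours form a set $S$ of size at most $d$ inside $A_1\cup\cdots\cup A_{\chi(v_i)-1}$; by the defining property $|N_G(S)\cap A_{\chi(v_i)}|\ge x$, at most $n$ of these vertices are already used, and at least $x-n\ge x/2$ remain as valid targets. Multiplying over $i=1,\dots,n$ yields at least $(x/2)^n=\bigl(2^{-(2d+2)(2q-3)-2}N\bigr)^n$ labelled monochromatic copies of $H$ in $G$, which matches the claimed bound.

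The reservoirs $A_1,\dots,A_q$ are built by $2q-3$ successive applications of Lemma \ref{lem:dependent}, each with $\epsilon=1/2$ and parameters of order $d$, so that each application loses a factor of roughly $2^{-(2d+2)}$ in the size of the ambient region while keeping the fraction of ``bad'' $d$-sets exponentially small. Schematically, the first application pins down $A_1$ together with an auxiliary region inside which every $d$-subset of $A_1$ has many common neighbours; each further stage peels off a new reservoir $A_i$, disjoint from those chosen previously, while refining the ambient region so that for every realisable $d$-subset $S$ of the growing union $A_1\cup\cdots\cup A_{i-1}$ the common neighbourhood $N_G(S)\cap A_i$ retains size at least $x$. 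The $2q-3$ stages together with the per-stage loss $2^{-(2d+2)}$ account exactly for the hypothesis $N\ge 2^{(2d+2)(2q-3)+2}n$.

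The main obstacle lies precisely in this iterated construction. Lemma \ref{lem:dependent} in its stated form produces a subset $A$ on \emph{one} side of a bipartite split whose $d$-subsets have many common neighbours on the \emph{other} side, whereas here the new reservoir $A_i$, disjoint from the previous ones, must play the role of the ``other side'' for every realisable $d$-subset of the union $A_1\cup\cdots\cup A_{i-1}$. Propagating this dual property as the growing union acquires a new colour class, preserving disjointness of all $q$ reservoirs simultaneously, and union-bounding the bad $d$-sets over all stages (there are at most $n^d$ relevant ones at each stage) without exceeding the allowed size loss is the technical heart of the argument.
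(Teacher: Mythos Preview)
Your plan has two genuine gaps, both located in the iterated construction you flag as the ``main obstacle''.

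First, fixing the global majority colour $G$ at the outset is the wrong move. The density of $G$ is at least $1/2$ in all of $K_N$, but after one application of Lemma~\ref{lem:dependent} you are working inside a subset where $G$ may have arbitrarily small density, so you cannot iterate the lemma with $\epsilon=1/2$. This is also why the count $2q-3$ is mysterious in your write-up: if the colour were truly fixed, $q-1$ iterations would suffice and the bound would improve. The paper instead chooses the colour \emph{at each step}: it partitions the current set $A_i$ into two halves, takes the denser colour $c(i)$ between them (which may vary with $i$), and applies Lemma~\ref{lem:dependent} in that colour to obtain $A_{i+1}\subset A_i$. After $2q-3$ steps, pigeonhole produces a colour appearing at least $q-1$ times; the $q$ nested sets indexed by those steps then serve as the reservoirs.

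Second, the target property you state---that \emph{every} $S$ of size at most $d$ arising as an image of back-neighbours satisfies $|N_G(S)\cap A_i|\ge x$---is stronger than what Lemma~\ref{lem:dependent} delivers; the lemma only controls all but a tiny fraction of $d$-sets. Your proposed ``union bound over at most $n^d$ relevant sets'' cannot close this gap, because the relevant sets are the images $f(N^-(v_j))$, which depend on the embedding $f$ you have not yet constructed. The paper resolves this with two changes. It uses \emph{nested} sets $V_1\supset\cdots\supset V_q$ rather than disjoint reservoirs, embedding the colour classes in reverse order so that already-placed neighbours of a vertex destined for $V_\ell$ automatically lie in $V_{\ell+1}$; thus only $d$-sets inside a single $V_{\ell+1}$ need be controlled. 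And it runs the greedy embedding through a ``good hierarchy'' (Lemma~\ref{chromatichelpful}): a set of size $<d$ is declared good if it lies in few bad $d$-sets, the embedding maintains the invariant that every partial image $f(N^-(v_j)\cap L_h)$ is good, and a counting argument shows that at each step at most $d\cdot\frac{x}{2d}=x/2$ candidate vertices would spoil goodness for some future neighbour. This invariant is what replaces your union bound and yields the $(x/4)^n$ count.
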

\begin{proof}
Consider a $2$-edge-coloring of of $K_N$ with colors $0$ and $1$.
For $j \in \{0,1\}$, let $G_j$ denote the graph of color $j$. Let
$A_1$ be the  vertex set of $K_N$ and $x=2^{-(2d+2)(2q-3)}N$, so $x
\geq 4n$. We will pick subsets $A_1 \supset A_2 \supset \ldots
\supset A_{2q-2}$ such that for each $i \leq 2q-3$, we have
$|A_{i+1}| \geq |A_i|/2^{2d+2}$ and there is a color $c(i) \in
\{0,1\}$ such that there are less than $(2d)^{-d}{x \choose d}$
$d$-sets $U \subset A_{i+1}$ which have less than $x$ common
neighbors in the induced subgraph $G_{c(i)}[A_i]$.

Given $A_i$, we can pick $c(i)$ and $A_{i+1}$ as follows.
Arbitrarily partition $A_i$ into two subsets $A_{i,1}$ and $A_{i,2}$
of equal size. Let $c(i)$ denote the densest of the two colors
between $A_{i,1}$ and $A_{i,2}$. By Lemma \ref{lem:dependent} with
$\epsilon=1/2$, $a=1$, and $t=2d$, there is a subset $A_{i+1}
\subset A_{i,2} \subset A_i$ with $|A_{i+1}| \geq
2^{-2d-1}|A_{i,2}|=2^{-2d-2}|A_i|$ such that for all but at most
$$2 \cdot
2^{2d}\left(\frac{x}{|A_i|}\right)^{2d}\left(\frac{|A_{i+1}|}{|A_{i,2}|}\right){|A_{i,2}|
\choose d}\leq 2^{2d+1}\left(\frac{x}{|A_i|}\right)^{2d}{|A_i|/2
\choose d}<2^{d+1}\left(\frac{x}{|A_i|}\right)^d\frac{x^d}{d!}
<(2d)^{-d}{x \choose d}$$ $d$-sets $U \subset A_{i+1}$, $U$ has at
least $x$ common neighbors in $G_{c(i)}[A_i]$. Here, the last
inequality uses the fact that $|A_i| \geq 2^{-(i-1)(2d+2)}N \geq
2^{-(2q-4)(2d+2)}N=2^{2d+2}x$.

Given the subsets $A_1 \supset \ldots \supset A_{2q-2}$  with the
desired properties and the colors $c(1),\ldots,c(2q-3)$, notice that
$|A_{2q-2}| \geq 2^{-(2d+2)(2q-3)}N = x$. By the pigeonhole
principle, one of the two colors is represented at least $q-1$ times
in the sequence $c(1),\ldots,c(2q-3)$. Without loss of generality
suppose that $0$ is this popular color. Let $V_1=A_1$, and for $1
\leq k < q$, let $V_{k+1}=A_{j+1}$, where $j$ is the
$k^{\textrm{th}}$ smallest positive integer such that $c(j)=0$. By
applying Lemma \ref{chromatichelpful} below to the graph $G_0$ and
subsets $V_1,\ldots,V_q$, we can find at least $(x/4)^n$ labeled
monochromatic copies of $H$, which completes the proof.
\end{proof}

\begin{lemma}\label{chromatichelpful}
Suppose $G$ is a graph with vertex set $V_1$, and let $V_1 \supset
\ldots \supset V_q$ be a family of nested subsets of $V_1$ such that
$|V_q| \geq x \geq 4n$, and for $1 \leq i <q$, all but less than
$(2d)^{-d}{x \choose d}$ $d$-sets $U \subset V_{i+1}$ satisfy $|N(U)
\cap V_{i}| \geq x$. Then, for every $q$-partite graph $H$ with $n$
vertices and maximum degree at most $d$, there are at least
$(x/4)^n$ labeled copies of $H$ in $G$.
\end{lemma}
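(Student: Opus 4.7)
The plan is to adapt the greedy embedding strategy from Lemma \ref{degeneracyhelpful}, with the nested chain $V_1\supset V_2\supset\cdots\supset V_q$ playing the role of the bipartition and a $q$-coloring ordering replacing the $d$-degeneracy ordering. First, fix a proper $q$-coloring of $H$ with color classes $U_1,\ldots,U_q$, and enumerate the vertices of $H$ as $v_1,\ldots,v_n$ by listing $U_q$ first, then $U_{q-1}$, and so on down to $U_1$. Write $\pi(k)$ for the index of the class containing $v_k$ and $N^-(v_k)$ for the neighbors $v_j$ with $j<k$. Since each $U_i$ is independent, $N^-(v_k)\subset U_{\pi(k)+1}\cup\cdots\cup U_q$, so if every vertex of $U_i$ is embedded into $V_i$, then $f(N^-(v_k))\subset V_{\pi(k)+1}$.

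Next, mimicking the definition in the proof of Lemma \ref{degeneracyhelpful}, call a subset $S\subset V_{i+1}$ with $|S|\le d$ \emph{good at level $i$} if the number of $d$-subsets of $V_{i+1}$ containing $S$ that have fewer than $x$ common neighbors in $V_i$ is less than $(2d)^{|S|-d}\binom{x}{d-|S|}$. The hypothesis of the lemma says that $\emptyset$ is good at every level, good $d$-sets have at least $x$ common neighbors in the appropriate $V_i$, and the same double-counting used before shows that for each good $S$ with $|S|<d$, at most $x/(2d)$ vertices $w\in V_{i+1}$ have the property that $S\cup\{w\}$ fails to be good. I then embed $v_1,\ldots,v_n$ greedily into $G$ while maintaining the invariant that, for every $j$ and every step $k$, the set $f(N^-(v_j)\cap\{v_1,\ldots,v_k\})$ is good at level $\pi(j)$.

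At step $k$, suppose $v_k\in U_i$. By the invariant, $f(N^-(v_k))\subset V_{i+1}$ is good at level $i$, so it admits at least $x$ common neighbors inside $V_i$, and this is where $f(v_k)$ will be chosen. For each of the at most $d$ later neighbors $v_j$ of $v_k$, necessarily $\pi(j)<i$, so $V_i\subset V_{\pi(j)+1}$, and at most $x/(2d)$ vertices of $V_i$ would destroy the goodness of $f(N^-(v_j)\cap\{v_1,\ldots,v_{k-1}\})$ if chosen as $f(v_k)$. Together with the at most $n-1\le x/4$ already-used vertices, this gives at most $d\cdot x/(2d)+x/4=3x/4$ forbidden choices, leaving at least $x/4$ valid ones; multiplying across all $n$ steps yields the desired $(x/4)^n$ labeled copies of $H$.

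The one delicate point to verify is that the nesting $V_q\subset\cdots\subset V_1$ is compatible with the ordering: by mapping $U_i$ into $V_i$ and processing color classes from $U_q$ downward, every previously embedded neighbor of a vertex in $U_i$ automatically lies in $V_{i+1}$, which is exactly the host set for the goodness condition at level $i$. Once this matching between the hierarchy of sets and the hierarchy of color classes is in place, the rest is a routine rerun of the Lemma \ref{degeneracyhelpful} argument.
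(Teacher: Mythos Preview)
Your proposal is correct and essentially identical to the paper's proof: the same reverse ordering of the color classes (so that $f(N^-(v_k))\subset V_{\pi(k)+1}$), the same layered notion of ``good at level $i$'' with the $(2d)^{|S|-d}\binom{x}{d-|S|}$ threshold, the same $x/(2d)$ bound on bad extensions via double counting, and the same final count $x-d\cdot\frac{x}{2d}-(h-1)\ge x/4$ choices per step. The only cosmetic difference is notation ($U_i$ versus $W_i$, $\pi(k)$ versus membership in $W_\ell$).
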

\begin{proof}
A $d$-set $S \subset V_{i+1}$ is {\it good with respect to $i$} if
$|N(S) \cap V_{i}| \geq x$, otherwise it is {\it bad with respect to
$i$}. Also, a subset $U \subset V_{i+1}$ with $|U| < d$ is {\it good
with respect to $i$} if there are less than $(2d)^{|U|-d}{x \choose
d-|U|}$ subsets of $V_{i+1}$ of size $d$ that contain $U$ and are
bad with respect to $i$. For a good subset $U \subset V_{i+1}$ with
respect to $i$ with $|U|<d$, call a vertex $v \in V_{i+1}$ {\it bad
with respect to $U$ and $i$} if $U \cup \{v\}$ is bad with respect
to $i$. For any $i$ and subset $U\subset V_{i+1}$ that is good with
respect to $i$, there are less than $\frac{x}{2d}$ bad vertices with
respect to $U$ and $i$. Indeed, if otherwise, then the number of
subsets of $V_{i+1}$ of size $d$ containing $U$ that are bad is at
least
$$\frac{x/(2d)}{d-|U|}(2d)^{|U|+1-d}{x \choose d-|U|-1} \geq (2d)^{|U|-d}{x \choose d-|U|},$$
which contradicts the fact that $U$ is good with respect to $i$.

Consider a partition  $W_1 \cup \ldots \cup W_q$ of the vertices of
$H$ into $q$ independent sets. Order the vertices
$\{v_1,\ldots,v_n\}$ of $H$ such that the vertices of $W_i$ precede
the vertices of $W_j$ for $i>j$. Let $L_h=\{v_1,\ldots,v_h\}$. For a
vertex $v_j$, let $N^-(v_j)$ denote the set of vertices $v_i, i<j$
adjacent to $v_j$ and $N^+(v_j)$ denote the set of vertices $v_i,
i>j$ adjacent to $v_j$. By our ordering of the vertices of $H$ and
the fact that each $W_k$ is an independent set, if $w \in W_k$, $v
\in N^{-}(w)$, and $v \in W_{\ell}$, then $\ell>k$. Similarly, if $w
\in W_k$, $v \in N^{+}(w)$, and $v \in W_{\ell}$, then $\ell<k$.

We use induction on $h$ to find many embeddings $f$ of $H$ in $G$
such that $f(W_k) \subset V_k$ for all $k$, and the set $f(L_h \cap
N^-(w))$ is {\it good} with respect to $k$ for all $h$, $k$, and $w
\in W_k$. Since $f(W_i) \subset V_i$ and the sets $V_i$ are nested,
by the above discussion we also have that $f(N^-(w))\subset V_{k+1}$
for all $w \in W_k$. By our definition, the empty set is good with
respect to every $k$, which demonstrates the base case $h=0$ of the
induction. We pick the vertices for the embedding in order of their
index. Suppose we are embedding $v_h$ with $v_h \in W_\ell$. Our
induction hypothesis is that we have already embedded $L_{h-1}$ with
the desired properties, so for each $k$ and $w\in W_k$, the set
$f(L_{h-1} \cap N^-(w)) \subset V_{k+1}$ is good with respect to
$k$. We need to show how to pick $f(v_h) \in V_\ell$ that is not
already occupied such that $f(v_h)$ is adjacent to all vertices in
$f(N^-(v_h))$ and for each vertex $w \in N^+(v_h)$ with $w \in W_j$,
$f(v_h)$ is good with respect to $f(N^-(w) \cap L_{h-1})$ and $j$.

Since $f(N^-(v_h)\cap L_{h-1})=f(N^-(v_h))$ is good with respect to
$\ell$, then $f(N^-(v_h))$ is contained in a $d$-set that is good
with respect to $\ell$ and so it has at least $x$ common neighbors
in $V_{\ell}$. Let $w \in N^+(v_h)$ such that $w \in W_j$, then
$j<\ell$. Since $V_{\ell} \subset V_{j+1}$, then there are less than
$\frac{x}{2d}$ vertices in $V_\ell$ that are bad with respect to
$f(N^-(w) \cap L_{h-1})$ and $j$. Since there are at most $d$ such
$w$, then there are at least $x-d\frac{x}{2d}-(h-1) \geq x/4$
unoccupied vertices in $V_\ell$ satisfying the above properties,
which we can choose for $f(v_h)$. Altogether, we get at least
$(x/4)^n$ copies of $H$ in $G$.
\end{proof}

The constant factor in the exponent in Theorems \ref{t41} and
\ref{chromatic} can be improved for large $q$ by roughly a factor of
$2$ by picking $t \approx d+\log d$ instead of $t=2d$. Also, the
above proof can be easily adapted  to give the following upper bound
on multicolor Ramsey numbers.

\begin{theorem}
If $H_1,\ldots,H_k$ are $k \geq 2$ graphs with at most $n$ vertices,
chromatic number at most $q$, and maximum degree at most $\Delta$,
then $$r(H_1,\ldots,H_k) \leq k^{2k\Delta q}n.$$
\end{theorem}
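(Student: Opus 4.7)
The plan is to adapt the proof of Theorem \ref{t41} to the multicolor setting, replacing the use of the majority color (giving edge density at least $1/2$) by the most popular color among $k$ (giving edge density at least $1/k$). Fix a $k$-coloring of the edges of $K_N$ with $N \geq k^{2k\Delta q}n$ and let $G_c$ denote the graph of color $c$. Set $x = 4n$ and aim to produce one color $c^{\star}$ for which a monochromatic copy of $H_{c^{\star}}$ can be found by invoking Lemma \ref{chromatichelpful}.

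The first step is to build iteratively a nested sequence $A_1 \supset A_2 \supset \cdots \supset A_{m+1}$ together with colors $c(1), \ldots, c(m) \in \{1, \ldots, k\}$ such that for each $i$, fewer than $(2\Delta)^{-\Delta}\binom{x}{\Delta}$ of the $\Delta$-sets $U \subset A_{i+1}$ fail to satisfy $|N_{G_{c(i)}}(U) \cap A_i| \geq x$. The inductive step mirrors that of Theorem \ref{t41}: partition $A_i$ into two equal halves $A_{i,1}, A_{i,2}$; choose $c(i)$ to be a color with at least $|A_i|^2/(4k)$ edges between the halves (which exists by pigeonhole); and apply Lemma \ref{lem:dependent} with $\epsilon = 1/k$, $a = 1$, and $t = 2\Delta$ to produce $A_{i+1} \subset A_{i,2}$ with $|A_{i+1}| \geq (4k^{2\Delta})^{-1}|A_i|$ satisfying the desired bound on bad $\Delta$-sets.

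The second step is to iterate for $m = (q-2)k + 1$ rounds, which by pigeonhole guarantees that some color $c^{\star}$ occurs at least $q-1$ times among $c(1), \ldots, c(m)$. I would then extract from $A_1, \ldots, A_{m+1}$ a subfamily $V_1 \supset V_2 \supset \cdots \supset V_q$ by setting $V_1 = A_1$ and, for $1 \leq j < q$, $V_{j+1} = A_{i_j + 1}$ where $i_j$ is the $j$-th index with $c(i_j) = c^{\star}$. By construction these subsets satisfy the hypotheses of Lemma \ref{chromatichelpful} applied to the graph $G_{c^{\star}}$ and the graph $H_{c^{\star}}$ (which has at most $n$ vertices, chromatic number at most $q$, and maximum degree at most $\Delta$). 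Invoking that lemma produces a copy of $H_{c^{\star}}$ in $G_{c^{\star}}$, giving the desired monochromatic $H_i$ for $i = c^{\star}$.

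The main obstacle is the bookkeeping needed to match the exponent $2k\Delta q$ exactly. The per-step shrinkage is $(4k^{2\Delta})^{-1}$ and there are roughly $kq$ iterations, so the total shrinkage is $(4k^{2\Delta})^{-(q-2)k-1}$; using $4 \leq k^2$ this is $k^{-2k\Delta q - O(kq)}$, which fits within the hypothesis $N \geq k^{2k\Delta q}n$. One must also check, exactly as in the proof of Theorem \ref{t41}, that the bound on the number of bad $\Delta$-sets coming from Lemma \ref{lem:dependent} stays below $(2\Delta)^{-\Delta}\binom{x}{\Delta}$ at every round; this reduces to an inequality of the form $|A_i| \geq C k^2 \Delta \cdot x$ which is comfortably implied by the assumption on $N$. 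Slightly sharpening $t$ toward $\Delta + (\log \Delta)/(\log k)$ would further tighten the absolute constants, but is not needed for the stated bound.
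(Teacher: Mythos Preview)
Your approach is exactly the adaptation the paper has in mind (the paper gives no separate proof, merely remarking that the argument of Theorem~\ref{t41} ``can be easily adapted''), and your choice of $m=(q-2)k+1$ iterations with $\epsilon=1/k$, $t=2\Delta$ in Lemma~\ref{lem:dependent}, followed by pigeonhole and Lemma~\ref{chromatichelpful}, is precisely that adaptation.

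One small caution on the bookkeeping: the line ``using $4\le k^2$ this is $k^{-2k\Delta q - O(kq)}$, which fits within the hypothesis'' has the sign the wrong way for your stated purpose. From $4\le k^2$ you get $(4k^{2\Delta})^{(q-2)k+1}\le k^{(2\Delta+2)((q-2)k+1)}$, and expanding gives exponent $2k\Delta q + \big(2qk-4\Delta k-4k+2\Delta+2\big)$; if this parenthesised term were positive you would need \emph{more} than $k^{2k\Delta q}n$ vertices, not fewer. The point that makes it work is that any graph with maximum degree $\Delta$ has chromatic number at most $\Delta+1$, so without loss of generality $q\le \Delta+1$, and then the parenthesised term equals $-2(k-1)(\Delta+1)\le 0$. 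This also absorbs the extra factor (of order $\Delta k^2$) needed for the inequality $|A_i|\ge C\Delta k^2 x$ that controls the bad-$\Delta$-set count. You should make this use of $q\le \Delta+1$ explicit.
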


\section{Density theorem for subdivided graphs}\label{section4}
Note that the $1$-subdivision of a graph $\Gamma$ is a bipartite
graph whose first part contains the vertices of $\Gamma$ and whose
second part contains the vertices which were used to subdivide the
edges of $\Gamma$. Furthermore, the vertices in the second part have
degree two. Also, if $\Gamma$ has $n$ edges and no isolated vertices
then its $1$-subdivision has at most $3n$ vertices. Therefore,
Theorem \ref{subdivided1} follows from the following theorem.

\begin{theorem}\label{subdivided}
If $H=(U_1,U_2;F)$ is a bipartite graph with $n$ vertices such that
every vertex in $U_2$ has degree $2$, $G$ is a graph with $2N$
vertices, $2\epsilon N^2$ edges, and $N \geq 128\epsilon^{-3}n$,
then $H$ is a subgraph of $G$.
\end{theorem}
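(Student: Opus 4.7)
The plan is to bipartition $V(G)$ by averaging into $V_1\cup V_2$ with $|V_1|=|V_2|=N$ and at least $\epsilon N^2$ edges across the bipartition, and then to embed $U_1$ into $V_1$ and $U_2$ into $V_2$. Call a pair $\{a,b\}\subset V_1$ \emph{good} if $|N_G(a)\cap N_G(b)\cap V_2|\geq n$. If $f:U_1\to V_1$ is an injection sending every pair of $U_1$-vertices that share a common $U_2$-neighbor in $H$ to a good pair, then the $U_2$-embedding is immediate: for each $u\in U_2$ with $N_H(u)=\{u_1,u_2\}$, one greedily picks $g(u)\in N_G(f(u_1))\cap N_G(f(u_2))\cap V_2$ avoiding the at most $|U_2|-1<n$ previously used values. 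So the task reduces to producing such an $f$.

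To find $f$, I would apply Lemma~\ref{lem:dependent} to the bipartite graph between $V_1$ and $V_2$ with $d=2$ and threshold $x=n$. With parameters chosen so that the $\epsilon^{-3}$ in the hypothesis is matched (which points to $t=3$), the lemma produces a subset $A\subset V_1$ with $|A|$ of order $\epsilon^3 N \gg n$, together with an explicit bound on the fraction $\beta$ of pairs in $A$ that are bad. The auxiliary multigraph $\Gamma$ on $U_1$, whose $|U_2|\leq n$ edges are the pairs $\{u_1,u_2\}=N_H(u)$ for $u\in U_2$, then needs to be embedded into $A$ so that every edge maps to a good pair. Lemma~\ref{embeddinglemma} cannot be applied directly, because $\Gamma$ may have arbitrarily large maximum degree (consider a $\Gamma$ that is a star on $U_1$).

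To circumvent this I plan a heavy/light split. Fix a threshold $D$ and set $U_1^H=\{v:\deg_\Gamma(v)\geq D\}$, so that $|U_1^H|\leq 2n/D$ since $\sum_v\deg_\Gamma(v)=2|E(\Gamma)|\leq 2n$; let $U_1^L=U_1\setminus U_1^H$. First embed $U_1^H$ into an independent set $A^H$ of the bad-pair graph on $A$ of size $|U_1^H|$; this graph has at most $\beta\binom{|A|}{2}$ edges, so a Tur\'an-type bound produces an independent set of size $\Omega(1/\beta)$, enough as long as $D$ is chosen with $D\beta$ small. Then embed the vertices of $U_1^L$ into $A$ one at a time: by Markov's inequality at least $|A|/2$ vertices of $A$ have bad-degree at most $4\beta|A|$, and when placing a light $v$ one has to avoid at most $\deg_\Gamma(v)\leq D$ bad-neighborhoods of previously-placed $\Gamma$-neighbors (of total size at most $4D\beta|A|$) together with the at most $|U_1|\leq n$ already-used vertices---leaving at least one valid choice whenever $4D\beta|A|+n<|A|/2$.

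The main obstacle is the careful balancing of $D$ against the parameters of Lemma~\ref{lem:dependent} so that the two competing constraints---existence of $A^H$ of size $2n/D$, and room for the greedy step for $U_1^L$---hold simultaneously under precisely $N\geq 128\epsilon^{-3}n$. A direct union bound over the $|E(\Gamma)|\leq n$ edges of $\Gamma$ applied to a random injection $U_1\to A$ would only yield a weaker $N=\Omega(n^{4/3}/\epsilon^2)$, so the heavy/light decomposition (and the use of ``good'' low-bad-degree vertices of $A$) is essential. Once $D$ is chosen so that $D\beta$ is bounded by a small constant and the Lemma~\ref{lem:dependent} parameters are tuned so that the $\epsilon^{-3}$ dependence appears naturally (via $t=3$), the two bounds should collapse to the hypothesis $N\geq 128\epsilon^{-3}n$ of the theorem, at which point the greedy construction of $g:U_2\to V_2$ described in the first paragraph completes the embedding of $H$ into $G$.
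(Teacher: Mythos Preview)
Your reduction to embedding the auxiliary graph $\Gamma$ on $U_1$ (the paper's $H'$) into the good-pair graph on $V_1$, followed by a greedy extension to $U_2$, is exactly right and matches the paper. The gap is in the embedding of $\Gamma$: a two-level heavy/light split at a single threshold $D$ cannot be balanced under the linear hypothesis $N\ge 128\epsilon^{-3}n$. Your light step requires $D\beta\lesssim 1$, while the heavy step requires an independent set of size $2n/D$ in the bad-pair graph, which by Tur\'an forces $D\gtrsim n\beta$ once $\beta|A|\ge 1$; together these demand $\beta=O(n^{-1/2})$. But any single application of Lemma~\ref{lem:dependent} with $d=2$, $x=n$, and $t$ small enough to keep $|A|=\Omega(n)$ yields, under $N=\Theta(\epsilon^{-3}n)$, only $\beta=\Theta(\epsilon^3)$ --- a quantity independent of $n$. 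So for fixed $\epsilon$ and $n\to\infty$ no choice of $D$ works. (There is also a smaller, fixable hole: your bound $4D\beta|A|$ in the light step presumes the \emph{heavy} images have bad-degree $\le 4\beta|A|$, which an arbitrary independent set need not guarantee.)

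The paper's remedy is a full multi-scale version of your idea. After passing to a minimum-degree subgraph $V_1'$, it builds nested sets $V_1'=A_0\supset A_1\supset\cdots$ with maximum bad-degree in $A_j$ at most $(\epsilon/8)^j|A_j|$; each $A_j$ comes from $A_{j-1}$ by intersecting with $N(w)$ for a single random $w\in V_2$ (so a bad pair survives with probability $<n/N$ while $|A|$ shrinks only by a factor $\Theta(\epsilon)$), then trimming high-bad-degree vertices. The vertices of $H'$ are embedded in \emph{decreasing} degree order, putting $v_i$ (of degree $\le 2n/i$) into the level $A_j$ with $(\epsilon/8)^j\le i/(4n)$; the key cancellation $(2n/i)\cdot(i/4n)=1/2$ then leaves half of $A_j$ available for $v_i$. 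Your heavy/light dichotomy is the two-level truncation of this scheme; what is missing is the logarithmic depth of scales that makes the degree--density product bounded at every level simultaneously.
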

\begin{proof}
By averaging over all partitions $V=V_1 \cup V_2$ of $G$ with
$|V_1|=|V_2|=N$, we can find a partition with at least $\epsilon
N^2$ edges between $V_1$ and $V_2$. Delete the vertices of $V_1$
with less than $\epsilon N/2$ neighbors in $V_2$, and let $V_1'$
denote the set of remaining vertices of $V_1$. Note that we deleted
at most $\epsilon N^2/2$ edges so between $V'_1$ and $V_2$ there are
still at least $\epsilon N^2/2$ edges. Let $G'$ be the graph with
parts $V'_1, V_2$ and all edges between them. Every vertex in $V_1'$
has degree at least $\epsilon N/2$ in $G'$ and $|V_1'| \geq \epsilon
N/2$.

Let $H'$ be the graph with vertex set $U_1$ such that two vertices
in $U_1$ are adjacent in $H'$ if and only if they have a neighbor in
common. Since $|U_2|+|U_1|=n$, then the number of edges of $H'$ is
at most $n$. Consider an auxiliary graph $G^*$ with vertex set
$V_1'$ such that two vertices of $V_1'$ are adjacent if their common
neighborhood  in $G'$ has cardinality at least $n$. Note that given
an embedding $f: U_1 \rightarrow V_1$ of $H'$ in $G^*$, we can
extend it to an embedding of $H$ in $G'$ as follows. Use $f$ to
embed vertices in $U_1$. Embed vertices in $U_2$ one by one. Suppose
that the current vertex to embed is $u \in U_2$ and let $D$ be the
set of neighbors of $u$ in $U_1$, so $|D|=2$. Then $D$ is an edge in
$H'$ and so $f(D)$ is an edge of $G^*$. Therefore, $f(D)$ has at
least $n$ common neighbors in $G'$. As the total number of vertices
of $H$ embedded so far is less than $n$, one of the common neighbors
of $f(D)$ is still unoccupied and can be used to embed $u$. Thus it
is enough to find a copy of $H'$ in $G^*$.

To do this, we construct a family of nested subsets $V_1'=A_0
\supset A_1 \supset \ldots \supset \ldots$ such that for all $i \geq
1$, $|A_i| \geq \frac{\epsilon}{8}|A_{i-1}|$ and the maximum degree
in the complement of the induced subgraph $G^*[A_i]$ is at most
$(\epsilon/8)^i|A_i|$. Set $c_i=(\epsilon/8)^i$ and let $E_i$ be the
set of edges of $\bar G^*[A_i]$. Then $|E_i| \leq c_i|A_i|^2/2$.

Having already picked $A_1,\ldots,A_{i-1}$ satisfying the above two
desired properties, we show how to pick $A_i$. Let $w$ be a vertex
from $V_2$ chosen uniformly at random. Let $A$ denote the
intersection of $A_{i-1}$ with the neighborhood of $w$, and $X$ be
the random variable denoting the cardinality of $A$. Since every
vertex in $V_1'$ has degree at least $\epsilon N/2$,
$$\mathbb{E}[X]=\sum_{v \in A_{i-1}}\frac{|N_{G'}(v)|}{|V_2|}\geq \frac{\epsilon}{2}|A_{i-1}|.$$

Let $Y$ be the random variable counting the number of pairs in $A$
with fewer than $n$ common neighbors in $V_2$, i.e., $Y$ counts the
number of pairs in $A$ that are not edges of $G^*$. Notice that the
probability that a pair $R$ of vertices of $A_{i-1}$ is in $A$ is at
most $\frac{|N_{G'}(R)|}{|V_2|}$. Recall that $E_{i-1}$ is the set
of all pairs $R$ in $A_{i-1}$ with $|N_{G'}(R)|< n$ (these are edges
of $\bar G^*$) and $|E_{i-1}| \leq c_{i-1}|A_{i-1}|^2/2$.
Therefore, we have
$$\mathbb{E}[Y] < \frac{n}{N}|E_{i-1}| \leq \frac{n}{N}\frac{c_{i-1}}{2}|A_{i-1}|^2.$$
By convexity, $\mathbb{E}[X^2] \geq \mathbb{E}[X]^2$. Thus, using
linearity of expectation, we obtain
 $$\mathbb{E}\left[X^2-\frac{\mathbb{E}[X]^2}{2\mathbb{E}[Y]}\, Y-\mathbb{E}[X]^2/2\right] \geq 0.$$
Therefore, there is a choice of $w$ such that this expression is
nonnegative. Then
$$X^2 \geq \frac{1}{2}\mathbb{E}[X]^2 \geq \frac{\epsilon^2}{8}|A_{i-1}|^2$$ and
$$Y \leq 2\frac{X^2}{\mathbb{E}[X]^2}\mathbb{E}[Y] \leq 4\epsilon^{-2} c_{i-1} \frac{n}{N}X^2
\leq \frac{\epsilon}{16} c_{i-1}\frac{X^2}{2}.$$ From the first
inequality, we have $|A|=X \geq \frac{\epsilon}{4}|A_{i-1}|$ and the
second inequality implies that the average degree in the induced
subgraph $\bar G^*[A]$ is at most $\epsilon c_{i-1}|A|/16$. If $A$
contains a vertex of degree more than $\epsilon c_{i-1}|A|/16$, then
delete it, and continue this process until the remaining induced
subgraph of $\bar G^{*}[A]$ has maximum degree at most $\epsilon
c_{i-1}|A|/16$. Let $A_i$ denote the vertex set of this remaining
induced subgraph. Clearly, the number of deleted edges is at least
$(|A|-|A_i|)\epsilon c_{i-1}|A|/16$. As explained above, the number
of edges of $G^*[A]$ is at most $\epsilon c_{i-1}|A|^2/32$, so we
arrive at the inequality $|A|-|A_i| \leq |A|/2$. Hence, $|A_i| \geq
|A|/2 \geq \frac{\epsilon}{8}|A_{i-1}|$ and the maximum degree in
$\bar G^* [A_i]$ is at most $\frac{\epsilon}{16}c_{i-1}|A| \leq
\frac{\epsilon}{8}c_{i-1}|A_i| = c_i|A_i|$. Therefore, we have shown
how to find the nested family of subsets with the desired
properties.

Label the vertices $\{v_1,\ldots,v_{|U_1|}\}$ of $H'$ in decreasing
order of their degree. Since $H'$ has at most $n$ edges, the degree
of $v_i$ is at most $2n/i$. We will find an embedding $f$ of $H'$ in
$G^*$ which embeds vertices in the order of their index $i$. The
vertex $v_i$ will be embedded in $A_j$ where $j$ is the least
positive integer such that $c_j \leq \frac{i}{4n}$. Since $c_j =
(\frac{\epsilon}{8})^j$, then
$$|A_j| \geq c_j \, |A_0| \geq  c_j \, \frac{\epsilon N}{2} \geq
\frac{\epsilon}{8} \, \frac{i}{4n} \, \frac{\epsilon N}{2} \geq
2i.$$ Assume we have already embedded the vertices $\{v_k;  k <i\}$
and we want to embed $v_i$. Let $N^-(v_i)$ be the set of vertices
$v_k, k<i$ that are adjacent to $v_i$ in $H'$. The maximum degree in
the induced subgraph $\bar G^*[A_j]$ is at most $c_j|A_j| \leq
\frac{i}{4n}|A_j|$. Since $v_i$ has degree at most $\frac{2n}{i}$ in
$H'$, then at least $|A_j|-\frac{2n}{i}\cdot \frac{i}{4n}|A_j| \geq
|A_j|/2$ vertices of $A_j$ are adjacent in $G^*$ to all the vertices
in $f(N^-(v_i))$. Since also $|A_j|/2 \geq i$, then there is a
vertex in $A_j \setminus f(\{v_1,\ldots,v_{i-1}\})$ that is adjacent
in $G^*$ to all the vertices of $f(N^-(v_i))$. Use this vertex to
embed $v_i$ and continue. This gives a copy of $H'$ in $G^*$,
completing the proof.
\end{proof}

\section{Embedding induced subgraphs}\label{embeddinginducedgraphs}

To prove the results stated in Sections \ref{sub:ErdosHajnal} -
\ref{sub:inducedRamsey}, we need the following embedding lemma for
induced subgraphs.

\begin{lemma}\label{inducedembedding}
Let $G$ and $F$ be two edge-disjoint graphs on the same vertex set
$U$ and let $A_1 \supset \ldots \supset A_n$ be vertex subsets of
$U$ with $|A_n| \geq m \geq 2n$ for some positive integers $m$ and
$n$. Suppose that for every $i<n$, all but less than $(2n)^{-2n}{m
\choose n}^2$ pairs $(S_1,S_2)$ of disjoint subsets of $A_{i+1}$
with $|S_1|=|S_2|=n$ have at least $m$ vertices in $A_i$ that are
adjacent to $S_1$ in $G$ and are adjacent to $S_2$ in $F$. Then, for
each graph $H$ with $n$ vertices $V=\{v_1,\ldots,v_n\}$, there is an
embedding $f:V \rightarrow U$ such that for every pair $i<j$,
$(f(v_i),f(v_j))$ is an edge of $G$ if $(v_i,v_j)$ is an edge of
$H$, and $(f(v_i),f(v_j))$ is an edge of $F$ if $(v_i,v_j)$ is not
an edge of $H$.
\end{lemma}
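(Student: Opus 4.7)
The plan is to mirror the iterative embedding strategy of Lemmas \ref{degeneracyhelpful} and \ref{chromatichelpful}. I will define a hierarchy of ``goodness'' on pairs of disjoint subsets of the $A_k$'s, and then embed $v_1,\ldots,v_n$ in order, placing $v_j$ in $A_{n-j+1}$. Since $A_1\supset\cdots\supset A_n$, this nesting guarantees that at the moment $v_j$ is embedded every previously chosen image lies in $A_{n-j+2}$, so the hypothesis applied with $i=n-j+1$ is what will supply the candidate locations in $A_{n-j+1}$.

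Call a pair $(U_1,U_2)$ of disjoint subsets of $A_k$ with $|U_1|,|U_2|\le n$ \emph{$k$-good} if the number of ordered pairs $(S_1,S_2)$ of disjoint $n$-subsets of $A_k$ with $U_1\subset S_1$ and $U_2\subset S_2$ that have fewer than $m$ common $(G,F)$-neighbors in $A_{k-1}$ (vertices $G$-adjacent to $S_1$ and $F$-adjacent to $S_2$) is strictly less than
$$(2n)^{|U_1|+|U_2|-2n}\binom{m}{n-|U_1|}\binom{m}{n-|U_2|}.$$
The hypothesis of the lemma says exactly that $(\emptyset,\emptyset)$ is $k$-good for $2\le k\le n$; at $|U_1|=|U_2|=n$ the threshold collapses to $1$, so $k$-goodness at full size means $(U_1,U_2)$ itself has at least $m$ common $(G,F)$-neighbors in $A_{k-1}$. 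The main combinatorial step is: if $(U_1,U_2)$ is $k$-good with $|U_1|<n$ then fewer than $m/(2n)$ vertices $v\in A_k\setminus(U_1\cup U_2)$ make $(U_1\cup\{v\},U_2)$ fail to be $k$-good, and symmetrically for $U_2$. This follows by the standard double count (cf.\ the ``bad vertex'' arguments in Lemmas \ref{degeneracyhelpful} and \ref{chromatichelpful}): summing the bad-extension bound over $\ge m/(2n)$ hypothetical bad $v$'s, dividing by the overcount factor $n-|U_1|$, and using $\binom{m}{n-|U_1|-1}\ge \binom{m}{n-|U_1|}(n-|U_1|)/m$ recovers the goodness threshold for $(U_1,U_2)$, contradicting its assumed $k$-goodness.

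The embedding is then by induction on $j$. Maintain the invariant that, after step $j-1$, for every $k\ge j$ the pair $(P_k,Q_k)$ of images of the already-embedded neighbors and non-neighbors of $v_k$ is $(n-k+2)$-good. To embed $v_j$, first extend $(P_j,Q_j)$ inside $A_{n-j+2}$ to a pair $(\tilde P_j,\tilde Q_j)$ of sizes $(n,n)$ by adding one vertex at a time, at each step using the main claim to avoid the fewer than $m/(2n)$ bad candidates and the at most $2n-1$ already-chosen ones in an $A_{n-j+2}$ of size $\ge m\ge 2n$. The resulting $(\tilde P_j,\tilde Q_j)$ is still $(n-j+2)$-good at full size, hence has at least $m$ common $(G,F)$-neighbors in $A_{n-j+1}$, and each of those is automatically a common $(G,F)$-neighbor of the original $(P_j,Q_j)$. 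From those $\ge m$ candidates one must still avoid the $j-1$ already-used images and, for each of the $n-j$ future indices $k>j$, the $<m/(2n)$ vertices whose inclusion in $(P_k,Q_k)$ would destroy $(n-k+2)$-goodness; the total is less than $(j-1)+(n-j)\,m/(2n)<m$, so an admissible image for $v_j$ exists and the induction advances.

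The main obstacle is the arithmetic bookkeeping: picking the exponent $|U_1|+|U_2|-2n$ and the binomial factors in the goodness threshold so that the one-vertex extension estimate telescopes cleanly, and then verifying that the tight slack $m\ge 2n$ suffices at every stage both for iteratively extending $(P_j,Q_j)$ to $(\tilde P_j,\tilde Q_j)$ and for the final choice of $f(v_j)$ against all the inductive constraints. All the combinatorial content sits in the single-vertex extension claim, and the rest is a direct application of the iterative-embedding template developed in the previous sections.
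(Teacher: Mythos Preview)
Your proposal is correct and follows essentially the same approach as the paper: the same hierarchy of goodness on pairs $(U_1,U_2)$ with the same threshold $(2n)^{|U_1|+|U_2|-2n}\binom{m}{n-|U_1|}\binom{m}{n-|U_2|}$, the same bad-vertex bound of $m/(2n)$, and the same embedding scheme placing $v_j$ in $A_{n-j+1}$ while maintaining goodness of all future neighbor/non-neighbor pairs. The one cosmetic difference is that to obtain $\ge m$ common $(G,F)$-neighbors of $(P_j,Q_j)$ you explicitly extend to a full-size good pair $(\tilde P_j,\tilde Q_j)$ one vertex at a time, whereas the paper simply observes that a good $(U_1,U_2)$ must be contained in \emph{some} good full-size $(S_1,S_2)$ (since the count of bad extensions is below the threshold), which immediately yields $|N_G(U_1)\cap N_F(U_2)\cap A_i|\ge |N_G(S_1)\cap N_F(S_2)\cap A_i|\ge m$; this shortcut spares you the extension bookkeeping.
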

\begin{proof}
Call a pair $(S_1,S_2)$ of disjoint subsets of $A_{i+1}$ with
$|S_1|=|S_2|=n$ {\it good with respect to $i$} if there are at least
$m$ vertices in $A_i$ that are adjacent to $S_1$ in $G$ and adjacent
to $S_2$ in $F$, otherwise it is {\it bad with respect to $i$}.
Also, call a pair $(U_1,U_2)$ of disjoint subsets of $A_{i+1}$ each
of cardinality at most $n$ {\it good with respect to $i$} if less
than $(2n)^{|U_1|+|U_2|-2n}{m \choose n-|U_1|}{m \choose n-|U_2|}$
pairs $(S_1,S_2)$ of disjoint subsets of $A_{i+1}$ with
$|S_1|=|S_2|=n$, $U_1 \subset S_1$, and $U_2 \subset S_2$ are bad
with respect to $i$, otherwise it is {\it bad with respect to $i$}.
Note that if $(U_1,U_2)$ is good with respect to $i$, then there is
a pair $(S_1,S_2)$ of disjoint subsets of $A_{i+1}$ with
$|S_1|=|S_2|=n$, $U_1 \subset S_1$ and $U_2 \subset S_2$ that is
good with respect to $i$, so
$$|N_G(U_1) \cap N_F(U_2) \cap A_i| \geq |N_G(S_1) \cap N_F(S_2)
\cap A_i| \geq m.$$

For $b \in \{1,2\}$ and a pair $(U_1,U_2)$ of subsets of $A_{i+1}$
that is good with respect to $i$ with $|U_b|<n$, call a vertex $w
\in A_{i+1}$ {\it bad with respect to $(U_1,U_2,b,i)$} if $b=1$ and
$(U_1 \cup \{w\},U_2)$ is bad with respect to $i$, or if $b=2$ and
$(U_1,U_2 \cup \{w\})$ is bad with respect to $i$. For $b \in
\{1,2\}$ and a pair $(U_1,U_2)$ of subsets of $A_{i+1}$ that is good
with respect to $i$, there are less than $\frac{m}{2n}$ vertices $w
\in A_{i+1}$ that are bad with respect to $(U_1,U_2,b,i)$. Indeed,
otherwise the number of pairs $(S_1,S_2)$ of subsets of $A_{i+1}$
each of size $n$ with $U_1 \subset S_1$ and $U_2 \subset S_2$ that
are bad with respect to $i$ is at least
$$\frac{m/(2n)}{n-|U_b|}(2n)^{|U_{3-b}|+|U_b|+1-2n}{m \choose
n-|U_b|-1}{m \choose n-|U_{3-b}|} \geq (2n)^{|U_1|+|U_2|-2n}{m
\choose n-|U_1|}{m \choose n-|U_2|},$$ which contradicts the fact
that $(U_1,U_2)$ is good with respect to $i$.

We next show how to find a copy of $H$ in $G$ such that vertex pairs
in this copy corresponding to nonedges of $H$ are edges of $F$. We
embed the vertices of $H$ one by one in the increasing order of
their index. Let $L_h=\{v_1,\ldots,v_{h}\}$. For a vertex $v_j$, let
$N^-(v_j)$ denote the vertices $v_i$ adjacent to $v_j$ with $i<j$
and $N^+(v_j)$ denote the vertices $v_i$ adjacent to $v_j$ with
$i>j$. We use induction on $h$ to construct the embedding $f$ of $H$
 such that $f(v_j) \in A_{n-j+1}$ for all $j$, and for every
$v_j$ and $h<j$, the pair $\left(f(L_h \cap N^-(v_j)),f(L_h
\setminus N^-(v_j))\right)$ is good with respect to $n-j+1$.

The induction hypothesis is that we have already embedded $L_{h-1}$
and for every $v_j$, the pair $\left(f(L_{h-1} \cap
N^-(v_j)),f(L_{h-1} \setminus N^-(v_j))\right)$ is good with respect
to $n-j+1$. In the base case $h=1$, the induction hypothesis holds
since our definition implies that the pair $(\emptyset,\emptyset)$
is good with respect to $j$ for every $j$. Since the sets are
nested, we have $f(L_{h-1}) \subset A_{k}$ for any $k \leq n-h+2$.
We need to show how to pick $f(v_h) \in V_{n-h+1}$ that is not
already occupied and satisfies
\begin{itemize}
\item $f(v_h)$ is adjacent to $f(N^-(v_h))$ in $G$ and adjacent to $f(L_{h-1} \setminus N^-(v_h))$ in $F$,
\item for each vertex $v_j \in N^+(v_h)$, $f(v_h)$ is
not bad with respect to $$\left(f(N^-(v_j) \cap L_{h-1}),f(L_{h-1}
\setminus N^-(v_j)),1,n-j+1\right),~\textrm{and}$$
\item for each vertex $v_j \not \in N^+(v_h)$ with $j>h$, $f(v_h)$ is
not bad with respect to $$\left(f(N^-(v_j) \cap L_{h-1}),f(L_{h-1}
\setminus N^-(v_j)),2,n-j+1\right).$$
\end{itemize}

Since $\left(L_{h-1} \cap f(N^-(v_h)),f(L_{h-1} \setminus
N^-(v_h))\right)$ is good with respect to $n-h+1$, then there are at
least $m$ vertices in $A_{n-h+1}$ that are adjacent to every vertex
of $f(N^-(v_h))=f(L_{h-1} \cap N^-(v_h))$ in $G$ and are adjacent to
every vertex of $f(L_{h-1} \setminus N^-(v_h))$ in $F$. For each
$v_j \in N^+(v_h)$, there are less than $\frac{m}{2n}$ vertices of
$A_{n-h+1}$ that are bad with respect to $\left(f(N^-(v_j) \cap
L_{h-1}),f(L_{h-1} \setminus N^-(v_j)),1,n-j+1\right).$ Also, for
each $v_j \not \in N^+(v_h)$ with $j>h$, there are less than
$\frac{m}{2n}$ vertices of $A_{n-h+1}$ that are bad with respect to
$\left(f(N^-(v_j) \cap L_{h-1}),f( L_{h-1} \setminus
N^-(v_j)),2,n-j+1\right)$. Since the number of $v_j$ with $j>h$ is
$n-h$ and the number of already occupied vertices is $h-1$, then
there are at least
$$m-(n-h)\frac{m}{2n}-(h-1) > m/2-(n-1) \geq 1$$ unoccupied vertices to choose
for $f(v_h) \in A_{n-h+1}$ satisfying the above three desired
properties, which, by induction on $h$, completes the proof.
\end{proof}

A graph is {\it $n$-universal} if it contains all graphs on $n$
vertices as induced subgraphs. For the proofs of Theorems
\ref{halferdoshajnal} - \ref{bipartitedisjoint} and Corollary
\ref{cor:drawing}, we need the special case $F=\bar G$ of the above
lemma, which is stated below.

\begin{corollary}\label{cor:inducedembed}
Let $m$ and $n$ be positive integers and let $A_1 \supset \ldots
\supset A_n$ be vertex subsets of a graph $G$ with $|A_n| \geq m
\geq 2n$. If for all $i < n$, all but less than $(2n)^{-2n}{m
\choose n}^2$ pairs $(S_1,S_2)$ of disjoint subsets of $A_{i+1}$
with $|S_1|=|S_2|=n$ have at least $m$ vertices in $A_i$ that are
adjacent to all vertices in $S_1$ and no vertices in $S_2$, then
graph $G$ is $n$-universal.
\end{corollary}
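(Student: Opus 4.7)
The plan is to deduce this corollary immediately from Lemma \ref{inducedembedding} by taking $F$ to be the complement $\bar G$ of $G$. The two graphs $G$ and $\bar G$ share the same vertex set $U = V(G)$ and are automatically edge-disjoint, so the setup of the lemma applies. Under this choice, ``adjacent in $F$'' is synonymous with ``non-adjacent in $G$,'' and therefore the hypothesis of the corollary---that for each $i<n$, all but fewer than $(2n)^{-2n}\binom{m}{n}^2$ pairs $(S_1,S_2)$ of disjoint $n$-subsets of $A_{i+1}$ have at least $m$ vertices in $A_i$ adjacent to every vertex of $S_1$ and to no vertex of $S_2$ in $G$---matches verbatim the hypothesis of Lemma \ref{inducedembedding} with $F=\bar G$.

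Given any graph $H$ on $n$ vertices $\{v_1,\dots,v_n\}$, I would then invoke Lemma \ref{inducedembedding} with this $H$, with $F=\bar G$, and with the nested subsets $A_1 \supset \cdots \supset A_n$ supplied by the corollary. The lemma yields a map $f:V(H)\to U$ such that whenever $(v_i,v_j)$ is an edge of $H$, $(f(v_i),f(v_j))$ is an edge of $G$, and whenever $(v_i,v_j)$ is a non-edge of $H$, $(f(v_i),f(v_j))$ is an edge of $F=\bar G$, i.e., a non-edge of $G$. This is exactly an induced copy of $H$ in $G$. Since $H$ was an arbitrary $n$-vertex graph, $G$ is $n$-universal, which is the claim.

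In short, there is no substantive new obstacle: the corollary is simply the $F=\bar G$ specialization of Lemma \ref{inducedembedding}, and all the real work---the ``good/bad pair'' bookkeeping and the stepwise greedy embedding that respects both $G$- and $\bar G$-adjacencies simultaneously---has already been carried out in the proof of that lemma. The only thing to check is that the numerical hypothesis and the ``edge-disjoint on the same vertex set'' requirement translate correctly, and both are immediate.
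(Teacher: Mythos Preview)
Your proposal is correct and matches the paper's own treatment exactly: the paper introduces this corollary as ``the special case $F=\bar G$ of the above lemma,'' with no further argument. Your verification that the hypotheses translate verbatim and that the resulting embedding is an induced copy is precisely what is needed.
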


\section{Ramsey-type results for $H$-free graphs}\label{erdoshajnal1}

The purpose of this section is to prove Theorems
\ref{halferdoshajnal} and \ref{offdiagonal} which are related to the
Erd\H{o}s-Hajnal conjecture. We first give an overview of the proofs
before jumping into the details.

Lemma \ref{lemmaforhalf} below demonstrates that for a (large
enough) graph $G$ that is not too sparse and does not contain a pair
of large subsets with edge density almost $1$ between them, there is
a large vertex subset $A$ with the property that almost all pairs
$(S_1,S_2)$ of disjoint subsets of $A$ of size $n$ satisfy that
$|N_G(S_1) \cap N_{\bar G}(S_2)|$ is large. The first step in the
proof of Lemma \ref{lemmaforhalf} uses Lemma \ref{lem:dependent} to
get a large subset $A$ for which almost all vertex subsets $S_1$ of
size $n$ have large common neighborhood. Using the fact that $G$
does not contain a pair of large subsets with edge density almost
$1$ between them, we show that for almost all pairs $(S_1,S_2)$ of
subsets of $A$ of size $n$, $|N_G(S_1) \cap N_{\bar G}(S_2)|$ is
large.

By repeated application of Lemma \ref{lemmaforhalf} and an
application of Corollary \ref{cor:inducedembed}, we arrive at Lemma
\ref{offdiagonalhelpful1}, which says that every graph is
$n$-universal, or contains a large independent set, or has two large
subsets with edge density almost $1$ between them. The deductions of
Theorems \ref{halferdoshajnal} and \ref{offdiagonal} from Lemma
\ref{offdiagonalhelpful1} are relatively straightforward.

\begin{lemma}\label{lemmaforhalf}
Suppose $z$ is a positive integer, $\beta,\epsilon>0$, and $G=(V,E)$
is a graph on $N$ vertices and at least $\beta {N \choose 2}$ edges
such that for each pair $(W_1,W_2)$ of disjoint subsets of $V$ each
of cardinality at least $z$, there is a vertex in $W_1$ with less
than $(1-2\epsilon)|W_2|$ neighbors in $W_2$. If $2 \leq n \leq z$
and $m$ satisfy
$$4nz^{1/2n}N^{1-1/2n} \leq m \leq \frac{\beta^2 \epsilon^{2n} N}{16n},$$
then there is a subset $A \subset V$ with $|A| \geq
\frac{1}{4}\beta^{4n}N$ such that all but less than $(2n)^{-2n}{m
\choose n}^2$ pairs $(S_1,S_2)$ of disjoint subsets of $A$ with
$|S_1|=|S_2|=n$ have at least $m$ vertices of $G$ adjacent to every
vertex in $S_1$ and no vertex in $S_2$.
\end{lemma}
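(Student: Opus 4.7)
\medskip
\noindent\textbf{Proof plan.} The proof will combine a single application of dependent random choice (Lemma~\ref{lem:dependent}) to control the $S_1$ side with an iterated use of the density hypothesis to control $S_2$ for each good $S_1$.

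First, fix a balanced bipartition $V = V_1 \cup V_2$ with $|V_1|=|V_2|=N/2$ having at least $\beta(N/2)^2$ crossing $G$-edges (averaging guarantees such a partition exists). Apply Lemma~\ref{lem:dependent} to this bipartite graph with parameters $t=4n$, $a=1$, $d=n$, edge density $\beta$, and threshold $x^\star := m/(2\epsilon)^n$. This yields a subset $A \subset V_2$ of size $|A| \geq \frac{1}{4}\beta^{4n}N$, together with a quantitative bound on the number of ``bad'' $n$-subsets $S_1 \subset A$ (those with $|N_G(S_1) \cap V_1| < x^\star$). A short computation shows that the upper bound $m \leq \beta^2 \epsilon^{2n} N/(16n)$, which keeps $x^\star$ small, is precisely what is needed so that these bad $S_1$'s---paired with arbitrary disjoint $S_2$---contribute less than $\frac{1}{2}(2n)^{-2n}\binom{m}{n}^2$ to the bad-pair count.

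Now fix a good $S_1$ and set $W := N_G(S_1) \cap V_1$, so $|W| \geq x^\star$. The key consequence of the density hypothesis I will use is: for any $U \subset V$ with $|U| \geq z$, at most $z-1$ vertices outside $U$ can have $\geq (1-2\epsilon)|U|$ $G$-neighbors in $U$ (otherwise these vertices together with $U$ form a pair violating the hypothesis). Consider a uniformly random ordering $(v_1,\ldots,v_n)$ of a uniformly random $n$-subset $S_2 \subset A$ and define the descending chain $W_0 := W$, $W_i := N_{\bar G}(v_i) \cap W_{i-1}$. As long as $|W_{i-1}| \geq z$, at most $z-1$ of the $|A|-i+1$ remaining choices for $v_i$ fail the condition $|W_i| \geq 2\epsilon |W_{i-1}|$, and inductively $|W_i| \geq (2\epsilon)^i |W|$. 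The lower bound $m \geq 4n z^{1/(2n)} N^{1-1/(2n)}$, together with $x^\star = m/(2\epsilon)^n$, guarantees both that $|W_{i-1}| \geq z$ at every step and that $|W_n| \geq (2\epsilon)^n x^\star = m$ whenever every step succeeds. A union bound over the $n$ steps shows that at most a fraction $n(z-1)/(|A|-n+1)$ of ordered $n$-tuples fail; since each bad (unordered) $S_2$ produces only failing orderings, the number of bad $S_2$'s for this particular $S_1$ is at most $\binom{|A|}{n}\cdot nz/(|A|-n+1)$.

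Summing this bound over the at most $\binom{|A|}{n}$ good $S_1$'s gives at most $\binom{|A|}{n}^2 \cdot 2nz/|A|$ bad pairs from this branch (using $|A| \geq 2n$). A routine calculation using $|A| \leq N/2$, the estimate $\binom{m}{n} \geq (m/2)^n/n!$, and the elementary bound $(4n)^{1/(2n)} \leq 2$ for $n \geq 2$ confirms that this quantity is at most $\frac{1}{2}(2n)^{-2n}\binom{m}{n}^2$ precisely under the hypothesis $m \geq 4nz^{1/(2n)}N^{1-1/(2n)}$; combining with the bad-$S_1$ contribution from the first step yields the desired total bound. The main technical challenge is the iterative argument above: one must verify that the descending chain $W_0 \supset W_1 \supset \cdots \supset W_n$ never falls below $z$ (which is exactly what the lower bound on $m$ provides) and that the translation from a small fraction of failing ordered tuples to a bound on bad unordered $S_2$'s is sharp enough to match the target $(2n)^{-2n}\binom{m}{n}^2$, which is why the two numerical hypotheses on $m$ appear together in the statement.
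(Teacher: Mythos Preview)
Your proposal is correct and follows essentially the same approach as the paper: apply Lemma~\ref{lem:dependent} with $t=4n$, $a=1$, $d=n$ to obtain $A$ and control the bad-$S_1$ contribution via the upper bound on $m$, then for each good $S_1$ run an $n$-step iteration shrinking the common neighborhood by a constant factor at each step, with the hypothesis bounding the number of bad choices for each $u_i$; finally translate ``few failing ordered tuples'' into ``few bad unordered $S_2$'' and use the lower bound on $m$ to absorb the resulting $z|A|^{2n-1}$ term.

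The only differences are cosmetic. The paper takes threshold $x=m\epsilon^{-n}$ with decay factor $\epsilon$ per step, while you take $x^\star=m(2\epsilon)^{-n}$ with decay factor $2\epsilon$; this is because you apply the density hypothesis directly (the candidate $v_i\in A\subset V_2$ and the shrinking set $W_{i-1}\subset V_1$ are automatically disjoint, so at most $z-1$ choices fail), whereas the paper first proves an auxiliary $2z$/$(1-\epsilon)$ observation that handles possibly overlapping sets. Your direct route is slightly cleaner but yields the same bounds. One small slip: when building $S_2$ you should sample from $A\setminus S_1$ (size $|A|-n$), not $A$, but since $|A|\geq 2n$ this costs only a constant factor and does not affect the final inequality.
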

\begin{proof}
By averaging over all partitions $V=V_1 \cup V_2$ of $G$ with
$|V_1|=|V_2|=N/2$, we can find a partition with at least $\beta
(N/2)^2$ edges between $V_1$ and $V_2$. By Lemma \ref{lem:dependent}
with $a=1$, $t=4n$, $d=n$, and $x=\epsilon^{-n}m$, there is a subset
$A \subset V_2$ with cardinality at least
$\frac{1}{2}\beta^{4n}|V_2|=\frac{1}{4}\beta^{4n}N$ such that for
all but at most
\begin{equation}\label{eq:7.1}
2\beta^{-4n}\left(\frac{x}{N/2}\right)^{4n}\left(\frac{|A|}{N/2}\right){N/2
\choose n} \leq \left(\frac{2x}{\beta N}\right)^{4n}\frac{N^n}{n!}
\end{equation} subsets $S_1$ of $A$ of size $n$, we have $|N_G(S_1)| \geq x$.

If $G$ contains (not necessarily disjoint) subsets $B_1,B_2$ each of
cardinality at least $2z$ such that every vertex in $B_1$ is
adjacent to at least $(1-\epsilon)|B_2|$ vertices in $B_2$, then
letting $W_1$ be any $z$ vertices of $B_1$ and $W_2=B_2 \setminus
W_1$, we have a contradiction with the hypothesis of the lemma.
Indeed, $|W_2|\geq |B_2|/2 \geq z$ and every vertex of $W_1$ is
adjacent to at least $|W_2|-\epsilon|B_2| \geq (1-2\epsilon)|W_2|$
vertices in $W_2$.

Let $S_1$ be a subset of $A$ of cardinality $n$ with $|N_G(S_1)|
\geq x$. We will show that almost all subsets $S_2$ of $A$ of
cardinality $n$ satisfy $|N_{G}(S_1) \cap N_{\bar G}(S_2)| \geq m$.
The number of vertices $u_1$ of $A$ such that
$$|N_{G}(S_1) \cap N_{\bar G}(u_1)| < \epsilon|N_{G}(S_1)|$$ is at most $2z$,
otherwise each of these at least $2z$ vertices has at least
$(1-\epsilon)|N_{G}(S_1)|$ neighbors in $N_G(S_1)$, which by the
above discussion would contradict the hypothesis of the lemma. Pick
any vertex $u_1 \in A$ such that $$|N_{G}(S_1) \cap N_{\bar G}(u_1)|
\geq \epsilon|N_{G}(S_1)|.$$ After picking $u_1,\ldots,u_i$ such
that
$$|N_{G}(S_1) \cap N_{\bar G}(\{u_1,\ldots,u_i\})| \geq
\epsilon^{i}|N_{G}(S_1)|,$$ again there are at most $2z$ vertices
$u_{i+1}$ such that
$$|N_{G}(S_1) \cap N_{\bar G}(\{u_1,\ldots,u_i,u_{i+1}\})| <
\epsilon^{i+1}|N_{G}(S_1)|,$$ otherwise each of these at least $2z$
vertices has at least $(1-\epsilon)|N_{G}(S_1) \cap N_{\bar
G}(\{u_1,\ldots,u_i\})|$ neighbors in $N_{G}(S_1) \cap N_{\bar
G}(\{u_1,\ldots,u_i\})$, which by the above discussion would
contradict the hypothesis of the lemma. Note that during this
process for every index $i$ there are at least
$|A|-|S_1|-(i-1)-2z>|A|-2z-2n$ choices for $u_i \in A \setminus S_1$
not already chosen. Therefore, given $S_1$ with $|N_G(S_1)| \geq x$,
we conclude that the number of ordered $n$-tuples $(u_1,\ldots,u_n)$
of distinct vertices of $A\setminus S_1$ with
$$|N_G(S_1) \cap N_{\bar G}(\{u_1,\ldots,u_n\})| \geq \epsilon^{n}x=m$$ is at least
$$(|A|-2z-2n)^n \geq (|A|-4z)^n \geq |A|^n-4nz|A|^{n-1}.$$
Hence, the number of (unordered) subsets $S_2=\{u_1,\ldots,u_n\}$ of
$A \setminus S_1$ with $|N_{G}(S_1) \cap N_{\bar G}(S_2)| < m$ is at
most $4zn|A|^{n-1}/n!$. This implies that the number of disjoint
pairs $S_1,S_2 \subset A$ with $|S_1|=|S_2|=n$, $|N_G(S_1)|\geq x$,
and $|N_{G}(S_1) \cap N_{\bar G}(S_2)| < m$ is at most ${|A| \choose
n} \cdot \frac{1}{n!}4zn|A|^{n-1}$. Also, notice that by
(\ref{eq:7.1}) the number of disjoint pairs $S_1,S_2 \subset A$ with
$|S_1|=|S_2|=n$ and $|N_G(S_1)|<x$ is at most $\left(\frac{2x}{\beta
N}\right)^{4n}\frac{N^n}{n!} \cdot {|A| \choose n}$.

Therefore, the number of pairs of disjoint subsets $S_1,S_2 \subset
A$ with $|N_{G}(S_1) \cap N_{\bar G}(S_2)| < m$ and $|S_1|=|S_2|=n$
is at most
\begin{equation}\label{ineqind1} \left(\frac{2x}{\beta
N}\right)^{4n}\frac{N^n}{n!}{|A| \choose n}+{|A| \choose
n}\frac{1}{n!}4zn|A|^{n-1}.\end{equation} Using the upper bound on
$m$ and $|A| \leq N/2$, we have
\begin{eqnarray}
\label{ineqind2} \left(\frac{2x}{\beta
N}\right)^{4n}\frac{N^n}{n!}{|A| \choose n} &\leq&
2^{3n}\left(\frac{\epsilon^{-n}m}{\beta
N}\right)^{4n}\frac{N^{2n}}{n!^2} =
2^{3n}\beta^{-4n}\epsilon^{-4n^2}\left(\frac{m}{N}\right)^{2n}\frac{m^{2n}}{n!^2}\nonumber\\
&\leq&
2^{3n}\beta^{-4n}\epsilon^{-4n^2}\left(\frac{\beta^2\epsilon^{2n}}{16n}\right)^{2n}\frac{m^{2n}}{n!^2}=2^{-5n}n^{-2n}\frac{m^{2n}}{n!^2}
\nonumber \\ &<& \frac{1}{2}(2n)^{-2n}{m \choose n}^2.
\end{eqnarray}
Using the lower bound on $m$ and $|A| \leq N/2$, we have
\begin{eqnarray}\label{ineqind3}
{|A| \choose n}\frac{1}{n!}4zn|A|^{n-1} &\leq& n!^{-2}4zn|A|^{2n-1}
\leq 2^{3-2n}n!^{-2}nzN^{2n-1} \leq
2^{3-2n}n!^{-2}n\left(\frac{m}{4n}\right)^{2n} \nonumber\\ &\leq&
n2^{3-4n}(2n)^{-2n}\frac{m^{2n}}{n!^2} \leq \frac{1}{2}(2n)^{-2n}{m
\choose n}^2.
\end{eqnarray}

Combining (\ref{ineqind1}), (\ref{ineqind2}), and (\ref{ineqind3}),
we have that there are less than $(2n)^{-2n}{m \choose n}^2$ pairs
of disjoint subsets $S_1,S_2 \subset A$ with $|S_1|=|S_2|=n$ and
$|N_{G}(S_1) \cap N_{\bar G}(S_2)| < m$, completing the proof.
\end{proof}

\vspace{0.1cm} \noindent The next lemma follows from repeated
application of Lemma \ref{lemmaforhalf} and an application of
Corollary \ref{cor:inducedembed}.

\begin{lemma}\label{offdiagonalhelpful1}
Let $\epsilon>0$, $H$ be a graph on $n \geq 3$ vertices and
$G=(V,E)$ be an $H$-free graph with $N$ vertices and no independent
set of size $t$ with $N \geq (4t)^{8n^3}\epsilon^{-4n^2}n$. Then
there is a pair $W_1,W_2$ of disjoint subsets of $V$ such that
$|W_1|,|W_2| \geq (4t)^{-8n^3}\epsilon^{4n^2}N$ and every vertex in
$W_1$ is adjacent to all but at most $2\epsilon|W_{2}|$ vertices of
$W_{2}$.
\end{lemma}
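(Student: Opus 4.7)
\medskip
\noindent\textbf{Proof plan.} Assume for contradiction that the conclusion fails, and set $z := (4t)^{-8n^3}\epsilon^{4n^2}N$. Then the negation of the conclusion is exactly the hypothesis of Lemma \ref{lemmaforhalf}: for every disjoint pair $W_1,W_2\subset V$ with $|W_1|,|W_2|\ge z$, some vertex of $W_1$ has fewer than $(1-2\epsilon)|W_2|$ neighbors in $W_2$. Moreover, since $\alpha(G)<t$, the inequality $\alpha(G)\ge N/(1+2|E|/N)$ forces $G$ to have edge density at least some $\beta=\beta(t)\gtrsim 1/t$; this density lower bound and the no-dense-pair hypothesis at threshold $z$ are both inherited by every induced subgraph of $G$ on enough vertices.

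The plan is to iterate Lemma \ref{lemmaforhalf} exactly $n$ times, producing a nested chain
\[
V=V_0\supset V_1\supset\cdots\supset V_n,\qquad |V_{i+1}|\ge \tfrac14\beta^{4n}|V_i|,
\]
such that for each $i<n$, outside of fewer than $(2n)^{-2n}\binom{m}{n}^2$ exceptional pairs, every disjoint pair $(S_1,S_2)$ of $n$-subsets of $V_{i+1}$ has at least $m$ vertices in $V_i$ adjacent to every vertex of $S_1$ and to no vertex of $S_2$. At step $i$ I would apply Lemma \ref{lemmaforhalf} to the induced subgraph $G[V_i]$ (using the same $\beta$, $\epsilon$, $z$, and $n$, which all survive restriction), with a single value of $m$ chosen once at the outset to lie in the intersection
\[
4n\,z^{1/(2n)}N^{1-1/(2n)}\;\le\;m\;\le\;\frac{\beta^2\epsilon^{2n}|V_{n-1}|}{16n}
\]
of the admissibility intervals for all $n$ applications, and also satisfying $2n\le m\le |V_n|$.

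Once this chain is built, I would set $A_i:=V_i$ for $i=1,\ldots,n$, so that $A_1\supset\cdots\supset A_n$ with $|A_n|\ge m\ge 2n$ and, for each $i<n$, all but fewer than $(2n)^{-2n}\binom{m}{n}^2$ disjoint $n$-subset pairs of $A_{i+1}$ have $\ge m$ common good neighbors in $A_i$. By Corollary \ref{cor:inducedembed}, $G$ is then $n$-universal and in particular contains an induced copy of $H$, contradicting the $H$-free hypothesis.

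The main obstacle I expect is the arithmetic calibration: one must verify that the hypothesis $N\ge (4t)^{8n^3}\epsilon^{-4n^2}n$ makes the admissibility interval for $m$ nonempty at every one of the $n$ steps and simultaneously forces $|V_n|\ge m\ge 2n$. Since $|V_n|\ge\bigl(\tfrac14\beta^{4n}\bigr)^n N$ costs roughly a factor $(4t)^{-4n^2}$, and the lower endpoint $4n\,z^{1/(2n)}N^{1-1/(2n)}$ involves $z^{1/(2n)}$, which on substituting $z=(4t)^{-8n^3}\epsilon^{4n^2}N$ pulls out a factor $(4t)^{-4n^2}\epsilon^{2n}$, the exponents $8n^3$ and $4n^2$ in the hypothesis are precisely the thresholds at which these inequalities close. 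Apart from this bookkeeping, the proof is a direct contrapositive combining the two earlier tools of this section.
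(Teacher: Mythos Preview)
Your proposal is correct and follows essentially the same approach as the paper: assume the conclusion fails so that the no-dense-pair hypothesis of Lemma~\ref{lemmaforhalf} holds at threshold $z=(4t)^{-8n^3}\epsilon^{4n^2}N$, use the independence-number bound (via Tur\'an) to get edge density $\beta\sim 1/(2t)$ on every large induced subgraph, iterate Lemma~\ref{lemmaforhalf} with a single $m$ chosen to satisfy all admissibility constraints, and feed the resulting nested chain into Corollary~\ref{cor:inducedembed} to contradict $H$-freeness. The only cosmetic discrepancy is that the paper iterates $n-1$ times (taking $A_1=V$), whereas your indexing runs $n$ iterations and then sets $A_i=V_i$, discarding the property between $V_0$ and $V_1$; this wastes one step but changes nothing structurally, and your identification of the arithmetic calibration as the sole remaining work matches the paper's treatment.
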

\begin{proof}
Since $G$ has no independent set of size $t$, then by Tur\'an's
theorem (see, e.g., \cite{AlSp},\cite{Di}) every induced subgraph of
$G$ with $v \geq t^2$ vertices has at least $\frac{v^2}{2t}$ edges.
Let $z=(4t)^{-8n^3}\epsilon^{4n^2}N$, so $z \geq n$.

Suppose for contradiction that there are no disjoint subsets
$W_1,W_2$ with $|W_1|,|W_2| \geq z$ and every vertex in $W_1$
adjacent to all but at most $2\epsilon|W_2|$ vertices of $W_2$. Fix
$\beta=\frac{1}{2t}$ and
$$m=\frac{\beta^2\epsilon^{2n}(\frac{1}{4}\beta^{4n})^{n-1}N}{16n},$$
and repeatedly apply Lemma \ref{lemmaforhalf} $n-1$ times (note that
the choice of parameters allows this). We get a family of nested
subsets $V=A_1 \supset \ldots \supset A_{n}$ such that $|A_n| \geq
\left(\frac{1}{4}\beta^{4n}\right)^{n-1}N \geq m \geq 2n$ and for $1
\leq i \leq n-1$, all but less than $(2n)^{-2n}{m \choose n}^2$
pairs $(S_1,S_2)$ of disjoint subsets of $A_{i+1}$ with
$|S_1|=|S_2|=n$ have at least $m$ vertices in $A_i$ in the common
neighborhood of $S_1$ in $G$ and the common neighborhood of $S_2$ in
$\bar G$. By Corollary \ref{cor:inducedembed}, $G$ contains $H$ as
an induced subgraph, contradicting the assumption that $G$ is
$H$-free, and completing the proof.
\end{proof}

From Lemma \ref{offdiagonalhelpful1}, we quickly deduce Theorem
\ref{halferdoshajnal}, which says that for every $H$ there is
$c=c(H)>0$ such that any $H$-free graph of order $N$ contains a
complete bipartite graph with parts of size $N^{c}$ or an
independent set of size $N^{c}$.

\vspace{0.2cm}\noindent {\bf Proof of Theorem
\ref{halferdoshajnal}:} Let $H$ be a graph on $n$ vertices, $G$ be a
$H$-free graph on $N$ vertices, and
$t=\frac{1}{10}N^{\frac{1}{10n^3}}$. If $G$ has no independent set
of size $t$, then by Lemma \ref{offdiagonalhelpful1} with
$\epsilon=\frac{1}{4t}$, $G$ must contain disjoint subsets $W_1$ and
$W_2$ each of cardinality at least $2t$ such that every vertex of
$W_1$ is adjacent to all but at most $\frac{1}{2t}|W_2|$ vertices in
$W_2$. Picking $t$ vertices in $W_1$ and their common neighborhood
in $W_2$, which has size at least $|W_2|-t\frac{1}{2t}|W_2| \geq
|W_2|/2 \geq t$, shows that $G$ contains $K_{t,t}$ and completes the
proof. \qed

\vspace{0.1cm} We are now ready to prove Theorem \ref{offdiagonal},
which says that for every $H$-free graph $G$ of order $N$ and
$n_1,n_2$ satisfying $\log n_1 \log n_2 \leq c(H)\log N$, $G$
contains a clique of size $n_1$ or an independent set of size $n_2$.
For a graph $G$, the clique number is the order of the largest
complete subgraph of $G$ and the independence number is the order of
the largest independent set of $G$. Let $\omega_{t,n}(N)$ be the
minimum clique number over all graphs with $N$ vertices and
independence number less than $t$ that are not $n$-universal.

\vspace{0.15cm}\noindent {\bf Proof of Theorem \ref{offdiagonal}:}
Let $t=n_2$, $H$ be a graph on $n$ vertices, and $G$ be an $H$-free
graph with $N$ vertices, no independent set of size $n_2$, and
clique number $\omega_{t,n}(N)<n_1$. Since $\bar G$ is $\bar
H$-free, we may suppose without loss of generality that $n_2 \geq
n_1$. By Lemma \ref{offdiagonalhelpful1} with $t=n_2$ and
$\epsilon=\frac{1}{4t}$, there are disjoint subsets $W_1$ and $W_2$
of $V$, each of size at least $(4t)^{-8n^3-4n^2}N \geq
(4t)^{-10n^3}N$, such that every vertex in $W_1$ is adjacent to all
but at most $\frac{1}{2t}|W_2|$ vertices in $W_2$. Pick a largest
clique $X$ in $W_1$. The cardinality of clique $X$ is less than $n_1
\leq n_2=t$ by assumption. So $|X| < t$ and at least half of the
vertices of $W_2$ are adjacent to $X$. Pick a largest clique $Y$ in
the vertices of $W_2$ adjacent to $X$. The clique number of $G$ is
at least $|X|+|Y|$. Hence,
$$\omega_{t,n}(N) \geq \omega_{t,n}(|W_1|)+\omega_{t,n}(|W_2|/2) \geq 2\omega_{t,n}((8t)^{-10n^3}N).$$
Let $d$ be the largest integer such that $N \geq (8t)^{10n^3d}$, so
$d+1 \geq \frac{1}{10n^3}\frac{\log N}{\log 8t}$. We have
$\omega_{t,n}(N) \geq 2^d$ by repeated application of the inequality
above. Hence,
$$\log n_1 \log n_2 \geq \log \omega_{t,n}(N) \log t \geq d\log t
\geq \frac{1}{20n^3}\frac{\log N}{\log 8t} \log t \geq
\frac{1}{80n^3} \log N,$$ completing the proof. \qed

\section{Edge distribution of $H$-free graphs}

As we already mentioned in the introduction, there are several
results which show that the edge distribution of $H$-free graphs is
far from being uniform. One such result, obtained by R\"odl, says
that for every graph $H$ and $\epsilon \in (0,1/2)$, there is a
positive constant $\delta=\delta(\epsilon,H)$ such that any $H$-free
graph on $N$ vertices contains an induced subgraph on at least
$\delta N$ vertices with edge density either at most $\epsilon$ or
at least $1-\epsilon$. In \cite{FoSu}, we gave an alternative proof
which gives a much better bound on $\delta(\epsilon,H)$. Combining
our techniques with the approach of \cite{FoSu}, we obtain a
generalization of R\"odl's theorem which shows that a seemingly weak
edge density condition is sufficient for an $H$-free graph to
contain a very dense linear-sized induced subgraph. For $\delta \in
(0,1]$ and a monotone increasing function $\beta:(0,1] \rightarrow
(0,1]$, we call a graph on $N$ vertices {\it $(\beta,\delta)$-dense}
if every induced subgraph on $\sigma N$ vertices has edge density at
least $\beta(\sigma)$ for $\sigma \geq \delta$.

\begin{theorem}\label{largedensity}
For each monotone increasing function $\beta:(0,1] \rightarrow
(0,1]$, $\epsilon >0$, and graph $H$, there is
$\delta=\delta(\beta,\epsilon,H)>0$ such that every
$(\beta,\delta)$-dense $H$-free graph on $n$ vertices contains an
induced subgraph on at least $\delta n$ vertices with edge density
at least $1-\epsilon$.
\end{theorem}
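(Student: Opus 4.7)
My plan is to iterate Lemma \ref{offdiagonalhelpful1} inside $G$, using the $(\beta,\delta)$-dense hypothesis at every stage to keep the required ``no large independent set'' hypothesis alive. The first and simplest observation is that the density condition automatically forbids independent sets of size $\lceil \delta n \rceil$: such a set would be an induced subgraph of density $0$, contradicting the required density $\beta(\delta) > 0$. So from the outset we may assume the independence number of $G$ is less than $\delta n$, which is exactly the hypothesis needed to apply Lemma \ref{offdiagonalhelpful1}.

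Next I would choose an auxiliary parameter $\epsilon_1 \ll \epsilon$ (of the form $\epsilon/(2K)$ for an integer $K$ determined at the end) and apply Lemma \ref{offdiagonalhelpful1} with parameter $\epsilon_1$ and $t=\delta n$. This produces disjoint $W_1, W_2$ of linear size such that every $v \in W_1$ misses at most $2\epsilon_1 |W_2|$ vertices of $W_2$. The plan is then to use the bipartite near-completeness to upgrade to a near-complete induced subgraph. Concretely, I would choose witnesses $v_1, \ldots, v_K \in W_1$ greedily, setting $A_i := A_{i-1} \cap N(v_i)$ with $A_0 = W_2$. Since each $v_i$ omits at most $2\epsilon_1 |W_2|$ vertices of $W_2$, a union bound gives $|A_K| \geq (1 - 2K\epsilon_1)|W_2| \geq |W_2|/2$, so $|A_K|$ is still linear in $n$. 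At each step the induced graph $G[A_i]$ remains $H$-free and, because $|A_i|$ is a fixed fraction of $n$, still satisfies a $(\beta', \delta')$-density condition with worse but controlled constants.

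The heart of the argument is why $G[A_K]$ must have density $\geq 1-\epsilon$. Here I would run the contrapositive of the embedding machinery. Suppose $G[A_K]$ has density less than $1-\epsilon$, i.e.\ a positive fraction of pairs in $A_K$ are non-edges. Inside $G[A_K]$ I would re-apply Lemma \ref{lemmaforhalf} (or, more cleanly, the whole of Lemma \ref{offdiagonalhelpful1}) with a new, smaller internal parameter $\epsilon_2$. Combined with the witnesses $v_1, \ldots, v_K$ — which are adjacent to \emph{every} vertex of $A_K$ — the resulting nested family of subsets verifies the hypothesis of Corollary \ref{cor:inducedembed} for $H$, producing an induced copy of $H$ in $G$. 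This contradicts $H$-freeness, so $G[A_K]$ must in fact have density at least $1-\epsilon$, and $|A_K|$ is the desired linear-sized dense induced subgraph.

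The main obstacle is bookkeeping the constants so that two competing requirements are simultaneously met: on one hand $K$ must be large enough, and $\epsilon_1$ small enough, that any residual non-edges in $A_K$ can be parlayed into an induced copy of $H$ via Corollary \ref{cor:inducedembed}; on the other hand each application of Lemma \ref{offdiagonalhelpful1} shrinks the ambient set by a factor depending on $\beta, \delta$ and $|V(H)|$, so $|A_K|$ must still be at least $\delta n$ at the end. Defining $\delta(\beta, \epsilon, H)$ implicitly as the smallest constant surviving this chain of shrinkings is what produces the required dependence. A technical subtlety I expect to need care with is that the $(\beta, \delta)$-dense condition does not pass cleanly to induced subgraphs; one must instead verify at each stage that the \emph{restriction} of the density condition to the current nested set is strong enough to feed back into Lemma \ref{offdiagonalhelpful1}, and this is where the monotonicity of $\beta$ is used.
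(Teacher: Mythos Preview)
Your proposal has two gaps, one technical and one substantive.

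The technical one: you cannot invoke Lemma~\ref{offdiagonalhelpful1} with $t=\delta n$. That lemma requires $N\geq (4t)^{8|V(H)|^3}\epsilon^{-4|V(H)|^2}|V(H)|$, and with $t=\delta N$ the right-hand side grows like $N^{8|V(H)|^3}$, so the hypothesis fails for all large $N$ regardless of how small $\delta$ is. The reason is that Lemma~\ref{offdiagonalhelpful1} converts ``no independent set of size $t$'' into edge density $\geq 1/(2t)$ via Tur\'an, which is far too weak when $t$ is linear in $N$. The paper instead proves a separate Lemma~\ref{betalemma} that re-runs the nested-set construction of Lemma~\ref{offdiagonalhelpful1} but feeds in the $(\beta,\delta)$-dense hypothesis directly at every step in place of Tur\'an; this gives the dense bipartite pair with $\delta$ depending only on $\beta,\epsilon,H$. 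You seem partly aware of this issue in your last paragraph, so perhaps this is fixable.

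The substantive gap is in what you call the heart of the argument. Suppose you have $v_1,\ldots,v_K\in W_1$ all complete to $A_K\subset W_2$, and $G[A_K]$ has density strictly between $\beta(\cdot)$ and $1-\epsilon$. You claim that re-applying Lemma~\ref{lemmaforhalf} inside $A_K$, together with the witnesses, lets you invoke Corollary~\ref{cor:inducedembed} and produce an induced $H$. I do not see how. The witnesses are adjacent to \emph{every} vertex of $A_K$, so for any $S_2\subset A_K$ they lie in $N_G(S_2)$, not $N_{\bar G}(S_2)$; they are useless for the ``non-adjacent to $S_2$'' requirement of Corollary~\ref{cor:inducedembed}. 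And you cannot iterate Lemma~\ref{lemmaforhalf} inside $A_K$ either: its hypothesis is precisely that no dense bipartite pair of the relevant size exists, which may well fail inside $A_K$ (e.g.\ $G[A_K]$ could have density $1/2$ and be full of near-complete bipartite pairs, yet contain no induced $H$). Nothing in your setup rules this out, so no contradiction is reached.

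The passage from ``a near-complete bipartite pair'' to ``a near-complete induced subgraph'' is genuinely nontrivial, and the paper does not do it by a direct contradiction argument. Instead it imports a bootstrapping lemma from~\cite{FoSu}: if one can always find $2$ linear-sized subsets with density $\geq 1-\epsilon/4$ between them (which is what Lemma~\ref{betalemma} guarantees inside any linear-sized induced subgraph), then one can inductively upgrade this to $2^t$ subsets that are pairwise $(1-\epsilon)$-dense. Taking $t=\log(1/\epsilon)$ and looking at the union of these $1/\epsilon$ sets gives a single linear-sized set of edge density at least $1-2\epsilon$. Your approach is missing an analogue of this doubling step.
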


Notice that R\"odl's theorem is the special case of this statement
when $\beta$ is the constant function with value $\epsilon$. An
important step in the proof of Theorem \ref{largedensity} is the
following lemma which shows how to find two large vertex subsets
with edge density almost $1$ between them in a
$(\beta,\delta)$-dense $H$-free graph.

\begin{lemma}\label{betalemma}
Let $\beta:(0,1] \rightarrow (0,1]$ be a monotone increasing
function, $\epsilon>0$, and $H$ be a graph on $n$ vertices. There is
$\delta>0$ such that every $H$-free graph $G=(V,E)$ on $N$ vertices
that is $(\beta,\delta)$-dense contains disjoint subsets $V_1,V_2
\subset V$ each of cardinality at least $\delta N$ such that every
vertex in $V_1$ is adjacent to all but at most $2\epsilon|V_2|$
vertices in $V_2$.
\end{lemma}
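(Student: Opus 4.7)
The plan is to mirror the proof of Lemma \ref{offdiagonalhelpful1}, with the $(\beta,\delta)$-density hypothesis replacing the Tur\'an-type uniform density estimate. Assume for contradiction that $G$ is $H$-free, $(\beta,\delta)$-dense, and contains no disjoint pair $V_1,V_2\subset V$ with $|V_1|,|V_2|\geq \delta N$ such that every vertex of $V_1$ has at most $2\epsilon|V_2|$ non-neighbors in $V_2$. I would then build a nested family $V=A_1\supset A_2\supset\cdots\supset A_n$ meeting the hypothesis of Corollary \ref{cor:inducedembed}, which would produce an induced copy of $H$ in $G$, contradicting $H$-freeness.

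First I would fix the parameters. Define $\sigma_1=1$ and $\sigma_{i+1}=\frac{1}{4}\beta(\sigma_i)^{4n}\sigma_i$ for $i\geq 1$; these depend only on $\beta$ and $n$. Set
\[
\delta_0 \,=\, \frac{\sigma_{n-1}^{2n}\beta(\sigma_{n-1})^{4n}\epsilon^{4n^2}}{(64n^2)^{2n}}, \qquad \delta_1 \,=\, \Bigl(\frac{\sigma_n}{4n}\Bigr)^{2n},
\]
take $\delta=\min(\delta_0,\delta_1)$, and put $m=4n\delta^{1/2n}N$ (assuming $N$ is large enough that $\delta N\geq n$). The bound $\delta\leq\delta_1$ implies $\delta\leq\sigma_n$, so the $(\beta,\delta)$-density condition guarantees that each $G[A_i]$ has edge density at least $\beta(\sigma_i)$. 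The bound $\delta\leq\delta_0$, combined with the monotonicity of $\sigma\mapsto\sigma^{2n}\beta(\sigma)^{4n}$, yields $\delta\leq\sigma_i^{2n}\beta(\sigma_i)^{4n}\epsilon^{4n^2}/(64n^2)^{2n}$ for every $i\leq n-1$, which is precisely what is needed for the interval $[4nz^{1/2n}|A_i|^{1-1/2n},\,\beta(\sigma_i)^2\epsilon^{2n}|A_i|/(16n)]$ in Lemma \ref{lemmaforhalf} to contain our chosen $m$.

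The iteration then proceeds as in Lemma \ref{offdiagonalhelpful1}. At step $i$, I would apply Lemma \ref{lemmaforhalf} to the induced subgraph $G[A_i]$ with density parameter $\beta(\sigma_i)$, threshold $z=\delta N$ (the contradiction hypothesis for $V$ transfers to any $W_1,W_2\subset A_i$), and common-neighbor parameter $m$. The lemma yields $A_{i+1}\subset A_i$ with $|A_{i+1}|\geq \frac{1}{4}\beta(\sigma_i)^{4n}|A_i|\geq \sigma_{i+1}N$ such that all but fewer than $(2n)^{-2n}\binom{m}{n}^2$ disjoint pairs $(S_1,S_2)$ of $n$-subsets of $A_{i+1}$ admit at least $m$ vertices in $A_i$ adjacent to every vertex of $S_1$ and to no vertex of $S_2$. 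After $n-1$ iterations, $|A_n|\geq\sigma_n N\geq m\geq 2n$ (the middle inequality follows from $\delta\leq\delta_1$), and Corollary \ref{cor:inducedembed} embeds every $n$-vertex graph, in particular $H$, in $G$, the desired contradiction.

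The main obstacle is the parameter calibration. The upper bound $\delta\leq\delta_0$ coming from Lemma \ref{lemmaforhalf}'s applicability at the tightest step $i=n-1$ is sensitive to how small $\beta(\sigma_{n-1})$ can be, while the upper bound $\delta\leq\delta_1$ coming from the embedding depends on the final relative size $\sigma_n=\frac{1}{4}\beta(\sigma_{n-1})^{4n}\sigma_{n-1}$. It is essential to use the refined density $\beta(\sigma_i)$ at each step rather than the uniform lower bound $\beta(\delta)$: the latter would force $\delta\leq(\frac{1}{4}\beta(\delta)^{4n})^{n-1}$, a fixed-point inequality that fails even for $\beta(\sigma)=\sigma$. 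Working with $\beta(\sigma_i)$ sidesteps this issue since the sequence $\sigma_i$ is determined purely by $\beta$ and $n$; both $\delta_0$ and $\delta_1$ are then positive constants depending only on $\beta,\epsilon,n$, so $\delta=\min(\delta_0,\delta_1)>0$ provides the required $\delta(\beta,\epsilon,H)$.
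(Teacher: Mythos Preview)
Your proposal is correct and follows essentially the same approach as the paper: define the recursive sequence $\sigma_i$ (the paper's $\delta_i$), iterate Lemma~\ref{lemmaforhalf} $n-1$ times using the $(\beta,\delta)$-density hypothesis to supply the edge-density lower bound at each step, then invoke Corollary~\ref{cor:inducedembed} for the contradiction. The paper makes slightly different parameter choices (taking $z=n\delta N$ and fixing $m=\frac{\epsilon^{2n}\delta_n}{8n}N$ directly rather than as $4n\delta^{1/2n}N$), but the structure and the crucial observation you highlight---that one must use $\beta(\sigma_i)$ at step $i$ rather than a uniform $\beta(\delta)$---are identical.
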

\begin{proof}
Define the sequence $\{\delta_i\}_{i=1}^n$ of real numbers in
$(0,1]$ recursively as follows: $\delta_1=1$ and
$\delta_i=\frac{1}{4}\beta^{4n}(\delta_{i-1}) \, \delta_{i-1}$. Let
$\delta=(\frac{\epsilon^{2n}\delta_{n}}{64n^{2}})^{2n}n^{-1}$,
$z=n\delta N$, and $m=\frac{\epsilon^{2n}\delta_n}{8n}N$, so
$$4nz^{\frac{1}{2n}}N^{1-\frac{1}{2n}} = m/2 \leq m =
\frac{\epsilon^{2n}\delta_n}{8n}N =
\frac{\epsilon^{2n}\beta^{4n}(\delta_{n-1}) \, \delta_{n-1}}{32n}N
\leq \frac{\beta^2(\delta_{n-1})\epsilon^{2n} \cdot
\delta_{n-1}N}{16n}.$$ Since $G$ is $(\beta,\delta)$-dense, we have
$N \geq \delta^{-1}$ so that $z \geq n$ and $m \geq 2n$.

Suppose for contradiction that $G$ does not contain a pair $V_1,V_2$
of disjoint vertex subsets each of cardinality at least $z$ such
that every vertex in $V_1$ is adjacent to all but at most
$2\epsilon|V_2|$ vertices in $V_2$. By repeated application of Lemma
\ref{lemmaforhalf} $n-1$ times (note that the choice of parameters
allows this), we find a family of nested subsets $V=A_1 \supset
\ldots \supset A_{n}$ with all $|A_i| \geq \delta_iN$ and $|A_n|
\geq \delta_nN \geq m \geq 2n$ which have the following property.
For all $i<n$, all but less than $(2n)^{-2n}{m \choose n}^2$ pairs
$(S_1,S_2)$ of subsets of $A_{i+1}$ with $|S_1|=|S_2|=n$ have at
least $m$ vertices in $A_i$ adjacent to all vertices in $S_1$ and no
vertices in $S_2$. By Corollary \ref{cor:inducedembed}, $G$ contains
$H$ as an induced subgraph, contradicting the assumption that $G$ is
$H$-free, and completing the proof.
\end{proof}

The final step of the proof of Theorem \ref{largedensity} is to show
how to go from two vertex subsets with edge density almost $1$
between them as in Lemma \ref{betalemma} to one vertex subset with
edge density almost $1$. To accomplish this, we use the key lemma in
\cite{FoSu}. We first need some definitions. For a graph $G=(V,E)$
and disjoint subsets $W_1,\ldots,W_t \subset V$, the {\it density}
$d_{G}(W_1,\ldots,W_t)$ between the $t \geq 2$ vertex subsets
$W_1,\ldots,W_t$ is defined by
$$d_G(W_1,\ldots,W_t)=\frac{\sum_{ i < j}
e(W_i,W_j)}{\sum_{i < j } |W_i||W_j|},$$ where $e(A,B)$ is the
number of pairs $(a,b) \in A \times B$ that are edges of $G$.

\begin{definition} For $\alpha,\rho,\epsilon \in [0,1]$ and positive integer $t$, a
graph $G=(V,E)$ is {\bf $(\alpha,\rho,\epsilon,t)$-dense} if, for
all subsets $U \subset V$ with $|U| \geq \alpha |V|$, there are
disjoint subsets $W_{1},\ldots,W_{t} \subset U$ with $|W_{1}|=\ldots
=|W_{t}|=\lceil \rho |U|\rceil$ and $d_{G}(W_1,\ldots,W_{t})\geq
1-\epsilon$.
\end{definition}

By averaging, if $\alpha'\geq \alpha$, $\rho' \leq \rho$,
$\epsilon'\geq \epsilon$, $t' \leq t$, and $G$ is
$(\alpha,\rho,\epsilon,t)$-dense, then $G$ is also
$(\alpha',\rho',\epsilon',t')$-dense. The key lemma in \cite{FoSu}
(applied to the complement of the graph) says that if a graph is
$(\frac{1}{2}\alpha\rho,\rho',\epsilon,t)$-dense and
$(\alpha,\rho,\epsilon/4,2)$-dense, then it is also
$(\alpha,\frac{1}{2}\rho\rho',\epsilon,2t)$-dense.

\vspace{0.2cm}\noindent {\bf Proof of Theorem \ref{largedensity}:}
Fix a graph $H$ on $n$ vertices and a function $\beta:(0,1]
\rightarrow (0,1]$. Note that if a graph $G$ of order $N$ is
$(\beta,\delta)$-dense, then, defining
$\beta_{\alpha}(\sigma)=\beta(\alpha\sigma)$ for $0<\alpha \leq 1$,
every induced subgraph of $G$ of size at least $\alpha N$ is
$(\beta_{\alpha},\alpha^{-1}\delta)$-dense. Therefore, Lemma
\ref{betalemma} implies that there is
$\delta=\delta(\beta,\epsilon,H,\alpha)$ such that every
$(\beta,\delta)$-dense $H$-free graph is
$(\alpha,\delta,\epsilon,2)$-dense.

We first show by induction on $t$ that for $\alpha,\epsilon>0$ and
positive integer $t$, there is $\delta>0$ such that every
$(\beta,\delta)$-dense $H$-free graph $G$ is
$(\alpha,\delta,\epsilon,2^t)$-dense. We have already established
the base case $t=1$. In particular, for $\alpha,\epsilon>0$ there is
$\delta'>0$ such that every $(\beta,\delta')$-dense $H$-free graph
is $(\alpha,\delta',\epsilon/4,2)$-dense. Our induction hypothesis
is that for $\alpha',\epsilon>0$ there is $\delta^*>0$ such that
every $(\beta,\delta^*)$-dense $H$-free graph $G$ is
$(\alpha',\delta^*,\epsilon,2^{t-1})$-dense. Letting
$\alpha'=\frac{1}{2}\alpha\delta'$ and
$\delta=\frac{1}{2}\delta'\delta^*$, then by the key lemma in
\cite{FoSu} mentioned above, we have that every
$(\beta,\delta)$-dense $H$-free graph is
$(\alpha,\delta,\epsilon,2^t)$-dense, which completes the induction.

If we use the last statement with $t=\log \frac{1}{\epsilon}$ and
$\alpha=1$, then we get that there are disjoint subsets
$W_1,\ldots,W_{t} \subset V$ with $t = \frac{1}{\epsilon}$,
$|W_1|=\ldots=|W_t|=\delta|V|$, and $d_{G}(W_1,\ldots,W_t) \geq
1-\epsilon$. Since ${|W_1| \choose 2} \leq \frac{\epsilon}{t}{t|W_1|
\choose 2}$, then even if there are no edges in each $W_i$, the edge
density in the set $W_1 \cup \ldots \cup W_t$ is at least
$1-2\epsilon$. Therefore, (using $\epsilon/2$ instead of $\epsilon$)
we have completed the proof of Theorem \ref{largedensity}. \qed

\vspace{0.1cm} \noindent We use Theorem \ref{largedensity} in the
next section to establish the results on disjoint edges in simple
topological graphs. For the proof of Theorem \ref{cor:drawing}, we
need to know the dependence of $\delta$ on $\beta$ in Theorem
\ref{largedensity}. Fix $\epsilon>0$ and $H$, and let
$\beta(\sigma)=\gamma\sigma$. A careful analysis of the proof of
Lemma \ref{betalemma} demonstrates that there is a constant
$c'=c'(\epsilon,H)$ such that in Lemma \ref{betalemma} we may take
$\delta=\Omega(\gamma^{c'})$. Similarly, the above proof shows that
there is a constant $c=c(\epsilon,H)$ such that in Theorem
\ref{largedensity} we may take $\delta=\Omega(\gamma^{c})$.

R\"odl's theorem was extended by Nikiforov \cite{Ni}, who showed
that if a graph has only few induced copies of $H$, then it can be
partitioned into a constant number of sets each of which is either
very sparse or very dense. We would like to remark that our proof
can be easily modified to give a similar extension of Theorem
\ref{largedensity}, which shows that a $(\beta,\delta)$-dense graph
with few induced copies of $H$ has a partition into a constant
number of very dense subsets.

\section{Edge intersection patterns in simple topological graphs}

We next provide details of the proofs of Theorem
\ref{bipartitedisjoint} and Corollary \ref{cor:drawing} on disjoint
edge patterns in simple topological graphs. We first need to
establish an analogue of the well-known Crossing Lemma which states
that every simple topological graph with $n$ vertices and $m \geq
4n$ edges contains at least $\frac{m^3}{64n^2}$ pairs of crossing
edges. Using the proof of this lemma (see, e.g., \cite{AlSp})
together with the linear upper bound on the number of edges in a
thrackle, it is straightforward to obtain a similar result for
disjoint edges in simple topological graphs. For the sake of
completeness, we sketch the proof here.

\begin{lemma}\label{disjointlemma}
Every simple topological graph $G=(V,E)$ with $n$ vertices and $m
\geq 2n$ edges has at least $\frac{m^3}{16n^2}$ pairs of disjoint
edges.
\end{lemma}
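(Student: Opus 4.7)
The plan is to mimic the standard probabilistic proof of the Crossing Lemma, replacing the planar-graph bound (Euler's formula) with the linear upper bound on the number of edges of a thrackle. Let $t(n)$ denote the maximum number of edges in an $n$-vertex thrackle. By the Lovász--Pach--Szegedy theorem (or the sharper Cairns--Nikolayevsky estimate), $t(n) \le \tfrac{3}{2}n$. I take this as a black box.

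The first step is to establish a linear-in-$m$ lower bound on the number $X$ of disjoint edge pairs in any simple topological graph $G$ on $n$ vertices with $m$ edges. The argument is deletion: iteratively delete one edge from a disjoint pair until no disjoint pair remains; what is left is a thrackle, so has at most $\tfrac{3}{2}n$ edges. Each deletion removes at most one disjoint pair from a fixed list, so at most $X$ deletions suffice, giving
\[
m - X \le \tfrac{3}{2}n, \qquad \text{hence} \qquad X \ge m - \tfrac{3}{2}n.
\]
(This bound is trivially valid even when $m<\tfrac{3}{2}n$, since $X\ge 0$.)

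The second step amplifies this linear bound to a cubic one by random sampling. Choose a parameter $p\in(0,1]$ and let $V'\subset V$ be obtained by keeping each vertex independently with probability $p$; write $n',m',X'$ for the number of vertices, edges, and disjoint edge pairs of the induced simple topological subgraph $G[V']$. Since any two disjoint edges involve $4$ distinct vertices, linearity of expectation gives
\[
\mathbb{E}[n'] = pn, \qquad \mathbb{E}[m'] = p^{2}m, \qquad \mathbb{E}[X'] = p^{4}X.
\]
Applying the linear bound inside the random subgraph (which holds pointwise) and taking expectations yields $p^{4}X \ge p^{2}m - \tfrac{3}{2}p n$, i.e.
\[
X \;\ge\; \frac{m}{p^{2}} - \frac{3n}{2p^{3}}.
\]
Finally I set $p = 2n/m$, which is at most $1$ by the assumption $m\ge 2n$. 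Substituting gives
\[
X \;\ge\; \frac{m^{3}}{4n^{2}} - \frac{3}{2}\cdot\frac{m^{3}}{8n^{2}} \;=\; \frac{m^{3}}{16n^{2}},
\]
which is exactly the claimed bound. The only real obstacle is the choice of the thrackle constant: the argument needs $t(n)\le cn$ with $c\le 3/2$ in order to reach the constant $1/16$, so invoking the Cairns--Nikolayevsky bound (rather than merely any linear bound) is what makes the arithmetic work out cleanly; any weaker linear bound would still give a result of the form $\Omega(m^{3}/n^{2})$ with a worse constant.
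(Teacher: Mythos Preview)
Your argument is correct and essentially identical to the paper's own proof: both invoke the Cairns--Nikolayevsky thrackle bound $t(n)\le \tfrac{3}{2}(n-1)$ to obtain $X\ge m-\tfrac{3}{2}n$, then random-sample vertices with probability $p=2n/m$ and apply linearity of expectation exactly as you do. (Minor wording quibble: in your deletion step each deletion removes \emph{at least} one disjoint pair, not at most one, which is what makes ``at most $X$ deletions suffice'' true.)
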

{\bf Sketch of Proof:} Let $t$ be the number of disjoint edges in
$G$. The result in \cite{CaNi} that every $n$-vertex simple
topological graph without a pair of disjoint edges has at most
$\frac{3}{2}(n-1)$ edges implies that every $n$-vertex simple
topological graph with $m$ edges has at least $m-\frac{3}{2}(n-1)
\geq m-\frac{3}{2}n$ pairs of disjoint edges. Let $G'$ be the random
induced subgraph of $G$ obtained by picking each vertex with
probability $p=2n/m \leq 1$. The expected number of vertices of $G'$
is $pn$, the expected number of edges of $G'$ is $p^2n$, and the
expected number of pairs of disjoint edges in the given embedding of
$G'$ is $p^4t$. Hence, $p^4t \geq p^2m-\frac{3}{2}pn$, or
equivalently, $t \geq p^{-2}m-\frac{3}{2}p^{-3}n=\frac{m^3}{16n^2}$,
which is the desired result.\qed

Another ingredient in the proof of Theorem \ref{bipartitedisjoint}
is a separator theorem for curves proved in \cite{FoPa}. A {\it
separator} for a graph $\Gamma=(V,E)$ is a subset $V_0 \subset V$
such that there is a partition $V=V_0 \cup V_1 \cup V_2$ with
$|V_1|,|V_2| \leq \frac{2}{3}|V|$ and no vertex in $V_1$ is adjacent
to any vertex in $V_2$. Using the well-known Lipton-Tarjan separator
theorem for planar graphs, Fox and Pach \cite{FoPa} proved that the
intersection graph of any collection of curves in the plane with $k$
crossings has a separator of size at most $C\sqrt{k}$, where $C$ is
an absolute constant. Recall that Theorem \ref{bipartitedisjoint}
says that for each $\gamma>0$ there is $\delta>0$ and $n_0$ such
that every simple topological graph $G=(V,E)$ with $n \geq n_0$
vertices and $m \geq \gamma n^2$ edges contains two disjoint edge
subsets $E_1,E_2$ each of cardinality at least $\delta n^2$ such
that every edge in $E_1$ is disjoint from every edge in $E_2$.

\vspace{0.2cm}\noindent {\bf Proof of Theorem
\ref{bipartitedisjoint}:} Define an auxiliary graph $\Gamma$ with a
vertex for each edge of the simple topological graph $G$ in which a
pair of vertices of $\Gamma$ are adjacent if and only if their
corresponding edges in $G$ are disjoint. Lemma \ref{disjointlemma}
tells us that every induced subgraph of $\Gamma$ with $\sigma m \geq
\sigma \gamma n^2 \geq 2n$ vertices has at least $\frac{(\sigma
m)^3}{16n^2}$ edges and therefore has edge density at least
$$\frac{(\sigma m)^3}{16n^2}/{\sigma m \choose 2} \geq \frac{\sigma m}{8n^2} \geq \frac{\gamma
\sigma}{8}.$$ In other words, $\Gamma$ is $(\beta,\delta)$-dense
with $\beta(\sigma)=\frac{\gamma \sigma}{8}$ and
$\delta=\frac{2}{\gamma n}$. Let $H$ be the $15$-vertex graph which
is the complement of the $1$-subdivision of $K_5$. As mentioned in
Section \ref{topsubsection}, the intersection graph of curves in the
plane does not contain the $1$-subdivision of $K_5$ as an induced
subgraph and therefore the graph $\Gamma$ is $H$-free. Hence,
Theorem \ref{largedensity} implies that for each $\epsilon>0$ there
is $\delta'>0$ and an induced subgraph $\Gamma'$ of $\Gamma$ with
order at least $\delta' m \geq \delta' \gamma n^2$ and edge density
at least $1-\epsilon$. We use this fact with
$\epsilon=\frac{1}{36C^2}$, where $C$ is the constant in the
separator theorem for curves. Since $\Gamma'$ has edge density at
least $1-\epsilon$ and each pair of edges in the simple topological
graph cross at most once, then the number $k$ of crossings between
edges of $G$ corresponding to vertices of $\Gamma'$ is less than
$\epsilon|\Gamma'|^2=\frac{1}{36C^2}|\Gamma'|^2$. Applying the
separator theorem for curves, we get a partition of the vertex set
of $\Gamma'$ into subsets $V_0,V_1,V_2$ with $|V_0| \leq
C\sqrt{\frac{1}{36C^2}|\Gamma'|^2} \leq |\Gamma'|/6$ and
$|V_1|,|V_2| \leq 2|\Gamma'|/3$, and no edges in $\Gamma'$ between
$V_1$ and $V_2$. In particular, both $V_1$ and $V_2$ have
cardinality at least $|\Gamma'|/6$. Therefore, letting
$\delta=\frac{1}{6}\delta'\gamma$, we have two edge subsets
 $E_1,E_2$ of $G$ (which correspond to $V_1,V_2$ in $\Gamma'$) each
 with cardinality at least $\delta n^2$ such that every
 edge in $E_1$ is disjoint from every edge in $E_2$. \qed

As we already mentioned in the discussion right after the proof of
Theorem \ref{largedensity}, the value of $\delta'$ which was used in
the above proof of Theorem \ref{bipartitedisjoint} satisfies
$\delta' \geq \gamma^{c'}$ for some constant $c'$. Since
$\delta=\frac{1}{6}\delta'\gamma \geq \frac{1}{6}\gamma^{c'+1}$,
 we have the following quantitative version of Theorem
\ref{bipartitedisjoint}. There is a constant $c$ such that every
simple topological graph $G=(V,E)$ with $n$ vertices and at least
$\gamma n^2$ edges with $\gamma \geq 2/n$ has two disjoint edge
subsets $E_1,E_2 \subset E$ each of size at least $\gamma^c n^2$
such that every edge in $E_1$ is disjoint from every edge in $E_2$.

We next prove a strengthening of Corollary \ref{cor:drawing}. It
says that any simple topological graph on $n$ vertices and at least
$\gamma n^2$ edges contains $\gamma'(\log n)^{1+a}$ disjoint edges
where $\gamma'>0$ only depends on $\gamma$ and $a>0$ is an absolute
constant.

\vspace{0.2cm}\noindent {\bf Proof of Corollary \ref{cor:drawing}:}
Let $d$ be the largest positive integer such that $\gamma^{c^d} \geq
n^{-1/2}$, where $c$ is the constant in the quantitative version of
Theorem \ref{bipartitedisjoint} stated above. By repeated
application of this quantitative version, we get disjoint subsets
$E_1,\ldots,E_{2^d}$ each of size at least $\gamma^{c^d}n^2 \geq
n^{3/2}$ such that no edge in $E_i$ intersects an edge in $E_j$ for
all $i \not = j$. By definition of $d$, we have $\gamma^{c^{d+1}} <
n^{-1/2}$, which implies that $2^d \geq \left(\frac{\log n}{2c\log
1/\gamma} \right)^{1/\log c}=\gamma_1(\log n)^{b}$ where
$\gamma_1>0$ only depends on $\gamma$ and $b=1/\log c>0$ is an
absolute constant. Now we need to use the result of Pach and T\'oth
\cite{PaTo} mentioned in Section \ref{topsubsection}, which says
that every simple topological graph of order $n$ without $k$
pairwise disjoint edges has $O(n (\log n)^{4k-8})$ edges. By
choosing $k'=\frac{\log n}{8\log \log n}$, we conclude that every
simple topological graph with at least $n^{3/2}$ edges (in
particular, each of the sets $E_i$) contains at least $k'$ pairwise
disjoint edges. Therefore, altogether $G$ contains $\gamma_1(\log
n)^{b} \cdot \frac{\log n}{8\log \log n} \geq \gamma'(\log n)^{1+a}$
pairwise disjoint edges, where $\gamma'>0$ only depends on $\gamma$
and $a>0$ is any absolute constant less than $b$. \qed

\section{Monochromatic Induced Copies}\label{multicolorinduced}

The goal of this section is to prove the upper bound on multicolor
induced Ramsey numbers in Theorem \ref{inducedRamsey}. To accomplish
this, we demonstrate that the graph $\Gamma$ which gives the bound
in this theorem can be taken to be any pseudo-random graph of
appropriate order and edge density. Recall that the {\it random
graph} $G(n,p)$ is the probability space of labeled graphs on $n$
vertices, where every edge appears independently with probability
$p$. An important property of $G(n,p)$ is that, with high
probability, between any two large subsets of vertices $A$ and $B$,
the edge density $d(A,B)$ is approximately $p$, where $d(A,B)$ is
the fraction of ordered pairs $(a,b) \in A \times B$ that are edges.
This observation is one of the motivations for the following useful
definition. A graph $\Gamma=(V,E)$ is {\it
$(p,\lambda)$-pseudo-random} if the following inequality holds for
all (not necessarily disjoint) subsets $A,B \subset V$:
$$|d(A,B)-p| \leq \frac{\lambda}{\sqrt{|A||B|}}.$$
The survey by Krivelevich and Sudakov \cite{KrSu} contains many
examples of $(p,\lambda)$-pseudo-random graphs on $n$ vertices with
$\lambda=O(\sqrt{pn})$. One example is the random graph $G(n,p)$
which with high probability is $(p,\lambda)$-pseudo-random with
$\lambda=O(\sqrt{pn})$ for $p<.99$. The Paley graph $P_N$ is another
example of a pseudo-random graph. For $N$ a prime power, $P_N$ has
vertex set $\mathbb{F}_N$ and distinct elements $x,y \in
\mathbb{F}_N$ are adjacent if $x-y$ is a square. It is well known
(see, e.g., \cite{KrSu}) that the Paley graph $P_N$ is
$(1/2,\sqrt{N})$-pseudo-random. We deduce Theorem
\ref{inducedRamsey} from the following theorem.

\begin{theorem}\label{inducedpseudorandom}
If $n,k \geq 2$ and $\Gamma$ is $(p,\lambda)$-pseudo-random with $N$
vertices, $0<p \leq 1/2$, and $\lambda \leq (k/p)^{-100n^3k}N$, then
every graph on $n$ vertices has a monochromatic induced copy in
every $k$-edge-coloring of $\Gamma$. Moreover, all of these
monochromatic induced copies can be found in the same color.
\end{theorem}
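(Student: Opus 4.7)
The plan is to reduce the theorem to an application of Lemma \ref{inducedembedding} with $F=\bar\Gamma$ and with $G$ equal to a single color class $G_{c^*}$ of the $k$-edge-coloring. Since $G_{c^*}\subset E(\Gamma)$ and $\bar\Gamma$ are edge-disjoint, any induced copy of $H$ the lemma produces has its edges inside $G_{c^*}$ and its non-edges inside $\bar\Gamma$, and is therefore a $c^*$-monochromatic induced copy of $H$ in $\Gamma$. The same $c^*$ serves every $H$ on $n$ vertices, giving both the main statement and the ``moreover'' clause. Concretely, I must exhibit a color $c^*$ and nested subsets $A_1\supset\dots\supset A_n$ with $|A_n|\ge m\ge 2n$ such that for every $i<n$, all but at most $(2n)^{-2n}{m\choose n}^2$ disjoint pairs $(S_1,S_2)$ of $n$-subsets of $A_{i+1}$ satisfy $|N_{G_{c^*}}(S_1)\cap N_{\bar\Gamma}(S_2)\cap A_i|\ge m$.

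I would build the nested subsets iteratively. Given the current subset $A_i$, pseudo-randomness gives $|E(\Gamma[A_i])|\ge p\,{|A_i|\choose 2}-O(\lambda |A_i|)$, so some color class $G_{c_i}$ has density at least $p/(2k)$ inside $A_i$. I then apply Lemma \ref{lem:dependent} to (a balanced bipartition of) $G_{c_i}[A_i]$ with $a=1$ and $t=\Theta(n)$ to extract a subset $A_{i+1}\subset A_i$ of size $\ge (p/k)^{\Theta(n)}|A_i|$ in which all but a negligible fraction of $n$-subsets $S_1$ satisfy $|N_{G_{c_i}}(S_1)\cap A_i|\ge x:=(p/k)^{\Theta(n)}|A_i|$. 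Iterating this procedure for $T=nk$ levels produces a sequence of colors $c_1,\ldots,c_T$; by pigeonhole some color $c^*$ is chosen at $n$ of these levels, and restricting to those levels yields a nested sequence of length $n$ in which almost every $n$-subset $S_1$ has large common $G_{c^*}$-neighborhood in the preceding level.

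To upgrade from the one-sided condition on $S_1$ to the two-sided condition required by Lemma \ref{inducedembedding}, I would invoke $(p,\lambda)$-pseudo-randomness of $\bar\Gamma$ once more: a standard iterated pseudo-random estimate shows that for any fixed set $U\subset V(\Gamma)$ with $|U|\ge \lambda(k/p)^{\Theta(n)}$, all but a $(2n)^{-3n}$ fraction of $n$-subsets $S_2$ satisfy $|N_{\bar\Gamma}(S_2)\cap U|\ge (1-p)^n|U|/2$. Applying this with $U=N_{G_{c^*}}(S_1)\cap A_i$ for each of the many good $S_1$ and setting $m=(1-p)^n(p/k)^{\Theta(n)}N$, a union bound over the bad $S_1$ and the bad $S_2$ yields the hypothesis of Lemma \ref{inducedembedding}. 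The main difficulty will be the quantitative bookkeeping: each of the $\sim nk$ iterations loses a factor $(k/p)^{\Theta(n)}$ in the subset size, the pseudo-random error $\lambda/\sqrt{|A_i|}$ must remain negligible compared to the densities we track, and the threshold $(2n)^{-2n}{m\choose n}^2$ forces $m$ to be sufficiently large; the hypothesis $\lambda\le(k/p)^{-100n^3 k}N$ is calibrated precisely so that all these losses are absorbed simultaneously with room to spare.
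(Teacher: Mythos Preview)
Your proposal is correct and follows essentially the same route as the paper. The paper packages your two steps---the dependent-random-choice extraction of $A_{i+1}$ together with the pseudo-random upgrade from the one-sided condition on $S_1$ to the two-sided condition on $(S_1,S_2)$---into a single auxiliary lemma (Lemma~\ref{helpful}), which it applies at each of $k(n-2)+1$ levels before pigeonholing to a color that appears $n-1$ times; apart from this organizational difference and slightly tighter constants, the argument is the same as yours.
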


By letting $\Gamma$ be a sufficiently large, pseudo-random graph
with $p=1/2$, Theorem \ref{inducedRamsey} follows from Theorem
\ref{inducedpseudorandom}. For example, with high probability, the
graph $\Gamma$ can be taken to be the random graph $G(N,1/2)$ with
$N=k^{500n^3k}$. Alternatively, for an explicit construction, we can
take $\Gamma$ to be a Paley graph $P_N$ with $N \geq k^{500n^3k}$
prime.

The following lemma is the main tool in the proof of Theorem
\ref{inducedpseudorandom}. In the setting of Lemma \ref{helpful}, we
have a graph $G$ that is a subgraph of a pseudo-random graph
$\Gamma$. We use Lemma \ref{lem:dependent} to show there is a large
subset $A$ of vertices such that $|N_G(S_1)|$ is large for almost
all small subsets $S_1$ of $A$. We use the pseudo-randomness of
$\Gamma$ to ensure that for almost all small disjoint subsets $S_1$
and $S_2$ of $A$, there are many vertices adjacent to $S_1$ in $G$
and adjacent to $S_2$ in $\bar \Gamma$.

\begin{lemma}\label{helpful}
Let $\Gamma$ be a $(p,\lambda)$-pseudo-random graph with $p \leq
1/2$ and $G$ be a subgraph with order $N$ and $\epsilon{N \choose
2}$ edges. Suppose $m$ and $n$ are positive integers such that
$$8n(\lambda/p)^{\frac{2}{2n+1}}N^{1-\frac{2}{2n+1}}<m<\frac{\epsilon^2}{2^{10pn+4}n}N.$$
Then there is a subset $A \subset V$ with $|A| \geq
\frac{1}{4}\epsilon^{4n}N$ such that for all but less than
$(2n)^{-2n}{m \choose n}^2$ pairs of disjoint subsets $S_1,S_2
\subset A$ with $|S_1|=|S_2|=n$, there are at least $m$ vertices
adjacent to every vertex of $S_1$ in $G$ and no vertex of $S_2$ in
$\Gamma$.
\end{lemma}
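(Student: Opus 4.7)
The plan is to follow the template of the proof of Lemma~\ref{lemmaforhalf}, with its ``no near-complete pair'' hypothesis replaced by the quantitative pseudo-randomness of $\Gamma$. By averaging over balanced partitions $V(\Gamma)=V_1\cup V_2$ with $|V_i|=N/2$, I first pass to a bipartite subgraph of $G$ with at least $\epsilon(N/2)^2$ crossing edges. Then I apply Lemma~\ref{lem:dependent} with parameters $a=1$, $t=4n$, $d=n$, and a threshold $x$ of order $\bigl(\tfrac{2}{1-p}\bigr)^n m$, obtaining a subset $A\subset V_2$ with $|A|\ge\tfrac{1}{4}\epsilon^{4n}N$ such that the number $T_1$ of $n$-subsets $S_1\subset A$ with $|N_G(S_1)|<x$ is bounded by $2\epsilon^{-4n}(2x/N)^{4n}(2|A|/N)\binom{N/2}{n}$. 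The upper bound on $m$ in the hypothesis is designed precisely so that $\binom{|A|}{n}T_1\le\tfrac{1}{2}(2n)^{-2n}\binom{m}{n}^2$.

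The main new ingredient is a pseudo-random sieve that bounds, for each $S_1$ with $W_0:=N_G(S_1)$ of size at least $x$, the number of ``bad'' $n$-subsets $S_2\subset A\setminus S_1$ (those with $|W_n|<m$, where for an ordering $(u_1,\ldots,u_n)$ of $S_2$ we set $W_i:=W_{i-1}\setminus N_\Gamma(u_i)$). I call step $i$ bad if $|W_i|<\tfrac{1-p}{2}|W_{i-1}|$, equivalently $|N_\Gamma(u_i)\cap W_{i-1}|>\tfrac{1+p}{2}|W_{i-1}|$. Applying the $(p,\lambda)$-pseudo-random inequality $|e(U,W_{i-1})-p|U||W_{i-1}||\le\lambda\sqrt{|U||W_{i-1}|}$ to the set $U$ of bad choices for $u_i$ immediately yields $|U|\le 4\lambda^2/((1-p)^2|W_{i-1}|)\le 16\lambda^2/m$ whenever $|W_{i-1}|\ge m$. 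An entirely good ordering forces $|W_n|\ge\bigl(\tfrac{1-p}{2}\bigr)^n x\ge m$, so every bad $S_2$ admits \emph{no} all-good ordering; there are at least $(|A|-n-16\lambda^2/m)^n$ all-good ordered tuples, and therefore the number $T_2$ of bad $S_2$ per $S_1$ is at most
$$\frac{|A|^n-(|A|-n-16\lambda^2/m)^n}{n!}\;\lesssim\; \frac{\lambda^2|A|^{n-1}}{m\,(n-1)!}.$$

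Finally the total number of bad pairs $(S_1,S_2)$ is at most $\binom{|A|}{n}(T_1+T_2)\le (2n)^{-2n}\binom{m}{n}^2$. The main obstacle is the joint calibration of $x$: it must be large enough that $\bigl(\tfrac{1-p}{2}\bigr)^n x\ge m$ (so the sieve reaches $|W_n|\ge m$) yet small enough that the upper bound on $m$ controls $T_1$, and one then has to verify that the resulting inequality $m^{2n+1}\gtrsim\lambda^2 N^{2n-1}$ produced by the $T_2$ estimate matches the stated lower bound on $m$ up to constants. The factor $p^{-2}$ in the stated lower bound reflects a sharper variant of the pseudo-random estimate using threshold $\sim p$ in place of $(1-p)/2$, which is preferable when $p$ is small (at the cost of requiring $p<1/2$ strictly for the geometric decay $(1-2p)^n$ to be usable).
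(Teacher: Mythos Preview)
Your overall plan is exactly the paper's: bipartite split, dependent random choice with $a=1$, $t=4n$, $d=n$ to produce $A$, then a vertex-by-vertex pseudo-random sieve in $\bar\Gamma$ to control the bad $S_2$'s. The one substantive issue is your calibration of the decay factor in the sieve.

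With your decay $(1-p)/2$ you are forced to take $x=\bigl(\tfrac{2}{1-p}\bigr)^n m\ge 2^n m$ for every $p\ge 0$, so the $T_1$ estimate picks up a factor of at least $2^{4n^2}$ from $x^{4n}$. The stated upper bound $m<\epsilon^2 2^{-10pn-4}n^{-1}N$ only supplies $(m/N)^{2n}\le\epsilon^{4n}2^{-20pn^2-8n}n^{-2n}$; when $p$ is small the $2^{-20pn^2}$ is negligible and cannot absorb your $2^{4n^2}$. Concretely, your claimed inequality $\binom{|A|}{n}T_1\le\tfrac12(2n)^{-2n}\binom{m}{n}^2$ fails already for $p$ near $0$ and $n\ge 2$. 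You partly anticipate this at the end, but you frame it as an issue with the \emph{lower} bound on $m$; in fact the $p$-dependence in the \emph{upper} bound is what breaks.

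The paper takes decay factor $1-\tfrac{3p}{2}$, i.e.\ $x=(1-3p/2)^{-n}m$, and this threads both constraints simultaneously. On the $T_1$ side, the elementary inequality $1-3p/2\ge 2^{-5p}$ for $0<p\le 1/2$ gives $x\le 2^{5pn}m$, hence $x^{4n}\le 2^{20pn^2}m^{4n}$, which the $2^{-20pn^2}$ from the upper bound on $m$ cancels exactly. On the sieve side, a bad $u_i$ has $\Gamma$-density exceeding $3p/2$ into $W_{i-1}$, a deviation of $p/2$, so pseudo-randomness yields $|X_i|<4(\lambda/p)^2/m$; this is the source of the $(\lambda/p)^{2/(2n+1)}$ in the lower bound. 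Your suggested alternative $(1-2p)$ would also handle small $p$ but degenerates at the endpoint $p=1/2$, which the lemma must cover; $1-3p/2$ stays at least $1/4$ throughout $(0,1/2]$.
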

\begin{proof}
By averaging over all partitions $V=V_1 \cup V_2$ of $G$ with
$|V_1|=|V_2|=N/2$, we can find a partition with at least $\epsilon
(N/2)^2$ edges between $V_1$ and $V_2$. By Lemma \ref{lem:dependent}
with $a=1$, $t=4n$, $d=n$, and $x=(1-3p/2)^{-n}m$, there is a subset
$A \subset V_1$ with cardinality at least
$\frac{1}{2}\epsilon^{4n}|V_1| =\frac{1}{4}\epsilon^{4n}N$ such that
the number of subsets $S_1$ of $A$ of size $n$ with $|N_G(S_1)| < x$
is at most
\begin{equation*}
2\epsilon^{-4n}\left(\frac{x}{N/2}\right)^{4n}\left(\frac{|A|}{N/2}\right){N/2
\choose n} \leq \left(\frac{2x}{\epsilon N}\right)^{4n}{N \choose n}
\leq \epsilon^{-4n}2^{20pn^2+4n}(m/N)^{4n}{N \choose n},
\end{equation*}
where the last inequality follows from the simple inequality $1-3p/2
\geq 2^{-5p}$ for $p \leq 1/2$. This implies, using the upper bound
on $m$, that the number of disjoint pairs $S_1,S_2$ of subsets of
$A$ with $|S_1|=|S_2|=n$ and $|N_G(S_1)|<x$ is at most
\begin{eqnarray}\label{inducedrams2}
\epsilon^{-4n}2^{20pn^2+4n}(m/N)^{4n}{N \choose n} \cdot {N \choose
n} &\leq&
\epsilon^{-4n}2^{20pn^2+4n}(m/N)^{2n}\frac{m^{2n}}{n!^2} \nonumber\\
&\leq&
\epsilon^{-4n}2^{20pn^2+4n}\left(\frac{\epsilon^2}{2^{10pn+4}n}\right)^{2n}\frac{m^{2n}}{n!^2}
\leq 2^{-4n}n^{-2n}\frac{m^{2n}}{n!^2} \nonumber \\ &<&
\frac{1}{2}(2n)^{-2n}{m \choose n}^2.
\end{eqnarray}

Let $S_1$ be a subset of $A$ of cardinality $n$ with $|N_G(S_1)|
\geq x=(1-3p/2)^{-n} m$. We will show that almost all subsets $S_2$
of $A \setminus S_1$ of cardinality $n$ satisfy $|N_G(S_1) \cap
N_{\bar \Gamma}(S_2)| \geq m$. To do this, we give a lower bound on
the number of ordered $n$-tuples $(u_1,\ldots,u_n)$ of distinct
vertices of $A\setminus S_1$ such that for each $i$, $|N_G(S_1) \cap
N_{\bar \Gamma}(\{u_1,\ldots,u_i\})| \geq (1-3p/2)^i|N_G(S_1)|$.
Suppose we have already picked $u_1,\ldots,u_{i-1}$ satisfying
$|N_G(S_1) \cap N_{\bar \Gamma}(\{u_1,\ldots,u_{i-1}\})| \geq
(1-3p/2)^{i-1}|N_G(S_1)| \geq m$. Let $X_i$ denote the set of
vertices $u_i$ in $A$ with $|N_G(S_1) \cap N_{\bar \Gamma}
(\{u_1,\ldots,u_i\})| < (1-3p/2)^i|N_G(S_1)|$. Then the edge density
between $X_i$ and $N_G(S_1) \cap N_{\bar \Gamma}
(\{u_1,\ldots,u_{i-1}\})$ in $\Gamma$ is more than $3p/2$. Since
$\Gamma$ is $(p,\lambda)$-pseudo-random, we have $$p/2 <
\frac{\lambda}{\sqrt{|X_i|\cdot |N_G(S_1) \cap N_{\bar \Gamma}
(\{u_1,\ldots,u_{i-1}\}|)}} \leq \frac{\lambda}{\sqrt{|X_i|m}}.$$
Therefore, $|X_i| <4(\lambda/p)^2m^{-1}$. Hence, during this process
for every index $i$ there are at least
$$|A|-|S_1|-(i-1)-\frac{4(\lambda/p)^2}{m} >|A|-2n-4(\lambda/p)^2m^{-1} \geq |A|-8n(\lambda/p)^2m^{-1}$$
choices for $u_i \in A \setminus (S_1 \cup \{u_1,\ldots,u_{i-1}\}
\cup X_i)$. Therefore, given $S_1$ with $|N_G(S_1)| \geq x$, we
conclude that the number of ordered $n$-tuples $(u_1,\ldots,u_n)$ of
distinct vertices of $A \setminus S_1$ with
$$|N_G(S_1) \cap N_{\bar \Gamma}(\{u_1,\ldots,u_n\})| \geq
(1-3p/2)^n|N_G(S_1)| \geq m$$ is at least
$$\left(|A|-8n(\lambda/p)^2m^{-1}\right)^n \geq |A|^n-8n^2(\lambda/p)^2m^{-1}|A|^{n-1}.$$
This together with the lower bound on $m$ implies that the number of
pairs $S_1,S_2$ of disjoint (unordered) subsets of $A$ with
$|N_G(S_1)| \geq x$ and $|N_G(S_1) \cap N_{\bar \Gamma}(S_2)| <m$ is
at most
\begin{eqnarray}\label{inducedrams3}
 {N \choose n} \cdot
\frac{1}{n!}8n^2(\lambda/p)^2m^{-1}|A|^{n-1} &\leq&
8n^2m^{-1}n!^{-2}(\lambda/p)^2 N^{2n-1} \leq 8n^2m^{-1}n!^{-2}
\left(\frac{m}{8n}\right)^{2n+1} \nonumber\\ &=& 2^{-4n}n
(2n)^{-2n}\frac{m^{2n}}{n!^2} < \frac{1}{2}(2n)^{-2n}{m \choose
n}^2.
\end{eqnarray}

Combining (\ref{inducedrams2}) and (\ref{inducedrams3}), all but
less than $(2n)^{-2n}{m \choose n}^2$ pairs $S_1,S_2$ of disjoint
subsets of $A$ with $|S_1|=|S_2|=n$ satisfy $|N_G(S_1) \cap N_{\bar
\Gamma}(S_2)| \geq m$, which completes the proof.
\end{proof}

We are now ready to prove our main result in this section.

\vspace{0.2cm} \noindent {\bf Proof of Theorem
\ref{inducedpseudorandom}:} Consider a $k$-edge-coloring of the
$(p,\lambda)$-pseudo-random graph $\Gamma$ with colors $1,\ldots,k$.
Let $B_1$ denote the set of vertices of $\Gamma$. For $j \in
\{1,\ldots,k\}$, let $G_j$ denote the graph of color $j$. Let
$\epsilon=\frac{p}{2k}$ and $m= \epsilon^{20n^2k}N$.

We will pick nested subsets $B_1 \supset \ldots \supset
B_{k(n-2)+2}$ such that, for each $i \leq k(n-2)+1$, we have
$|B_{i+1}| \geq \frac{1}{4}\epsilon^{4n}|B_{i}|$ and there is a
color $c(i) \in \{1,\ldots,k\}$ such that all but less than
$(2n)^{-2n}{m \choose n}^2$ pairs of disjoint subsets $S_1,S_2
\subset B_{i+1}$ each of size $n$ have at least $m$ vertices in
$B_i$ adjacent to $S_1$ in $G_{c(i)}$ and adjacent to $S_2$ in $\bar
\Gamma$. Once we have found such a family of nested subsets, the
proof is easy. By the pigeonhole principle, one of the $k$ colors is
represented at least $n-1$ times in the sequence
$c(1),\ldots,c(k(n-2)+1)$. We suppose without loss of generality
that $1$ is this popular color. Let $i(1)=1$ and for $1<j \leq n-1$,
let $i(j)$ be the $j^{\textrm{th}}$ smallest integer $i>1$ such that
$c(i-1)=1$. Letting $A_j=B_{i(j)}$, we have, by Lemma
\ref{inducedembedding} with $G_1$ as $G$ and $\bar \Gamma$ as $F$,
that there is an induced copy of every graph on $n$ vertices that is
monochromatic of color $1$. So, for the rest of the proof, we only
need to show that there are nested subsets $B_1 \supset \ldots
\supset B_{k(n-2)+2}$ and colors $c(1),\ldots,c(k(n-2)+1)$ with the
desired properties.

We now show how to pick $c(i)$ and $B_{i+1}$ having already picked
$B_i$. Let $c(i)$ denote the densest of the $k$ colors in
$\Gamma[B_i]$. By pseudo-randomness of $\Gamma$, it is
straightforward to check that the density of $\Gamma$ in $B_i$ is at
least $p/2$, so the edge density of color $c(i)$ in $G[B_i]$ is at
least $\frac{p}{2k}=\epsilon$. Indeed, if not, then the density
between $B_1$ and itself in $\Gamma$ deviates from $p$ by at least
$p/2$ and so, by pseudo-randomness of $\Gamma$,
$$\frac{2\lambda}{p} \geq |B_i| \geq \left(\frac{1}{4}\epsilon^{4n}\right)^iN \geq
\left(\frac{p}{4k}\right)^{4n^2k}N \geq (p/k)^{20n^2k}N,$$
contradicting the upper bound on $\lambda$. Since
$m=\epsilon^{20n^2k}N$, $k \geq 2$, $p \leq 1/2$, and
$\epsilon=\frac{p}{2k} \leq 1/8$, we have
\begin{eqnarray*}
8n(\lambda/p)^{\frac{2}{2n+1}}|B_i|^{1-\frac{2}{2n+1}}&\leq&
8n\left((k/p)^{-100n^3k}N
 /p\right)^{\frac{2}{2n+1}}N^{1-\frac{2}{2n+1}} <
 8n(k/p)^{-50n^2k}N \\ &<& m <
 \frac{\epsilon^2}{2^{20n}}\left(\frac{1}{4}\epsilon^{4n}\right)^{nk} N<
 \frac{\epsilon^2}{2^{10pn+4}n}|B_i|.\end{eqnarray*}
Hence, we may apply Lemma \ref{helpful} to the graph
$G_{c(i)}[B_i]$, which is a subgraph of the
$(p,\lambda)$-pseudo-random graph $\Gamma$, and get a subset
$B_{i+1}$ of $B_i$ with the desired properties. This completes the
proof of the induction step and the proof of theorem. \qed

\section{Concluding Remarks}

\begin{itemize}
\item We conjecture that there is an absolute constant $c$ such that $r(H) \leq 2^{c\Delta}n$ for every
$H$ with $n$ vertices and maximum degree $\Delta$ and our results
confirm it for graphs of bounded chromatic number. This question is
closely related to another old problem on Ramsey numbers. More than
thirty years ago, Erd\H{o}s conjectured that $r(H) \leq
2^{c\sqrt{m}}$ for every graph $H$ with $m$ edges and no isolated
vertices. The best known bound for this question is $r(H) \leq
2^{c\sqrt{m}\log m}$ (see \cite{AlKrSu}) and the solution of our
conjecture might lead to further progress on the problem of
Erd\H{o}s as well.

\item The bound $r(H) \leq  2^{4d+12}\Delta n$ for bipartite $d$-degenerate $n$-vertex graphs
with maximum degree $\Delta$ shows that $\log r(H) \leq 4d+2\log n +
12$. On the other hand, the standard probabilistic argument gives
the lower bound $r(H) \geq \max(2^{d(H)/2},n)$, where the {\it
degeneracy number} $d(H)$ is the smallest $d$ such that $H$ is
$d$-degenerate. It therefore follows that $\log r(H)
=\Theta\left(d(H)+\log n\right)$ for every bipartite graph $H$ (this
can be also deduced with slightly weaker constants from the result
in \cite{AlKrSu}). In particular, since $d(H)$ can be quickly
computed by simply deleting the vertex with minimum degree and
repeating this until the graph is empty, we can efficiently compute
$\log r(H)$ up to a constant factor for every bipartite graph $H$.
It is plausible that $\log r(H)=\Theta(d(H)+\log n)$ for every
$d$-degenerate $n$-vertex graph $H$. If so, then this would imply
the above mentioned conjecture of Erd\H{o}s that $r(H) \leq
2^{c\sqrt{m}}$ for every graph $H$ with m edges and no isolated
vertices since every such graph satisfies $d(H)=O(\sqrt{m})$.

\item The exciting conjecture of Burr and Erd\H{o}s,
which was the driving force behind most of the research done on
Ramsey numbers for sparse graphs, is still open. Moreover, we even
do not know how to deal with the interesting special case of
bipartite graphs in which all vertices in one part have bounded
degree. However, the techniques in this paper can be used to make
modest progress and solve this special case when the bipartite graph
$H$ is bi-regular, i.e., every vertex in one part has degree
$\Delta_1$ and every vertex in the other part has degree $\Delta_2$.
The proof of the following theorem is a minor variation of the proof
of Theorem \ref{main} and therefore is omitted.

\begin{theorem}
Let $H=(V_1,V_2)$ be a bipartite graph without isolated vertices
such that, for $i \in \{1,2\}$, the number of vertices in $V_i$ is
$n_i$ and the maximum degree of a vertex in $V_i$ is $\Delta_i$.
Then $r(H) \leq 2^{c\Delta_1}\Delta_2n_2$ for some absolute constant
$c$.
\end{theorem}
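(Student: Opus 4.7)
Plan. The proof will follow the template of Theorem \ref{main} (via Theorem \ref{mainturan}), with a tuning of the parameters of dependent random choice to exploit the asymmetry between $\Delta_1$ and $\Delta_2$. First I reduce to the bipartite density setting: taking the majority color in a $2$-edge-coloring of $K_N$ and averaging over balanced bipartitions, it suffices to embed $H$ into any bipartite graph $G=(W_1,W_2;E)$ with $|W_1|=|W_2|=N/2$ and edge density at least $1/2$. As in Theorem \ref{mainturan}, I set up the auxiliary hypergraph $\mathcal{H}$ on vertex set $V_2$ whose edges are $\{N_H(u):u\in V_1\}$; this $\mathcal{H}$ has $n_2$ vertices, maximum degree at most $\Delta_2$, and edges of size at most $\Delta_1$. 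The plan is to embed $V_2\to W_2$ so that each edge of $\mathcal{H}$ maps to a $\Delta_1$-set with many common neighbors in $W_1$, and then to place $V_1$ greedily into $W_1$.

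The key modification is the choice of parameters in Lemma \ref{lem:dependent}: I take $\epsilon=1/2$, $d=a=\Delta_1$, but $t=\Delta_1+\lceil\log\Delta_2\rceil$ rather than $t=d$ as in Theorem \ref{mainturan}. With $x\approx N/2^{2\Delta_1+O(1)}$, a short calculation shows this $t$ is precisely what is needed to make the number of bad $\Delta_1$-subsets of the resulting set $A\subset W_2$ less than $(4\Delta_2)^{-\Delta_1}\binom{|A|}{\Delta_1}$, i.e., to fit the hypothesis of Lemma \ref{embeddinglemma}. I then apply Lemma \ref{embeddinglemma} to embed $\mathcal{H}$ into the down-closed hypergraph on $A$ whose $\Delta_1$-edges are the nice subsets (those with at least $x$ common neighbors in $W_1$), and extend each such embedding of $\mathcal{H}$ to an embedding of $H$ by placing $V_1$-vertices one at a time in $W_1$, using that $f(N_H(u))$ has at least $x$ common neighbors.

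The two parameter constraints are $|A|\geq 4n_2$ (for Lemma \ref{embeddinglemma}) and $x\geq 2n_1$ (for the greedy step), which with the above choice of $t$ become $N=\Omega(n_2\Delta_2\cdot 2^{\Delta_1})$ and $N=\Omega(n_1\cdot 2^{2\Delta_1})$ respectively. Since $H$ has no isolated vertices, $n_1\leq e(H)\leq n_2\Delta_2$, so the second constraint is at most $n_2\Delta_2\cdot 2^{2\Delta_1}$, subsuming the first, and I conclude $r(H)\leq 2^{c\Delta_1}\Delta_2 n_2$ for an absolute constant $c$. The main obstacle is precisely the choice of $t$: the standard $t=\Delta_1$ gives $(4\Delta_2)^{\Delta_1/t}=4\Delta_2$, which forces an extra $\Delta_2$ factor into the bound on $x$ and, after using $n_1\leq n_2\Delta_2$, yields a bound of $\Delta_2^2 n_2 2^{\Delta_1}$ rather than the desired $\Delta_2 n_2 2^{c\Delta_1}$. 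Taking $t=\Delta_1+\log\Delta_2$ absorbs this extra $\Delta_2$ into the exponent at the cost of shrinking $|A|$ by just one factor of $\Delta_2$, which is harmless since we only need $|A|\geq 4n_2$ rather than $|A|$ linear in $N$.
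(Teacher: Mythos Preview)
Your proposal is correct and is precisely the ``minor variation of the proof of Theorem~\ref{main}'' that the paper alludes to (the paper omits the proof). The crucial adjustment---taking $t=\Delta_1+\lceil\log\Delta_2\rceil$ in Lemma~\ref{lem:dependent} so that the bad-set count drops below $(4\Delta_2)^{-\Delta_1}\binom{|A|}{\Delta_1}$ as required by Lemma~\ref{embeddinglemma} with $h=\Delta_1$ and hypergraph degree $\Delta_2$---is exactly right, and your use of $n_1\leq e(H)\leq n_2\Delta_2$ (from the no-isolated-vertices hypothesis) to merge the two constraints into a single bound $N=O(2^{c\Delta_1}\Delta_2 n_2)$ is the intended final step.
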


Note that this theorem implies that if $H$ also satisfies
$\Delta_2n_2=2^{O(\Delta_1)}n$, then $r(H) =2^{O(\Delta_1)}n$, where
$n$ is the number of vertices of $H$. In particular, this bound is
valid for graphs whose average degree in each part is at least a
constant fraction of the maximum degree in that part.

Also, it is possible to extend ideas used in the proofs of Theorems \ref{degenerate}
and \ref{chromatic} to show that for every $0<\delta\leq 1$ the Ramsey number of any
$d$-degenerate graph $H$
with $n$ vertices and maximum degree $\Delta$ satisfies
$r(H) \leq 2^{c/\delta} \Delta^{\delta}n$, where $c$ is a constant depending
only on $d$. By taking $\delta=(\log n)^{-1/2}$ we have that
$r(H) \leq 2^{c(d)\sqrt{\log n}} n$ for every $d$-degenerate graph of order $n$.
This improves the result in \cite{KoSu}.

\item One should be able to extend the bound in Theorem \ref{bipartitedisjoint}
to work for all possible sizes of simple topological graphs.
Moreover, it might be true that every simple topological graph with
$m=\epsilon n^2$ edges with $\epsilon \geq 2/n$ contains two sets of
size $\delta n^2$ of pairwise disjoint edges with
$\delta=c\epsilon^2$ for some absolute constant $c>0$. This would
give both Theorem \ref{bipartitedisjoint} and, taking
$\epsilon=2/n$, a linear bound on the size of thrackles. For
comparison, our proof of Theorem \ref{bipartitedisjoint}
demonstrates that $\delta$ can be taken to be a polynomial in
$\epsilon$.

It would be also interesting to extend Conway's conjecture by
showing that for every fixed $k$, the number of edges in a simple
topological graph with $n$ vertices and no $k$ pairwise disjoint
edges is still linear in $n$. This is open even for $k=3$, though
(see Section \ref{topsubsection}) an almost linear upper bound was
given in \cite{PaTo}. For geometric graphs, such a linear bound was
a longstanding conjecture of Erd\H{o}s and Perles and was settled in
the affirmative by Pach and T\"orocsik.

\end{itemize}

\vspace{0.1cm}
\noindent
{\bf Acknowledgment.}\,
We'd like to thank Steve Butler
for carefully reading this manuscript.

\vspace{0.2cm}
\noindent
{\bf Note added in proof.}\, After this paper was written we learned that
D. Conlon proved the following variant of Corollary \ref{cormain},
independently and simultaneously with our work.
He showed that $r(H) \leq 2^{(2+o(1))\Delta} n$
for bipartite $n$-vertex graph $H$ with maximum degree $\Delta$.

\end{document}